\newenvironment{proof}[1][{}]{
  \begin{trivlist}\item[]\textit{Proof #1}\quad}%
  {\hfill\hspace*{\fill}~$\square$\end{trivlist}}
\newtheorem{mainthm}{Theorem} 
\newtheorem{thm}{Theorem}
\newtheorem{prop}[thm]{Proposition}
\newtheorem{lem}[thm]{Lemma}
\newtheorem{de}[thm]{Definition}
\newtheorem{remark}[thm]{Remark}
\theoremstyle{empty}
\newtheorem{dup}{Duplicate} 
\definecolor{turquoise}{cmyk}{0.65,0,0.1,0.1}
\definecolor{darkgreen}{cmyk}{0.87,0.5,0.81,0.66}
\newcommand{\rawdef}[1]{\emph{#1}} 
\newcommand{\defn}[1]{\rawdef{#1}\index{#1}}
\newcommand{\Appref}[1]{Appendix~\ref{#1}}
\newcommand{\Defref}[1]{Definition~\ref{#1}}
\newcommand{\Eqnref}[1]{Equation~\eqref{#1}}
\newcommand{\Lemref}[1]{Lemma~\ref{#1}}
\newcommand{\Secref}[1]{Section~\ref{#1}}
\newcommand{\Thmref}[1]{Theorem~\ref{#1}}
\newcommand{\Propref}[1]{Proposition~\ref{#1}}
\DeclareMathOperator{\Id}{Id}
\DeclareMathOperator*{\argmin}{argmin}
\newcommand{\cinfty}{C^\infty}
\DeclareMathOperator{\convh}{conv}
\newcommand{\convhull}[1]{\convh(#1)}
\newcommand{\R}{\mathbb{R}}
\newcommand{\reel}{\mathbb{R}}
\newcommand{\E}{\mathbb{E}}
\DeclareMathOperator{\Grad}{grad}
\newcommand{\grad}{\Grad}
\newcommand{\norm}[1]{\left|#1\right|}
\newcommand{\onorm}[1]{\left\|#1\right\|}
\newcommand{\abs}[1]{\left|#1\right|}
\newcommand{\transp}[1]{{#1}^\mathsf{T}}
\newcommand{\inv}[1]{{#1}^{-1}}
\newcommand{\invtransp}[1]{{#1}^{-T}}
\newcommand{\bcdot}{\boldsymbol{\cdot}}
\newcommand{\dotprod}[2]{#1\bcdot#2}
\newcommand{\pts}{\mathsf{P}}
\newcommand{\tanspace}[2]{T_{#1}{#2}} 
\newcommand{\bdry}[1]{\partial{#1}}
\newcommand{\asimplex}[1]{\{#1\}} 
\newcommand{\simplex}[1]{[#1]} 
\newcommand{\seg}[2]{\simplex{#1,#2}} 
\newcommand{\gdist}{d} 
\newcommand{\gdistG}[1]{\gdist_{#1}} 
\newcommand{\distG}[3]{\gdist_{#1}(#2,#3)}
\newcommand{\close}[1]{\overline{#1}} 
\DeclareMathOperator{\interior}{int}
\newcommand{\intr}[1]{\interior(#1)}
\newcommand{\spaceball}[3]{B_{#1}(#2;#3)} 
\newcommand{\cspaceball}[3]{\close{B}_{#1}(#2;#3)} 
\DeclareMathOperator{\aff}{aff} 
\newcommand{\affhull}[1]{\aff(#1)}
\newcommand{\splxs}{\sigma}
\newcommand{\splxt}{\tau}
\newcommand{\splxu}{\mu}
\newcommand{\tsplxs}{\tilde{\splxs}}
\newcommand{\opface}[2]{{#2}_{#1}} 
\newcommand{\splxsp}{\opface{p}{\splxs}}
\newcommand{\thickbnd}{t_0} 
\newcommand{\gthickness}{t} 
\newcommand{\thickness}[1]{\gthickness(#1)} 
\newcommand{\fatness}[1]{\Theta(#1)}
\DeclareMathOperator{\vol}{vol}
\newcommand{\volop}[1]{\vol(#1)}
\newcommand{\gsplxalt}{a}
\newcommand{\glongedge}{L}
\newcommand{\splxalt}[2]{\gsplxalt_{#1}(#2)} 
\newcommand{\longedge}[1]{\glongedge(#1)}
\newcommand{\scale}{h} 
\newcommand{\man}{M}
\DeclareMathOperator{\St}{St}
\newcommand{\str}[1]{\underline{\St}(#1)}
\newcommand{\carrier}[1]{\abs{#1}} 
\newcommand{\rem}{\reel^m}
\newcommand{\ren}{\reel^n}
\newcommand{\gdistEn}{\gdistG{\ren}} 
\newcommand{\distEn}[2]{\distG{\ren}{#1}{#2}}
\newcommand{\ballEn}[2]{\spaceball{\ren}{#1}{#2}} 
\newcommand{\cballEn}[2]{\cspaceball{\ren}{#1}{#2}} 
\newcommand{\genfct}[1]{\mathcal{E}_{#1}}
\newcommand{\genfctbc}{\genfct{\ggbarycoord}}
\newcommand{\enfct}[2]{\genfct{#1}(#2)}
\newcommand{\enfctbc}[1]{\genfctbc(#1)}
\newcommand{\gdistM}{\gdistG{\man}} 
\newcommand{\distM}[2]{\distG{\man}{#1}{#2}}
\newcommand{\gdistA}{\gdistG{\acplx}} 
\newcommand{\distA}[2]{\distG{\acplx}{#1}{#2}}
\newcommand{\ballM}[2]{\spaceball{\man}{#1}{#2}}
\newcommand{\cballM}[2]{\cspaceball{\man}{#1}{#2}}
\newcommand{\ginjrad}{\iota} 
\newcommand{\injrad}[1]{\ginjrad(#1)}
\newcommand{\injradM}{\iota_{\man}}
\newcommand{\curvupbnd}{\Lambda_{+}} 
\newcommand{\loccurvupbnd}[1]{\curvupbnd(#1)}
\newcommand{\curvlowbnd}{\Lambda_{-}} 
\newcommand{\curvabsbnd}{\Lambda}
\newcommand{\stdsplx}[1]{\boldsymbol{\Delta}^{#1}}
\newcommand{\stdsplxn}{\stdsplx{n}}
\newcommand{\riemsplxs}{\gsplxs_M}
\newcommand{\riemsplxt}{\gsplxt_M}
\newcommand{\riemsplxu}{\gsplxu_M}
\newcommand{\ggbarycoord}{\lambda} 
\newcommand{\gbarycoord}[1]{\ggbarycoord_{#1}} 
\newcommand{\barycoord}[2]{\gbarycoord{#1}(#2)}
\newcommand{\barymapradbnd}{\rho_0}
\newcommand{\bmaprad}{\rho}
\newcommand{\bmapball}{B_{\bmaprad}}
\newcommand{\ggbarymap}{\mathcal{B}}
\newcommand{\gbarymap}[1]{\ggbarymap_{#1}}
\newcommand{\gbarymapsig}{\ggbarymap_{\splxs}}
\newcommand{\conn}{\nabla} 
\newcommand{\sing}[2]{s_{#1}(#2)}
\newcommand{\metlipconst}{C_0}
\newcommand{\pseudoinv}[1]{{#1}^{-1}_{\text{left}}} 
\newcommand{\splxsE}{\gsplxs_{\E}}
\newcommand{\tsplxsE}{\gtsplxs_{\E}}
\newcommand{\splxtE}{\gsplxt_{\E}}
\newcommand{\bmap}{b}
\newcommand{\gglinmap}{\mathcal{L}}
\newcommand{\lmap}{\mathcal{L}}
\newcommand{\bfS}{\mathbf{S}} 
\newcommand{\cnstcurv}{\kappa}
\newcommand{\seccurv}{K}
\newcommand{\acplx}{\mathcal{A}}
\newcommand{\ccplx}{\mathcal{C}}
\newcommand{\gsplxs}{\boldsymbol{\sigma}}
\newcommand{\gsplxt}{\boldsymbol{\tau}}
\newcommand{\gsplxu}{\boldsymbol{\mu}}
\newcommand{\gtsplxs}{\boldsymbol{\tilde{\sigma}}}
\title{Riemannian simplices and triangulations
}
\author{Ramsay Dyer\thanks{Johann Bernoulli Institute,
        Rijksuniversiteit Groningen} \thanks{{\tt r.h.dyer@rug.nl}}
        \and
        Gert Vegter\footnotemark[1] \thanks{{\tt g.vegter@rug.nl}}
        \and
        Mathijs Wintraecken\footnotemark[1]
        \thanks{{\tt m.h.m.j.wintraecken@rug.nl}}}
\begin{document}

\pagenumbering{roman}
\maketitle

%

\begin{abstract}
  We study a natural intrinsic definition of geometric simplices in Riemannian manifolds of arbitrary dimension $n$, and exploit these simplices to obtain  criteria for triangulating compact Riemannian manifolds.  These geometric simplices are defined using Karcher means. Given a finite set of vertices in a convex set on the manifold, the point that minimises the weighted sum of squared distances to the vertices is the Karcher mean relative to the weights. Using barycentric coordinates as the weights, we obtain a smooth map from the standard Euclidean simplex to the manifold.  A Riemannian simplex is defined as the image of this barycentric coordinate map. In this work we articulate criteria that guarantee that the barycentric coordinate map is a smooth embedding. If it is not, we say the Riemannian simplex is degenerate.  Quality measures for the ``thickness'' or ``fatness'' of Euclidean simplices can be adapted to apply to these Riemannian simplices. For manifolds of dimension 2, the simplex is non-degenerate if it has a positive quality measure, as in the Euclidean case. However, when the dimension is greater than two, non-degeneracy can be guaranteed only when the quality exceeds a positive bound that depends on the size of the simplex and local bounds on the absolute values of the sectional curvatures of the manifold. An analysis of the geometry of non-degenerate Riemannian simplices leads to conditions which guarantee that a simplicial complex is homeomorphic to the manifold.
\end{abstract}

\paragraph{Keywords.} Karcher means, barycentric coordinates, triangulation, Riemannian manifold, sampling conditions, Riemannian simplices

\thispagestyle{empty}

\clearpage
\tableofcontents

\clearpage
\pagenumbering{arabic}

\section{Introduction}
\label{sec:intro}

In this work we study a natural definition of geometric simplices in Riemannian manifolds of arbitrary finite dimension. The definition is intrinsic; the simplex is defined by the positions of its vertices in the manifold, which need not be embedded in an ambient space.  The standard definition of a Euclidean simplex as the convex hull of its vertices is not useful for defining simplices in general Riemannian manifolds. Besides the problem that convex hulls are difficult to compute (almost nothing is known about the convex hull of three distinct points, for example \cite[\S 6.1.3]{Berger}), the resulting objects could not be used as building blocks for triangulations, i.e., they cannot be used to define geoemetric simplicial complexes. This is because if two full dimensional convex simplices share a boundary facet, that facet must itself be convex. This constrains the facet to lie on a totally geodesic submanifold (i.e., minimising geodesics between points on the facet must lie in the facet), and when the curvature is not constant such submanifolds cannot be expected to exist (see \cite[Thm 58]{Berger} or \cite[\S 11]{chen2000}).

Given the vertices, a geometric Euclidean simplex can also be defined as the domain on which the barycentric coordinate functions are non-negative. This definition \emph{does} extend to general Riemannian manifolds in a natural way. The construction is based on the fact that the barycentric coordinate functions can be defined by a ``centre of mass'' construction.  Suppose $\{v_0, \ldots, v_n\} \subset \ren$, and $\{\gbarycoord{i}\}_{0 \leq i \leq n}$ is a set of non-negative weights that sum to $1$. If $u$ is the point that minimises the function
\begin{equation}
  \label{eq:en.fct.eucl}
  y \mapsto \sum_{i=0}^n \gbarycoord{i}\distEn{y}{v_i}^2,
\end{equation}
where $\distEn{x}{y} = \norm{x-y}$ is the Euclidean distance, then $u = \sum \gbarycoord{i} v_i$, and the $\{\gbarycoord{i}\}$ are the barycentric coordinates of $u$ in the simplex $\simplex{v_0, \ldots, v_n}$.

We can view a given set of barycentric coordinates $\ggbarycoord = (\gbarycoord{0},\ldots,\gbarycoord{n})$ as a point in $\R^{n+1}$. The set $\stdsplxn$ of all points in $\R^{n+1}$ with non-negative coefficients that sum to $1$ is called the \defn{standard Euclidean $n$-simplex}. Thus the minimisation of the function~\eqref{eq:en.fct.eucl} defines a map from the standard Euclidean simplex to the Euclidean simplex $\simplex{v_0,\ldots,v_n} \subset \ren$

If instead the points $\{v_i\}$ lie in a convex set $W$ in a Riemannian manifold $\man$, then, by using the metric of the manifold instead of $\gdistEn$ in \Eqnref{eq:en.fct.eucl}, we obtain a function $\genfctbc: W \to \R$ that has a unique minimum $x \in W$, provided $W$ is sufficently small (See \Secref{sec:riem.com}).  In this way we obtain a mapping $\ggbarycoord \mapsto x$ from $\stdsplxn$ to $W$. We call the image of this map an \defn{intrinsic simplex}, or a \defn{Riemannian simplex}.


\subsection{Previous work}

\Eqnref{eq:en.fct.eucl} defines a point with given barycentric coordinates as a weighted centre of mass.  Centres of mass were apparently introduced in this context in 1929 by Cartan \cite{Cartan} for a finite number of points in a symmetric setting \cite[\S 6.1]{Berger}. Fr{\'e}chet also studied such functions in a more general setting, with integrals instead of sums, in 1948 \cite{Frechet}.  However, Karcher~\cite{Karcher} gave an extensive treatment particular to the Riemannian setting, and averages defined in this way are often referred to as ``Karcher means''.

Karcher's exposition~\cite{Karcher} is the standard reference for Karcher means. However, for our purposes a particularly good resource is the work by Buser and Karcher~\cite[\S 6, \S 8]{buser1981}. This work was exploited by Peters~\cite{peters1984}, where Karcher means are used to interpolate between locally defined diffeomorphisms between manifolds in order to construct a global diffeomorphism in a proof of Cheeger's finiteness theorem. Chavel~\cite[Ch. IX]{Chavel} gives a detailed exposition of Peters's argument. Kendal~\cite{Kendall} provides another important reference for Karcher means.  Riemannian simplices are not explicitly considered in any of these works.

More recently, Rustamov~\cite{Rustamov} introduced barycentric coordinates on a surface via Karcher means. Sander~\cite{Sander1} used the method in arbitrary dimensions to define Riemannian simplices as described above. He called them \defn{geodesic finite elements}, reflecting the application setting in numerical solutions to partial differential equations involving functions which take values in a manifold. Independently, von Deylen~\cite{vonDeylen2014} has treated the question of degeneracy of Riemannian simplices. His work includes a detailed analysis of the geometry of the barycentric coordinate map, and several applications. He does not address the problem of sampling criteria for triangulation. 

Our work is motivated by a desire to develop sampling requirements for representing a compact smooth Riemannian manifold with a simplicial complex. By this we mean that we seek conditions on a finite set $\pts \subset \man$ that guarantee that $\pts$ can be the vertex set of an (abstract) simplicial complex that is homeomorphic to $\man$.  We are particularly interested in manifolds of dimension greater than 2. For 2-dimensional manifolds a triangulation is guaranteed to exist when $\pts$ meets density requirements that can be specified either in terms of extrinsic criteria, for surfaces embedded in Euclidean space~\cite{boissonnat2005gm}, or in terms of intrinsic criteria~\cite{leibon1999,dyer2008sgp}.  In higher dimensions, although it is well known that a smooth manifold admits a 
triangulation, to the best of our knowledge well founded sampling conditions sufficient to guarantee the existence of a triangulation with a given sample points as vertices have yet to be described.

For arbitrary finite dimension, Cairns~\cite{cairns1934} first demonstrated that a smooth compact manifold admits a triangulation by embedding Euclidean complexes into the manifold via coordinate charts, and showing that if the complexes were sufficiently refined the embedding maps could be perturbed so that they remain embeddings and the images of simplices coincide where patches overlap, thus constructing a global embedding of a complex. Whitehead~\cite{whitehead1940} refined the technique into a general approximation theory which is described in detail by Munkres~\cite{munkres1968} and is not restricted to compact manifolds. Whitney~\cite{whitney1957} used his result that a manifold can be embedded into Euclidean space to triangulate the manifold by intersecting it with a fine Cartesian grid in the ambient space. The problem has been revisited more recently in the computational geometry community, where the focus is on the algorithm used to construct a triangulation when a compact submanifold is known only through a finite set of sample points. Cheng et al.~\cite{cheng2005} used the generic triangulation result of Edelsbrunner and Shah~\cite{edelsbrunner1997rdt} to argue that a weighted Delaunay complex will triangulate a manifold, and Boissonnat and Ghosh~\cite{boissonnat2014tancplx.dcg} adapted Whitney's argument to demonstrate a triangulation by a Delaunay-based complex whose computation does not involve the ambient dimension.

In every case a metric on the manifold was employed in the computation of the triangulation. However, for all of these results there is no explicit expression to describe the sampling density sufficient to guarantee a triangulation. There is only the assurance that if the maximum distance between adjacent vertices is small enough, a triangulation may be obtained. In fact, in all of these results, the required density depends not only on the geometric properties of the manifold, but also on the geometric properties of the simplices that are involved in the construction. Some measure of simplex quality is introduced, and a lower bound on this quality measure is an essential component of the construction. This dependence on simplex quality is also present in our results, but we actually quantify what bound on the edge lengths is small enough to ensure a triangulation, given a bound on the quality of the simplices.

\subsection{Overview}

When we speak about the quality of a Euclidean simplex, we are referring to a function that parameterises how close the simplex is to being degenerate. A common quality measure for an $n$-simplex is the ratio of the volume to the $n^{\text{th}}$ power of the longest edge length. Another useful quality measure is the ratio of the smallest altitude to the longest edge length. A Euclidean simplex is degenerate if and only if its quality measure vanishes.

In this work we shed light on the relationship between the local curvature in the manifold, and the size and quality of the simplices involved in a triangulation. We articulate explicit criteria that are sufficient to guarantee that a simplicial complex with vertices on the manifold is homeomorphic to the manifold.  The intrinsic simplices defined by the centre of mass construction provide a convenient tool for this purpose.

Although the idea of Riemannian simplices defined in this way has been in the mathematical community for some time (see Berger~\cite[6.1.5]{Berger}), we are not aware of any published work exploiting the notion (of simplices in particular) prior to that of Rustamov~\cite{Rustamov} and Sander~\cite{Sander1}. For our purposes we need to establish a property that Sander did not consider. We need to ensure that the map from the Euclidean simplex to the manifold is a smooth embedding (i.e., the map extends to a smooth map from an open neighbourhood of the Euclidean simplex). This ensures that the barycentric coordinates mapped to the manifold do in fact provide a local system of coordinates. If the map is not a smooth embedding, we call the Riemannian simplex \defn{degenerate}.

A Euclidean simplex is non-degenerate if and only if its vertices are affinely independent. We show that a Riemannian simplex is non-degenerate if and only if for every point in the simplex the vertices are affinely independent when they are lifted by the inverse of the exponential map to the tangent space of that point.

In a two dimensional manifold this condition is satisfied for a triangle as long as the vertices do not lie on a common geodesic. Similar to the Euclidean case, such a configuration can be avoided by applying an arbitrarily small perturbation to the vertices. However, when the dimension is greater than two, a non-trivial constraint on simplex quality is required. In dimension 2 a sampling density for triangulation can be specified in terms of the convexity radius \cite{leibon1999,dyer2008sgp} (maximal radius for which a geodesic ball is convex, see \Secref{sec:riem.com}), and depends only on an upper bound on the sectional curvatures (\Lemref{lem:convexity.bnd}). In dimension higher than 2, we require the simplex size (maximum edge length) to also be constrained by a lower bound on the sectional curvatures (the upper bound on the edge lengths is inversely proportional to the square root of an upper bound on the absolute value of the sectional curvatures), so we cannot express the sampling density requirements in terms of a convexity radius alone.

We may define a quality measure for a Riemannian simplex by considering the quality of the Euclidean simplex obtained by lifting the vertices to the tangent space at one of the vertices. For our purposes we require a lower bound on the smallest such quality measure when each of the vertices is considered.

The quality of the Riemannian simplex that is required to ensure that it is non-degenerate depends on the maximum edge length, as well as the magnitude of the sectional curvatures in the neighbourhood. We establish this relationship with the aid of the Rauch comparison theorem, which provides an estimate on the differences in edge lengths of Euclidean simplices obtained by lifting the vertices of the Riemannian simplex to different tangent spaces.  By exploiting previously established bounds on the degradation of the quality of a Euclidean simplex under perturbations of the edge lengths \cite{boissonnat2013manmesh.inria}, we establish conditions that guarantee that the Riemannian simplex is non-degenerate.

We use this result to establish conditions that guarantee that a simplicial complex is homeomorphic to the manifold. This is the primary motivation for this work. Given an abstract simplicial complex whose vertex set is identified with points on the manifold, we are ensured that it triangulates the manifold if certain conditions are met, the principle one being a relationship between the size and quality of the Riemannian simplices.

\subsection{Outline and main results}

In \Secref{sec:riem.splxs} we present the framework for centre of mass constructions, and introduce the barycentric coordinate map and Riemannian simplices. Riemannian simplices are defined (\Defref{def:riem.splx}) as the image of the barycentric coordinate map, so they are ``filled in'' geometric simplices. A Riemannian simplex $\riemsplxs$ is defined by its vertices $\splxs = \asimplex{p_0, \ldots, p_n} \subset \man$, which are constrained to lie in a convex neighbourhood $\bmapball \subset \man$. For any $x \in \bmapball$ we define a Euclidean simplex $\splxs(x) \subset \tanspace{x}{\man}$ by $\splxs(x) = \asimplex{v_0(x), \ldots, v_n(x)}$, where $v_i(x) = \exp_x^{-1}(p_i)$. In general we use a boldface symbol when we are referring to a simplex as a set of non-negative barycentric coordinates, and normal type refers to the finite vertex set; the convex hull of $\splxs(x)$ is $\splxsE(x)$.

In \Secref{sec:implicit.fct.affine} we give a characterisation of non-degenerate Riemannian simplices in terms of affine independence.  We show that $\riemsplxs$ is non-degenerate if and only if $\splxs(x)$ is non-degenerate for every $x \in \riemsplxs$.

In \Secref{sec:non.degen} we establish criteria to ensure that a Riemannian simplex is non-degenerate. We first review properties of Euclidean simplices, including \defn{thickness}, the quality measure we employ. The thickness is essentially the ratio of the smallest altitude to the longest edge length of the simplex.  If the edge lengths in a Euclidean simplex change by a small amount, we can quantify the change in the thickness. In particular, if $F: \ren \to \ren$ is a bi-Lipschitz map, we can quantify a bound on the thickness, $\thickness{\splxs}$, of a simplex $\splxs$ relative to the metric distortion (i.e., the bi-Lipschitz constant) that establishes when the Euclidean simplex $F(\splxs)$ is non-degenerate.

The Rauch theorem establishes bounds on the norm of the differential of the exponential map, relative to the sectional curvatures. Using this we obtain a bound on the metric distortion of the transition function
\begin{equation}
  \label{eq:exp.transition}
  \exp_x \circ \exp_p^{-1}: T_p\man \to T_x\man
\end{equation}
which maps $\splxs(p)$ to $\splxs(x)$, and so we are able to establish conditions ensuring that $\splxs(x)$ is non-degenerate, based on quality assumptions on $\splxs(p)$. 

An open \defn{geodesic ball} of radius $r$ centred at $x \in \man$ is the set $\ballM{x}{r}$ of all points in $\man$ whose geodesic distance from $x$ is less than $r$. The \defn{injectivity radius at $x$}, denoted $\injrad{x}$, is the supremum of the radii $r$ for which $\exp_x$ restricts to a diffeomorphism between the Euclidean ball of radius $r$ centred at $0\in T_x\man$, and $\ballM{x}{r}$.  The injectivity radius of $\man$ is the infimum of $\injrad{x}$ over all $x\in \man$, and is denoted $\injradM$.
\begin{dup}[\Thmref{thm:thick.nondegen.riem}~(Non-degeneracy criteria)]
  Suppose $\man$ is a Riemannian manifold with sectional curvatures $\seccurv$ bounded by $\abs{\seccurv} \leq \curvabsbnd$, and $\riemsplxs$ is a Riemannian simplex, with $\riemsplxs \subset \bmapball \subset \man$, where $B_{\rho}$ is an open geodesic ball of radius $\rho$ with
  \begin{equation}
    \label{eq:good.barymap.rad}
    \rho < \barymapradbnd = \min \left \{ \frac{\injradM}{2},
      \frac{\pi}{4\sqrt{\curvabsbnd}} \right \}.
  \end{equation}
  Then $\riemsplxs$ is non-degenerate if there is a point $p \in \bmapball$ such that the lifted Euclidean simplex $\splxs(p)$ has thickness satisfying
\begin{equation}
  \label{eq:thickness.vs.length}
   \thickness{\splxs(p)} > 10 \sqrt{\curvabsbnd}\longedge{\riemsplxs},
 \end{equation}
 where $\longedge{\riemsplxs}$ is the geodesic length of the longest edge in $\riemsplxs$.
\end{dup}

In \Secref{sec:triangulation} we develop our sampling criteria for triangulating manifolds. We establish properties of maps whose differentials are bounded close to a fixed linear isometry, and use these properties to reveal conditions under which a complex will be embedded into a manifold. We then exploit a refinement of the Rauch theorem, and other estimates established by Buser and Karcher~\cite{buser1981}, to bound the differential of the barycentric coordinate map in this way. 

If $p$ is a vertex in an abstract simplicial complex $\acplx$, we define the \defn{star of $p$} to be the subcomplex $\str{p}$ of $\acplx$ consisting of all simplices that contain $p$, together with the faces of these simplices. The underlying topological space (or \defn{carrier}) of a complex $\acplx$ is denoted $\carrier{\acplx}$. We say that $\str{p}$ is a \defn{full star} if $\carrier{\str{p}}$ is a closed topological ball of dimension $n$ with $p$ in its interior, and $\acplx$ contains no simplices of dimension greater than $n$.  We have:
\begin{dup}[ \Thmref{thm:triangulation}~(Triangulation criteria)]
  Suppose $\man$ is a compact $n$-dimensional Riemannian manifold with sectional curvatures $\seccurv$ bounded by $\abs{\seccurv} \leq \curvabsbnd$, and $\acplx$ is an abstract simplicial complex with finite vertex set $\pts \subset \man$.  
  Define a quality parameter $\thickbnd > 0$, and let
  \begin{align}
    \label{eq:sampling.density}
    \scale = \min \left\{ \frac{\injradM}{4},
      \frac{\sqrt{n}\thickbnd}{6 \sqrt{\curvabsbnd}} \right\}.
  \end{align}
  If
  \begin{enumerate}
  \item For every $p \in \pts$, the vertices of $\str{p}$ are contained in $\ballM{p}{h}$, and the balls $\{\ballM{p}{h}\}_{p \in \pts}$ cover $\man$.
  \item For every $p \in \pts$, the restriction of the inverse of the exponential map $\exp_p^{-1}$ to the vertices of $\str{p} \subset \acplx$ defines a piecewise linear embedding of $\carrier{\str{p}}$ into $\tanspace{p}{\man}$, realising $\str{p}$ as a full star such that every simplex $\splxs(p)$ has thickness $\thickness{\splxs(p)} \geq \thickbnd$.
  \end{enumerate}
  then $\acplx$ triangulates $\man$, and the triangulation is given by the barycentric coordinate map on each simplex.
\end{dup}

The techniques employed to obtain \Thmref{thm:triangulation} exploit stronger bounds on the differential of the exponential map, and provide a slightly better bound for non-degeneracy than the one stated in \Thmref{thm:thick.nondegen.riem}, but at the expense of a stronger constraint on the allowed diameter of the simplex. This is the reason \Eqnref{eq:thickness.vs.length} appears as a stronger constraint on the thickness than the curvature controlled part of \Eqnref{eq:sampling.density}.

We refer to the criteria of \Thmref{thm:triangulation} as sampling criteria, even though they require a simplicial complex for their definition. Although there is no explicit constraint on the minimal distance between points of $\pts$, one is implicitly imposed by the quality constraint on the Riemannian simplices. The required sampling density depends on the quality of the Riemannian simplices, which leaves open the question of what kind of quality of simplices can we hope to attain. Recent work~\cite{boissonnat2013manmesh.inria} constructs a Delaunay complex conforming to the requirements of \Thmref{thm:triangulation} with the thickness $\thickbnd$ bounded by $\Omega(2^{-n^3})$. It would be interesting to see this improved.

The complex $\acplx$ in \Thmref{thm:triangulation} naturally admits a piecewise linear metric by assigning edge lengths to the simplices given by the geodesic distance in $\man$ between the endpoints. In \Secref{sec:pwf.metric} we observe that in order to ensure that this does in fact define a piecewise-flat metric, we need to employ slightly stronger constraints on the scale parameter $h$. In this case, the complex $\acplx$ becomes a good geometric approximation of the original manifold, and we find:
\begin{dup}[\Thmref{thm:metric.distortion} (Metric distortion)]
  If the requirements of \Thmref{thm:triangulation}, are satisfied with the scale parameter~\eqref{eq:sampling.density} replaced by
  \begin{equation*}
    \scale = \min \left\{ \frac{\injradM}{4},
      \frac{\thickbnd}{6 \sqrt{\curvabsbnd}} \right\},
  \end{equation*}
  then $\acplx$ is naturally equipped with a piecewise flat metric $\gdistA$ defined by assigning to each edge the geodesic distance in $\man$ between its endpoints.

  If $H:\carrier{\acplx} \to \man$ is the triangulation defined by the barycentric coordinate map in this case, then the metric distortion induced by $H$ is quantified as
  \begin{equation*}
    \abs{ \distM{H(x)}{H(y)} - \distA{x}{y} } \leq \frac{50 \curvabsbnd \scale^2}{\thickbnd^2} \distA{x}{y},
  \end{equation*}
for all $x,y \in \carrier{\acplx}$.
\end{dup}

The criteria of these three theorems can also be formulated in terms of the thickness of the Euclidean simplices defined by the geodesic edge lengths of the Riemannian simplices, rather than the Euclidean simplices we find in the tangent spaces.  In \Appref{sec:alt.criteria} we briefly mention this alternative formulation of our results. We also compare the thickness quality measure for simplices with a commonly used volumetric quality measure which we call fatness.

In \Appref{sec:toponogov} an alternate approach to non-degenerate Riemannian simplices is presented. This approach is based on bounding angles and edge lengths in geodesic triangles via the Toponogov comparison theorem.


\section{Riemannian simplices}
\label{sec:riem.splxs}

In this section we summarise the results of the theory of Riemannian centres of mass that we need in order to define Riemannian simplices. We then give an explicit description of the barycentric coordinate map that is used to define these simplices. We take the view that if the barycentric coordinate map is well defined, then the simplex is well defined, but it may be degenerate. The geodesic finite elements employed by Sander~\cite{Sander1} are Riemannian simplices without a requirement of non-degeneracy. In \Secref{sec:implicit.fct.affine} we demonstrate that non-degeneracy of a Riemannian simplex $\riemsplxs$ is characterised by the affine independence of the vertices when lifted to the tangent space of any point in $\riemsplxs$.

\subsection{Riemannian centre of mass}
\label{sec:riem.com}

We work with an $n$-dimensional Riemannian manifold $\man$.  The centre of mass construction developed by Karcher~\cite{Karcher} hinges on the notion of convexity in a Riemannian manifold. A set $B \subseteq \man$ is \defn{convex} if any two points $x,y \in B$ are connected by a minimising geodesic $\gamma_{xy}$ that is unique in $\man$, and contained in $B$. For $c \in \man$, the geodesic ball of radius $r$ is the set $\ballM{c}{r}$ of points in $\man$ whose distance from $c$ is less than $r$, and we denote its closure by $\cballM{c}{r}$. If $r$ is small enough, $\cballM{c}{r}$ will be convex; the following lemma quantifies ``small enough''. 

In order to obtain non-degeneracy criteria for Riemannian simplices we require both an upper and a lower bound on the sectional curvatures, so it is convenient to work with a bound $\curvabsbnd$ on the absolute value of the sectional curvatures, $\abs{\seccurv} \leq \curvabsbnd$. However, the \emph{definition} of Riemannian simplices only requires an upper bound on the sectional curvatures. In order to emphasise this we introduce distinct symbols for the upper and lower bounds on the sectional curvatures. Thus $\curvlowbnd \leq \seccurv \leq \curvupbnd$, and $\curvabsbnd = \max \{\curvupbnd, - \curvlowbnd \}$.

We have \cite[Thm. IX.6.1]{Chavel}: 
%
\begin{lem}
  \label{lem:convexity.bnd}
  Suppose the sectional curvatures of $\man$ are bounded by $\seccurv \leq \curvupbnd$, and $\injradM$ is the injectivity radius. If
  \begin{equation*}
    r < \min \left \{ \frac{\injradM}{2},
      \frac{\pi}{2\sqrt{\curvupbnd}} \right \},
  \end{equation*}
  then $\cballM{x}{r}$ is convex. (If $\curvupbnd \leq 0$, we take $1 / \sqrt{\curvupbnd}$ to be infinite.)
\end{lem}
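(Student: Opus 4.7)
The plan is to establish both defining properties of convexity for $\cballM{x}{r}$: that every pair of points in the ball is joined by a unique minimising geodesic in $\man$, and that this geodesic is contained in $\cballM{x}{r}$. I would treat these two properties in turn.

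For the first, take $y_1, y_2 \in \cballM{x}{r}$. The triangle inequality gives $\distM{y_1}{y_2} \leq 2r < \injradM$, so $y_2$ lies inside the injectivity ball about $y_1$. Hence there is a unique geodesic in $\man$ of length $\distM{y_1}{y_2}$ joining them, and no other geodesic from $y_1$ to $y_2$ can be minimising.

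For containment, I would study the squared-distance function $f(t) = \tfrac{1}{2}\distM{\gamma(t)}{x}^2$, where $\gamma:[0,1]\to \man$ is the unique minimising geodesic from $y_1$ to $y_2$ provided by the previous step. Because $r < \pi/(2\sqrt{\curvupbnd})$ and $r < \injradM/2$, the larger ball $\ballM{x}{\pi/(2\sqrt{\curvupbnd})}$ lies inside a normal neighbourhood of $x$, so $f$ is smooth wherever $\gamma(t)$ remains in it. The Hessian comparison theorem, applied with the upper curvature bound $\curvupbnd$ and compared against the model space of constant curvature $\curvupbnd$, then gives $\Hess(\tfrac{1}{2}\distM{\cdot}{x}^2) > 0$ throughout $\ballM{x}{\pi/(2\sqrt{\curvupbnd})}$. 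Restricting to $\gamma$, this yields $f''(t) > 0$ wherever $\gamma(t)$ lies in that larger ball. Arguing by contradiction, if $\gamma$ were to leave $\cballM{x}{r}$, then since $f(0), f(1) \leq r^2/2$ the function $f$ would attain an interior maximum at some $t_0 \in (0,1)$ with $f(t_0) > r^2/2$. Taking $t_0$ minimal and using $r < \pi/(2\sqrt{\curvupbnd})$, the point $\gamma(t_0)$ still lies in $\ballM{x}{\pi/(2\sqrt{\curvupbnd})}$, so $f$ is smooth there, but an interior maximum with $f''(t_0) > 0$ is impossible.

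The main obstacle is the Hessian comparison step: extracting strict positivity of $\Hess(\tfrac{1}{2}\distM{\cdot}{x}^2)$ throughout the ball of radius $\pi/(2\sqrt{\curvupbnd})$. This is where both hypotheses bite. The upper bound $\curvupbnd$ lets one compare Jacobi fields along radial geodesics from $x$ with those in the model space of constant curvature $\curvupbnd$; in that model the squared distance from a point is convex exactly up to radius $\pi/(2\sqrt{\curvupbnd})$, and Rauch comparison transfers the strict convexity to $\man$. The injectivity bound $r < \injradM/2$ is what keeps $\gamma$ clear of cut points so that the smooth squared-distance function along $\gamma$ is well defined and its second derivative can be read off from the Hessian.
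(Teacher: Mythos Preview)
The paper does not prove this lemma; it is quoted directly from Chavel~\cite[Thm.~IX.6.1]{Chavel}. Your overall strategy---uniqueness from the injectivity bound, containment from strict convexity of $\tfrac12\distM{\cdot}{x}^2$ via Hessian comparison---is the standard one, and the uniqueness step is fine.

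The containment argument has a real gap. You assert that ``taking $t_0$ minimal'' forces $\gamma(t_0)\in\ballM{x}{\pi/(2\sqrt{\curvupbnd})}$, but nothing about minimality of $t_0$ among interior maxima controls the value $\distM{\gamma(t_0)}{x}$; the maximum of $f$ could perfectly well occur where $\distM{\gamma(t_0)}{x}\geq \pi/(2\sqrt{\curvupbnd})$, and then the Hessian comparison no longer yields $f''(t_0)>0$. You also claim that $\ballM{x}{\pi/(2\sqrt{\curvupbnd})}$ lies inside a normal neighbourhood of $x$; the hypotheses do not give this, since they only say $2r<\injradM$, not $\pi/(2\sqrt{\curvupbnd})<\injradM$.

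Both issues are partly repaired by an estimate you omit: since $\gamma$ is minimising with length $\leq 2r$ and each endpoint is within $r$ of $x$, the triangle inequality gives $\distM{\gamma(t)}{x}\leq 2r$ for every $t$. This keeps $\gamma$ inside $\ballM{x}{\injradM}$, so $f$ is smooth along $\gamma$, and it also keeps $\gamma$ inside $\ballM{x}{\pi/(2\sqrt{\curvupbnd})}$ \emph{provided} $2r<\pi/(2\sqrt{\curvupbnd})$. With that extra factor of two your maximum-principle argument goes through cleanly---but it then proves the lemma only for $r<\min\{\injradM/2,\,\pi/(4\sqrt{\curvupbnd})\}$. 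Recovering the sharp constant $\pi/(2\sqrt{\curvupbnd})$ requires more care (this is where Chavel's argument does additional work), because for $r\geq\pi/(4\sqrt{\curvupbnd})$ you have not excluded the possibility that $\gamma$ drifts out of the region where Hessian comparison guarantees $f''>0$.
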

\begin{remark}
  \Lemref{lem:convexity.bnd} is stated in terms of global bounds on the injectivity radii and sectional curvatures (on a non-compact manifold, these may be useless), but really we only need these bounds in a neighbourhood of $x$. Let $K(x)$ be an upper bound on the sectional curvatures at $x$, and denote the injectivity radius at $x$ by $\injrad{x}$. Now define $I(x)$ and $\loccurvupbnd{x}$ to be the infimum and supremum respectively of $\injrad{y}$ and $K(y)$, where $y$ ranges over the ball $\ballM{x}{R}$ of radius
  \begin{equation*}
    R = \min \left\{ \frac{\injrad{x}}{2}, \frac{\pi}{2\sqrt{K(x)}} \right\}.
  \end{equation*}
  Then \Lemref{lem:convexity.bnd} holds if $\injradM$ and $\curvupbnd$ are replaced by $I(x)$ and $\loccurvupbnd{x}$ respectively in the bound on $r$. For simplicity, we will continue to refer to global bounds, but everywhere they occur a similar remark applies.

  Also, in all cases where an upper bound on the sectional curvatures is employed, this bound is only relevant when it is positive. If $\man$ has non-positive curvature, then $1 / \sqrt{\curvupbnd}$ may be assumed to be infinite.
\end{remark}

In our context, we are interested in finding a weighted centre of mass of a finite set $\asimplex{p_0, \ldots , p_j} \subset B \subset M$, where the containing set $B$ is open, and its closure $\close{B}$ is convex. The centre of mass construction is based on minimising the function $\genfctbc: \close{B} \to \reel$ defined by
\begin{equation}
  \label{eq:en.fct.defn}
  \enfctbc{x} = \frac{1}{2} \sum_i \gbarycoord{i} \distM{x}{p_i}^2,
\end{equation}
where the $\gbarycoord{i} \geq 0$ are non-negative weights that sum to $1$, and $\gdistM$ is the geodesic distance function on $\man$. Karcher's first simple observation is that the minima of $\genfctbc$ must lie in the interior of $\close{B}$, i.e., in $B$ itself.  This follows from considering the gradient of $\genfctbc$:
\begin{equation}
  \label{eq:en.fct.grad}
  \grad \enfctbc{x} = - \sum_i \gbarycoord{i} \exp^{-1}_x(p_i).
\end{equation}
At any point $x$ on the boundary of $\close{B}$, the gradient vector lies in a cone of outward pointing vectors. It follows that the minima of $\genfctbc$ lie in $B$. The more difficult result that the minimum is unique, Karcher showed by demonstrating that $\genfctbc$ is convex.  If $B \subseteq \man$ is a convex set, a function $f: B \to \reel$ is \defn{convex} if for any geodesic $\gamma: I \to B$, the function $f \circ \gamma$ is convex (here $I \subseteq \R$ is an open interval). If $f$ has a minimum in $B$, it must be unique. By \Eqnref{eq:en.fct.grad}, it is the point $x$ where
\begin{equation*}
  \sum_i \gbarycoord{i} \exp^{-1}_x(p_i) = 0.  
\end{equation*}

We have the following result \cite[Thm 1.2]{Karcher}:
\begin{lem}[Unique centre of mass]
  \label{RiemannianCentre}
  \label{lem:enfct.convex}
  If $\asimplex{p_0, \ldots, p_j} \subset B_{\rho} \subset \man$, and $B_{\rho}$ is an open ball of radius $\rho$ with
  \begin{equation*}
    \rho < \barymapradbnd = \min \left \{ \frac{\injradM}{2},
      \frac{\pi}{4\sqrt{\curvupbnd}} \right \},
  \end{equation*}
  then on any geodesic $\gamma: I \to B_{\rho}$, we have
  \begin{equation}
    \label{eq:en.fct.strictly.convex}
    \frac{\mathrm{d}^2}{\mathrm{d}t^2}\enfctbc{\gamma(t)} \geq C(\curvupbnd, \rho) > 0,
  \end{equation}
  where $C(\curvupbnd, \rho)$ is a positive constant depending only on $\curvupbnd$ and $\rho$. In particular, $\genfctbc$ is convex and has a unique minimum in $B_{\rho}$.
\end{lem}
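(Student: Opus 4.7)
The plan is to prove the lemma by establishing the quadratic convexity estimate~\eqref{eq:en.fct.strictly.convex} directly from a Hessian comparison bound on each individual squared-distance function, and then to combine this strict convexity with the gradient argument sketched above to deduce existence and uniqueness of the minimum.

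First I would reduce to a single point: write $\enfctbc{x} = \sum_i \gbarycoord{i} f_i(x)$ where $f_i(x) = \tfrac{1}{2}\distM{x}{p_i}^2$, and observe that since the $\gbarycoord{i}$ are non-negative and sum to $1$, it suffices to bound $(f_i\circ\gamma)''(t)$ below by a positive constant independent of $i$ for every unit-speed geodesic $\gamma \colon I \to \bmapball$. Note that $\bmapball$ is contained in an open ball of radius $\ginjrad_M/2$ on which $\exp_x^{-1}$ is well-defined for every $x \in \bmapball$, and that any two points $x,p_i \in \bmapball$ satisfy $\distM{x}{p_i} < 2\rho < \pi/(2\sqrt{\curvupbnd})$; this is the key range in which squared-distance functions behave well.

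The core step is a Hessian comparison estimate: for $x \in \bmapball$ and $p_i \in \bmapball$, setting $r = \distM{x}{p_i}$, one has for every unit vector $v \in \tanspace{x}{\man}$
\begin{equation*}
  \Hess(f_i)_x(v,v) \;\geq\; \sqrt{\curvupbnd}\, r \cot\!\bigl(\sqrt{\curvupbnd}\, r\bigr),
\end{equation*}
with the right-hand side interpreted in the usual way when $\curvupbnd \leq 0$ (and the factor replaced by $1$ in the radial direction). This follows from Rauch II applied to the Jacobi fields along the minimising geodesic from $x$ to $p_i$, exactly as in Karcher~\cite{Karcher} or Buser--Karcher~\cite{buser1981}. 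Because $r < 2\rho < \pi/(2\sqrt{\curvupbnd})$, the argument of the cotangent lies strictly in $(0,\pi/2)$, so the right-hand side is bounded below by a positive constant $C(\curvupbnd,\rho)$ depending only on $\curvupbnd$ and $\rho$. Taking $v = \gamma'(t)$ and summing against the weights $\gbarycoord{i}$ yields the desired inequality~\eqref{eq:en.fct.strictly.convex}.

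The remainder is routine: strict convexity of $\enfctbc$ along every geodesic in $\bmapball$ (and hence on $\close{\bmapball}$ by continuity) forces the set of minimisers on the compact convex set $\close{\bmapball}$ to be a single point, since two distinct minimisers would be joined by a minimising geodesic inside $\close{\bmapball}$ along which the strictly convex function $\genfctbc$ would have to be constant. The gradient computation recalled above, $\grad \enfctbc{x} = -\sum_i \gbarycoord{i}\exp_x^{-1}(p_i)$, shows that at every point of $\bdry{\close{\bmapball}}$ this gradient points strictly inward, so the unique minimum must in fact lie in the open ball $\bmapball$. The main technical obstacle is the Hessian comparison step, which requires some care in the direction tangent to the geodesic from $x$ to $p_i$ (where the lower bound $1$ is used) versus the transverse directions (where the $\cot$ bound is used); once this is in hand, the rest of the argument is straightforward.
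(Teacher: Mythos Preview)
The paper does not give its own proof of this lemma: it is stated as a direct citation of Karcher~\cite[Thm 1.2]{Karcher}, with only the surrounding paragraphs sketching why the minimum lies in the interior (the gradient argument) and attributing the convexity to Karcher. Your proposal is therefore not competing with a proof in the paper but supplying one, and the argument you give is precisely Karcher's: the Hessian comparison bound $\Hess(\tfrac{1}{2}\gdistM^2(\cdot,p_i))_x(v,v)\geq \sqrt{\curvupbnd}\,r\cot(\sqrt{\curvupbnd}\,r)$ for unit $v$ and $r=\distM{x}{p_i}<2\rho<\pi/(2\sqrt{\curvupbnd})$, summed against the weights, yields the uniform lower bound $C(\curvupbnd,\rho)=2\sqrt{\curvupbnd}\,\rho\cot(2\sqrt{\curvupbnd}\,\rho)>0$. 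This is correct and is the standard route.

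One small slip: at a boundary point $x\in\bdry{\close{\bmapball}}$ each $\exp_x^{-1}(p_i)$ points \emph{into} the convex ball, so $\grad\enfctbc{x}=-\sum_i\gbarycoord{i}\exp_x^{-1}(p_i)$ points \emph{outward}, not inward as you wrote. This is exactly what the paper states (``the gradient vector lies in a cone of outward pointing vectors''), and it is the outward direction that forces the minimum into the interior: were the gradient inward-pointing, boundary points could be minima. The conclusion stands, but the sign of your claim should be flipped.
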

Karcher gives an explicit expression for $C(\curvupbnd, \rho)$, but we will not need to refer to it here. Also, Karcher expresses the centre of mass concept in more generality by using an integral over a set whose measure is $1$, rather than a weighted sum over a finite set as we have used.

\subsection{The barycentric coordinate map}
\label{sec:bary.map}

Let $\stdsplx{j}$ denote the standard Euclidean $j$-simplex. This can be realised as the set of points $\ggbarycoord \in \reel^{j+1}$ whose components are non-negative, $\gbarycoord{i} \geq 0$, and sum to one: $\sum_i \gbarycoord{i} = 1$. We index the coordinates starting from zero: these are the \defn{barycentric coordinates} on the standard simplex.
\begin{de}[Riemannian simplex]
  \label{def:riem.splx}
  If a finite set $\splxs^j = \asimplex{p_0, \ldots, p_j} \subset \man$ in an $n$-manifold is contained in an open geodesic ball $B_{\rho}$ whose radius, $\rho$, satisfies \Eqnref{eq:good.barymap.rad}, then $\splxs^j$ is the set of vertices of a geometric \defn{Riemannian simplex}, denoted $\riemsplxs^j$, and defined to be the image of the map
  \begin{equation*}
    \begin{split}
      \gbarymap{\splxs^j} :
      &\stdsplx{j} \to \man \\
      &\ggbarycoord \mapsto \argmin_{x \in \close{B}_{\rho}} \enfct{\ggbarycoord}{x}.
    \end{split}
  \end{equation*}
  We say that $\riemsplxs^j$ is \defn{non-degenerate} if $\gbarymap{\splxs^j}$ is a smooth embedding; otherwise it is \defn{degenerate}.
\end{de}
Define an $i$-face of $\riemsplxs^j$ to be the image of an $i$-face of $\stdsplx{j}$.  Since an $i$-face of $\stdsplx{j}$ may be identified with $\stdsplx{i}$ (e.g., by an order preserving map of the vertex indices), the $i$-faces of $\riemsplxs^j$ are themselves Riemannian $i$-simplices. In particular, if $\splxt$ and $\splxu$ are the vertices of Riemannian simplices $\riemsplxt$ and $\riemsplxu$, and $\splxs^i = \splxt \cap \splxu$, then the Riemannian $i$-simplex $\riemsplxs^i$ is a face of both $\riemsplxt$ and $\riemsplxu$.
The \defn{edges} of a Riemannian simplex are the Riemannian $1$-faces. We observe that these are geodesic segments.
We will focus on full dimensional simplices, i.e., unless otherwise specified, $\riemsplxs$ will refer to a Riemannian simplex defined by a set $\splxs$ of $n+1$ vertices in our $n$-dimensional manifold $\man$.

\paragraph{Remarks}

The barycentric coordinate map $\gbarymap{\splxs}$ is differentiable. This follows from the implicit function theorem, as is shown by Buser and Karcher~\cite[\S 8.3.3]{buser1981}, for example. They work in local coordinates on the tangent bundle, and use the connection to split the derivative of $\grad \genfctbc: \man \to T\man$ into horizontal and vertical components.
The strict convexity condition~\eqref{eq:en.fct.strictly.convex} implies that the vertical component of the derivative is full rank, and permits the use of the implicit function theorem.

The argument of Buser and Karcher assumes that the map is defined on an open domain. We observe that $\gbarymap{\splxs}$ is well defined if we allow negative barycentric coordinates of small magnitude.  For a sufficiently small $\epsilon > 0$, \Lemref{RiemannianCentre} holds if the barycentric coordinates $\gbarycoord{i}$ satisfy $\sum \gbarycoord{i} = 1$ and $\gbarycoord{i} > - \epsilon$ for all $i \in \{0, \ldots, n\}$, albeit with $C(\curvupbnd, \rho)$ replaced with a smaller positive constant.  
This follows from the observation that $\frac{d^2}{dt^2}\genfctbc$ is continuous in the barycentric coordinates, thus since it is strictly positive on the boundary of $\stdsplxn$, it can be extended to an open neighbourhood.  This means that $\gbarymap{\splxs}$ is smooth on the closed domain $\stdsplxn$, as defined in \Secref{sec:smooth}.

Karcher himself mentioned that his result can accommodate signed measures \cite[Remark 1.8]{Karcher}, and Sander has demonstrated this in some detail \cite{Sander2}. However, for our current purposes we are only claiming that we can accommodate arbitrarily small negative barycentric coordinates assuming the stated bound on $\barymapradbnd$ (\Eqnref{eq:good.barymap.rad}).

A Riemannian simplex is not convex in general, but as Karcher observed~\cite{Karcher}, being the image of the barycentric coordinate map, it will be contained in any convex set that contains the vertices of the simplex. Thus the Riemannian simplex is contained in the intersection of such sets.

\Eqnref{eq:good.barymap.rad} gives an upper bound on the size of a Riemannian simplex that depends only on the injectivity radius and an \emph{upper} bound on the sectional curvature. For example, in a non-positively curved manifold, the size of a well defined Riemannian simplex is constrained only by the injectivity radius. However, if the dimension $n$ of the manifold is greater than $2$, we will require also a \emph{lower} bound on the sectional curvatures in order to ensure that the simplex is non-degenerate. 

\Lemref{lem:enfct.convex} demands that a Riemannian simplex be contained in a ball whose radius is constrained by $\barymapradbnd$. Thus Riemannian simplices always have edge lengths less than $2\barymapradbnd$. If the longest edge length, $\longedge{\riemsplxs}$, of $\riemsplxs$ is less than $\barymapradbnd$, then $\riemsplxs$ must be contained in the closed ball of radius $\longedge{\riemsplxs}$ centred at a vertex. Indeed, any open ball centred at a vertex whose radius is larger than $\longedge{\riemsplxs}$, but smaller than $\barymapradbnd$, must contain the vertices and have a convex closure. The simplex is thus contained in the intersection of these balls.
If $\longedge{\riemsplxs} \geq \barymapradbnd$, then a ball of  radius $\longedge{\riemsplxs}$ need not be convex. In this case we claim only that $\riemsplxs$ is contained in a ball of radius $2\barymapradbnd$ centred at any vertex.

%

\subsection{The affine independence criterion for non-degeneracy}
\label{sec:implicit.fct.affine}

In this subsection we show that a Riemannian simplex $\riemsplxs$ is non-degenerate if, and only if, for any $x \in \riemsplxs$, the lift of the vertices by the inverse exponential map yields a non-degenerate Euclidean simplex. We first introduce some notation and terminology to better articulate this statement.

\paragraph{Notation}

A Euclidean simplex $\splxs$ of dimension $k$ is defined by a set of $k+1$ points in Euclidean space $\splxs = \asimplex{v_0, \ldots, v_k } \subset \ren$.  In general we work with abstract simplices, even though we attribute geometric properties to the simplex, inherited from the embedding of the vertices in the ambient space (see \Secref{sec:eucl.splxs}).  When we wish to make the dimension explicit, we write it as a superscript, thus $\splxs^k$ is a $k$-simplex.  Traditional ``filled in'' geometric simplices are denoted by boldface symbols; $\splxsE = \convhull{\splxs}$ is the convex hull of $\splxs$. If such a simplex is specified by a vertex list, we employ square brackets: $\splxsE = \simplex{v_0, \ldots, v_k}$.

The \defn{barycentric coordinate functions} $\{ \gbarycoord{i} \}$ associated to $\splxs$ are affine functions $\R^n \to \R$ that satisfy $\barycoord{i}{v_j} = \delta_{ij}$ and $\sum_{i=0}^n \gbarycoord{i} = 1$.  It is often convenient to choose one of the vertices, $v_0$ say, of $\splxs$ to be the origin. We let $P$ be the $n \times k$ matrix whose $i^{\text{th}}$ column is $v_i - v_0$. Then the barycentric coordinate functions $\{\gbarycoord{i}\}$ are linear functions for $i>0$, and they are dual to the basis defined by the columns of $P$. This means that if we represent the function $\gbarycoord{i}$ as a row vector, then the matrix $Q$ whose $i^{\text{th}}$ \emph{row} is $\gbarycoord{i}$ satisfies $QP = I_{k \times k}$. 

A full dimensional Euclidean simplex $\splxs$ is non-degenerate, if and only if the corresponding matrix $P$ is non-degenerate. In particular, if $\splxs$ is full dimensional (i.e., $k=n$), then $Q = \inv{P}$.  Suppose $\splxs \subset \ren$ is an $n$-simplex. If $\xi \in \ren$, let $\ggbarycoord(\xi) = \transp{(\barycoord{1}{\xi}, \ldots, \barycoord{n}{\xi})}$. Then $\ggbarycoord(\xi)$ is the vector of coefficients of $\xi - v_0$ in the basis defined by the columns of $P$. I.e., $\xi - v_0 = P\ggbarycoord(\xi)$.

We will be interested in Euclidean simplices that are defined by the vertices of a Riemannian simplex: If $\splxs = \asimplex{p_0, \ldots, p_n} \subset \bmapball \subset \man$ is the set of vertices of $\riemsplxs$, it is convenient to introduce the notation $v_i(x) = \exp_x^{-1}(p_i)$, and $\splxs(x) = \exp_x^{-1}(\splxs)$. Thus $\splxs(x) = \asimplex{v_0(x), \ldots, v_n(x)}$ is a Euclidean simplex in $\tanspace{x}{\man}$.

The norm of a vector $v$ in a Euclidean space is denoted $\norm{v}$. For example, if $v \in T_p\man$, then $\norm{v} = g(v,v)^{\frac{1}{2}}$, where $g$ is the Riemannian metric tensor on $\man$, and if $v \in \R^n$, then $\norm{v} = (\dotprod{v}{v})^{\frac{1}{2}}$. The differential of a map $F:\man \to \bar{\man}$ is denoted by $dF$; so $dF_x: T_x\man \to T_{F(x)}\bar{\man}$ is a linear map whose operator norm is $\onorm{dF_x}$. All differentiable maps, operators, and manifolds are assumed to be $\cinfty$.

\paragraph{An expression for the differential}

The expression for the differential obtained in \Eqnref{eq:diff.bary.map} below is obtained as a particular case of an argument presented by Buser and Karcher \cite[\S 8.3]{buser1981}. The argument was later exploited by Peters~\cite{peters1984} to sharpen bounds on Cheeger's finiteness theorem \cite{cheeger1970}. A thorough exposition appears also in Chavel~\cite[IX.8]{Chavel}. 

We work in a domain $U \subset \R^n$ defined by a chart $\phi: \man \supset W \to U$ such that $\bmapball \subset W$. Let $\tsplxs = \phi(\splxs)$ be the image of the vertices of a Riemannian $n$-simplex $\riemsplxs \subset \bmapball$. Label the vertices of $\tsplxs = \asimplex{v_0, \ldots, v_n}$ such that $v_i = \phi(p_i)$, and assume $v_0$ is at the origin. The affine functions $\gbarycoord{i}: u \mapsto \barycoord{i}{u}$ are the barycentric coordinate functions of $\tsplxs$.
%
%
We consider $\grad \genfctbc$, introduced in \Eqnref{eq:en.fct.grad}, now to be a vector field that depends on both $u \in U$ and $x \in \bmapball$. Specifically, we consider the vector field $\nu: U \times \bmapball \to \tanspace{}{M}$ defined by
\begin{equation}
  \label{eq:defn.grad.field}
  \nu(u,x) = - \sum_{i=0}^n \barycoord{i}{u}v_i(x).
\end{equation}

Let $\bmap: \tsplxsE \to \riemsplxs$ be defined by $\bmap = \gbarymapsig \circ \gglinmap$, where $\gglinmap$ is the canonical linear isomorphism that takes the vertices of $\tsplxs$ to those of $\stdsplxn$, and $\gbarymapsig$ is the barycentric coordinate map introduced in \Defref{def:riem.splx}. This map is differentiable, by the arguments presented by Buser and Karcher, and $\nu(u, \bmap(u)) = 0$ for all $u \in \tsplxsE$. Regarded as a vector field along $\bmap$, the covariant differential $\conn\nu_{(u, \bmap(u))} = 0$ may be expanded as
\begin{equation*}
  \partial_u\nu + \left(\conn^{\man}\nu \right) d\bmap = 0,
\end{equation*}
where $\partial_u\nu$ denotes the differential of $\nu(u,x)$ with $x$ fixed, i.e.,
\begin{gather*}
  \partial_u\nu_{(u,x)}: \tanspace{u}{\ren} \to \tanspace{x}{\man}\\
  \left(\partial_u \nu_{(u,x)}\right)\dot{u}(0) = \frac{d}{d t} \nu(u(t),x) \big|_{t=0},
\end{gather*}
with $\dot{u}(0)$ denoting the tangent vector at $0$ to some curve $t \mapsto u(t)$ in $U\subset \ren$.
Similarly $\conn^{\man}\nu$ is the covariant differential when $u$ is fixed:
\begin{gather*}
  \conn^{\man} \nu_{(u,x)}: \tanspace{x}{\man} \to \tanspace{x}{\man}\\
  \left(\conn^{\man} \nu_{(u,x)}\right) \dot{x}(0) = D_t \nu(u, x(t)) \big|_{t=0},
\end{gather*}
where $D_t\nu$ is the covariant derivative along the curve $x(t)$. Finally $d\bmap: \tanspace{u}{\ren} \to \tanspace{x}{\man}$ is the differential of $b$, our barycentric coordinate map onto the Riemannian simplex $\riemsplxs$. 

Our objective is to exhibit conditions that ensure that $d\bmap$ is non-degenerate.  It follows from the strict convexity conditon~\eqref{eq:en.fct.strictly.convex} of \Lemref{lem:enfct.convex} that the map $\conn^{\man}\nu$ is non-degenerate. Indeed, if $v \in T_x\man$ for some $x \in \bmapball$, there is a geodesic $\gamma: I \to \bmapball$ with $\gamma'(0)=v$, and $\frac{d^2}{dt^2}\enfctbc{\gamma(t)}\big|_{t=0} = g(\conn^{\man}_{v}\nu,v)  > 0$.  Therefore, we have that
\begin{equation}
  \label{eq:diff.bary.map}
  d\bmap = - \left(\conn^{\man}\nu \right)^{-1} \partial_u \nu,
\end{equation}
and thus $d\bmap$ has full rank if and only if $\partial_u\nu$ has full rank.


\paragraph{The differential as a matrix}

Recalling \Eqnref{eq:defn.grad.field}, notice that when $x$ is fixed, $\nu$ is an affine map $\ren \supset U \to T_x\man$, and so $\left(\partial_u \nu \right)_v = \left(\partial_u \nu \right)_w$ for all $v,w \in U$.  We see that
\begin{equation*}
  \partial_u \nu = - \sum_{i=0}^n v_i(x)\, d\gbarycoord{i}.
\end{equation*}
Since $\sum_{i=0}^n \gbarycoord{i} = 1$, we have that $\sum_{i=0}^n \, d \gbarycoord{i} = 0$. We may thus write $d\gbarycoord{0} = - \sum_{i=1}^n d\gbarycoord{i}$, and so for $\xi \in \tanspace{u}{U}$, we have
\begin{equation}
  \label{eq:deriv.sum}
  \left(\partial_u \nu \right)\xi = - \sum_{i=1}^n  \big( v_i(x) - v_0(x) \big) d\barycoord{i}{\xi}\,.
\end{equation}

Now, since the domain of the barycentric coordinates is $U$, and the origin of $U \subset \ren$ coincides with $v_0$, the functions $\gbarycoord{i}$ for $i \in \{1,\ldots,n \}$ are \emph{linear} functions, and we use the canonical identification of tangent spaces in $\ren$ to conclude that $d\barycoord{i}{\xi} = \barycoord{i}{\xi}$, where in the right hand side we view $\xi$ as an element of $\ren$, rather than an element of $\tanspace{u}{\ren}$. As discussed above, we have $\ggbarycoord(\xi) = P^{-1}\xi$, where $\ggbarycoord(\xi) = \transp{(\barycoord{1}{\xi}, \ldots, \barycoord{n}{\xi})}$, and $P$ is the matrix whose $i^{\text{th}}$ column is $v_i$.
Thus, using an arbitrary linear isometry to get a coordinate system for $\tanspace{x}{\man}$, and letting $\tilde{P}$ be the matrix whose $i^{\text{th}}$ column is $(v_i(x) - v_0(x))$, we may rewrite \Eqnref{eq:deriv.sum} as
\begin{equation}
  \label{eq:matrix.of.deriv}
  \left(\partial_u \nu \right)\xi  = - \tilde{P}\ggbarycoord(\xi)
  = - \tilde{P}P^{-1}\xi.
\end{equation}

From \Eqnref{eq:matrix.of.deriv} we conclude that $\partial_u \nu$ is full rank if and only if $\tilde{P}$ is of full rank, and this is the case if and only if $\splxs(x)$ is a non-degenerate Euclidean simplex, i.e., its vertices $\{v_i(x)\}$ are affinely independent.

We observe that if $d\bmap$ is non-degenerate on $\riemsplxs$, then $\bmap$ must be injective. Indeed, if $x = \bmap(u)$, then $\{ \barycoord{i}{u} \}$, the barycentric coordinates of $u$ with respect to $\tsplxs$, are also the barycentric coordinates of the origin in $\tanspace{x}{\man}$, with respect to the simplex $\splxs(x)$. Thus if $\bmap(u) = x = \bmap(\tilde{u})$, then $\barycoord{i}{u} = \barycoord{i}{\tilde{u}}$, and we must have $\tilde{u} = u$ by the uniqueness of the barycentric coordinates.

In summary, we have
\begin{prop}
  \label{prop:affine.indep}
  A Riemannian simplex $\riemsplxs \subset \man$ is non-degenerate if and only if $\splxs(x) \subset T_x\man$ is non-degenerate for every $x \in \riemsplxs$. 
\end{prop}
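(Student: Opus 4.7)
The plan is to package the computation from the preceding pages into a clean two-direction argument, working on the slightly enlarged open domain on which $\gbarymapsig$ extends smoothly (allowing barycentric coordinates $\gbarycoord{i} > -\epsilon$, as noted after Definition 3.2). With that extension in hand, ``smooth embedding'' means pointwise-invertible differential plus global injectivity on $\stdsplxn$, and both properties will be read off from the implicit equation $\nu(u,\bmap(u)) = 0$.

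For the direction ``$\splxs(x)$ non-degenerate for every $x \in \riemsplxs$ $\Rightarrow$ $\gbarymapsig$ is a smooth embedding'': I would first invoke Lemma~2.2 to see that $\conn^{\man}\nu$ is invertible everywhere on $\bmapball$, so the formula $d\bmap = -(\conn^{\man}\nu)^{-1}\partial_u\nu$ of Equation~\eqref{eq:diff.bary.map} shows that $d\bmap$ has full rank at $u$ iff $\partial_u\nu_{(u,\bmap(u))}$ does. The matrix identity $\partial_u\nu = -\tilde P P^{-1}$ of Equation~\eqref{eq:matrix.of.deriv} then identifies this with the full-rankness of $\tilde P$, i.e., with affine independence of $\{v_i(\bmap(u))\}$, which holds by hypothesis. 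Injectivity is then argued via the uniqueness of barycentric coordinates as sketched in the text preceding the proposition: if $\bmap(u) = \bmap(\tilde u) = x$, then both $\{\barycoord{i}{u}\}$ and $\{\barycoord{i}{\tilde u}\}$ are non-negative solutions (summing to one) of $\sum_i \gbarycoord{i} v_i(x) = 0$; affine independence of $\{v_i(x)\}$ forces these solutions to coincide, hence $u = \tilde u$ by $\gglinmap$ being a bijection.

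For the converse direction, if $\gbarymapsig$ is a smooth embedding, its differential is a linear isomorphism at every point. Running the chain $d\bmap = -(\conn^{\man}\nu)^{-1}\partial_u\nu$ in reverse, $\partial_u\nu$ must be full rank, and Equation~\eqref{eq:matrix.of.deriv} (multiplying on the right by the invertible $P$) gives that $\tilde P$ has full rank, so $\splxs(x)$ is non-degenerate at $x = \bmap(u)$. Since every point of $\riemsplxs$ is hit by such a $u$, this covers all of $\riemsplxs$.

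The main subtlety I expect is bookkeeping rather than real difficulty: one must fix a chart $\phi$ with $v_0$ placed at the origin so that the barycentric functions $\gbarycoord{i}$ are genuinely linear for $i \geq 1$, justifying the identification $d\barycoord{i}{\xi} = \barycoord{i}{\xi}$ used to pass from Equation~\eqref{eq:deriv.sum} to Equation~\eqref{eq:matrix.of.deriv}. The matrix $\tilde P$ is written in some chosen linear isometry of $\tanspace{x}{\man}$, so $\tilde P P^{-1}$ is basis-dependent, but its rank is intrinsic; this lets one pass from the matrix statement back to the coordinate-free statement about affine independence of $\{v_i(x)\}$ in $\tanspace{x}{\man}$. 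Once this is spelled out the proposition follows without further estimates.
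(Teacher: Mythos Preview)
Your proposal is correct and follows essentially the same approach as the paper: the paper's proof is exactly the computation carried out in the paragraphs preceding the proposition (Equations~\eqref{eq:diff.bary.map}--\eqref{eq:matrix.of.deriv}), together with the injectivity argument via uniqueness of barycentric coordinates in $\tanspace{x}{\man}$. Your observation about the bookkeeping (placing $v_0$ at the origin so the $\gbarycoord{i}$ are linear, and the basis-independence of the rank of $\tilde{P}$) matches the paper's treatment precisely.
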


%

\section{Non-degeneracy criteria}
\label{sec:non.degen}

In this section we exploit \Propref{prop:affine.indep} to establish geometric criteria that ensure that a Riemannian simplex is non-degenerate. In \Secref{sec:implicit.fct.affine} we worked in an arbitrary coordinate chart $\phi: \man \supset W \to \ren$, where the convex ball $\bmapball$ containing $\riemsplxs$ is contained in $W$. Now we will choose $\phi$ to be the inverse of the exponential map at some point $p \in \bmapball$. Specifically, we set $\phi = u \circ \exp_p^{-1}: W \to \rem$, where $u: T_p\man \to \ren$ is an arbitrary linear isometry. The coordinate function $u$ serves to represent a generic point in $U = \phi(W)$.  The Euclidean simplex $\tilde{\splxs}$ in the coordinate domain can now be identified with $\splxs(p)$, and we observe that $\exp_x \circ \exp_p^{-1}$ maps $\splxs(p)$ to $\splxs(x)$.

In \Secref{sec:eucl.splxs} we review some properties of Euclidean simplices, including \defn{thickness}, the quality measure that we use, and recall a lemma that bounds the difference in thickness between two simplices whose corresponding edge lengths are almost the same. Thus given an assumed thickness of $\splxs(p)$, the question of whether or not $\riemsplxs$ is degenerate becomes a question of how much the exponential transition function~\eqref{eq:exp.transition} distorts distances. In order to address this question, we exploit the Rauch comparison theorem, which we discuss in \Secref{sec:rauch.thm}. We put these observations together in \Secref{sec:non.degen.riem} to obtain explicit bounds on the required quality of $\splxs(p)$, relative to its size (longest edge length) and the sectional curvatures in $\bmapball$.

\subsection{The stability of Euclidean simplex quality}
\label{sec:eucl.splxs}

A Euclidean simplex $\splxs = \asimplex{v_0, \ldots, v_k} \subset \R^n$ has a number of geometric attributes.  An $i$-face of $\splxs$ is a subset of $i+1$ vertices, and a $(k-1)$ face of a $k$-simplex is a \defn{facet}. The facet of $\splxs$ that does not have $v_i$ as a vertex is denoted $\opface{v_i}{\splxs}$.  The \defn{altitude} of $v_i \in \splxs$ is the distance from $v_i$ to the affine hull of $\opface{v_i}{\splxs}$, denoted $\splxalt{v_i}{\splxs}$, and the longest edge length is denoted $\longedge{\splxs}$. When there is no risk of confusion, we will simply write $\glongedge$, and $\gsplxalt_i$.

The simplex quality measure that we will use is the \defn{thickness} of a $k$-simplex $\splxs$, defined as
\begin{equation}
  \label{eq:defn.thickness}
  \thickness{\splxs} =
  \begin{cases}
    1& \text{if $k=0$} \\
    \min_{v \in \splxs} \dfrac{\gsplxalt_{v}}{k
      \glongedge}& \text{otherwise.}
  \end{cases}
\end{equation}
If $\thickness{\splxs} = 0$, then $\splxs$ is \defn{degenerate}.  We say that $\splxs$ is $\thickbnd$-thick, if $\thickness{\splxs} \geq \thickbnd$. If $\splxs$ is $\thickbnd$-thick, then so are all of its faces. We write $\gthickness$ for the thickness if the simplex in question is clear. 

As discussed in \Secref{sec:implicit.fct.affine}, we can associate a matrix $P$ to a Euclidean simplex.  The quality of a simplex $\splxs$ is closely related to the quality of $P$, which can be quantified by means of its
 \defn{singular values}.  In fact, we are only interested in the smallest and largest singular values. The smallest singular value, $\sing{k}{P} = \inf_{\norm{x}=1}\norm{Px}$, vanishes if and only if the matrix $P$ does not have full rank. The largest singular value is the same as the operator norm of $P$, i.e., $\sing{1}{P} = \onorm{P} = \sup_{\norm{x}=1}\norm{Px}$. We have the following result \cite[Lem. 2.4]{boissonnat2013stab1} relating the thickness of $\splxs$ to the smallest singular value of $P$:
\begin{lem}[Thickness and singular value]
  \label{lem:bound.skP}
  Let $\splxs = \asimplex{v_0, \ldots, v_k}$ be a non-de\-generate
  $k$-simplex in $\ren$, with $k>0$, and let $P$ be the $n \times k$
  matrix whose $i^\text{th}$ column is $v_i - v_0$. Then the
  $i^{\text{th}}$ row of the pseudo-inverse $\pseudoinv{P} =
  (\transp{P}P)^{-1}\transp{P}$ is given by $\transp{w}_i$, where
  $w_i$ is orthogonal to $\affhull{\opface{v_i}{\splxs}}$, and
  \begin{equation*}
    \norm{w_i} = \gsplxalt_{i}^{-1}.
  \end{equation*}
  We have the following bound on the smallest singular value of $P$:
  \begin{equation*}
    \sing{k}{P} \geq \sqrt{k} \gthickness\glongedge.
  \end{equation*}
\end{lem}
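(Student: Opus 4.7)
The plan is to identify the rows of $\pseudoinv{P}$ directly from the defining relation $\pseudoinv{P}P = I_{k\times k}$, geometrically interpret what this relation says, and then derive the singular value bound from a Frobenius-style estimate on $\pseudoinv{P}$.

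First I would write $\pseudoinv{P} = (\transp{P}P)^{-1}\transp{P}$ and note that its columns lie in the column space of $P$, so each row $\transp{w}_i$ satisfies $w_i \in \Span\{v_j - v_0 : j = 1, \ldots, k\}$. The identity $\pseudoinv{P}P = I$ then says precisely that $\dotprod{w_i}{(v_j - v_0)} = \delta_{ij}$. For fixed $i$, the conditions $\dotprod{w_i}{(v_j - v_0)} = 0$ for all $j \neq i$, $j > 0$, together with the fact that $w_i$ lies in the translated affine hull of $\splxs$, force $w_i$ to be orthogonal to $\affhull{\opface{v_i}{\splxs}}$ (the hyperplane of the facet opposite $v_i$), within the $k$-dimensional subspace $\affhull{\splxs} - v_0$. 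Since this orthogonal complement is one-dimensional, $w_i$ is a scalar multiple of the unit vector $\uvect{n}_i$ normal to $\affhull{\opface{v_i}{\splxs}}$ pointing toward $v_i$. Writing $v_i - v_0 = u + \gsplxalt_i \uvect{n}_i$ with $u$ parallel to $\affhull{\opface{v_i}{\splxs}}$, the condition $\dotprod{w_i}{(v_i - v_0)} = 1$ yields $\norm{w_i}\gsplxalt_i = 1$, hence $\norm{w_i} = \gsplxalt_i^{-1}$.

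For the singular value bound, I would use that the largest singular value of $\pseudoinv{P}$ equals $\sing{k}{P}^{-1}$, combined with the crude estimate
\begin{equation*}
\onorm{\pseudoinv{P}}^2 \leq \sum_{i=1}^k \norm{w_i}^2 = \sum_{i=1}^k \gsplxalt_i^{-2}.
\end{equation*}
By the definition of thickness, $\gsplxalt_i \geq k\gthickness\glongedge$ for every $i$, so the sum is bounded by $k \cdot (k\gthickness\glongedge)^{-2}$, giving $\onorm{\pseudoinv{P}} \leq (\sqrt{k}\gthickness\glongedge)^{-1}$. Inverting yields $\sing{k}{P} \geq \sqrt{k}\gthickness\glongedge$.

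There is no serious obstacle; the whole argument is a short exercise in linear algebra once one recognises $w_i$ as a normal covector to the opposite facet. The only subtle point to state carefully is that $w_i$ must be sought inside $\affhull{\splxs} - v_0$ (since it lies in the column space of $P$), which is what makes the orthogonal complement one-dimensional and pins down $\norm{w_i}$ via the altitude rather than leaving it underdetermined in $\ren$.
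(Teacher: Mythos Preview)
Your proof is correct. The paper does not actually prove this lemma; it quotes the result from \cite[Lem.~2.4]{boissonnat2013stab1}. Your argument is essentially the natural one: the dual-basis identity $\pseudoinv{P}P = I$ together with the constraint $w_i \in \mathrm{col}(P)$ pins down $w_i$ as the (scaled) normal to the opposite facet, and then the Frobenius bound $\onorm{\pseudoinv{P}}^2 \leq \sum_i \norm{w_i}^2$ combined with $\gsplxalt_i \geq k\gthickness\glongedge$ gives the singular-value estimate directly.
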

The appearance of the dimension $k$ in the denominator in the definition of thickness is a convention introduced so that $\gthickness$ provides a clean bound on the condition number of $P$: Since the columns of $P$ have norm bounded by $\glongedge$, we have that $\sing{1}{P} \leq \sqrt{k}\glongedge$, and thus \Lemref{lem:bound.skP} implies $\frac{\sing{1}{P}}{\sing{k}{P}} \leq \gthickness^{-1}$. Although we adhere to definition~\eqref{eq:defn.thickness} in this work, we acknowledge that this normalisation convention may obscure the relationship between simplex quality and dimension. We frequently make use of the fact that for a $k$-simplex $\splxs$, we have $k\thickness{\splxs} \leq 1$. 

The crucial property of thickness for our purposes is its stability. If two Euclidean simplicies with corresponding vertices have edge lengths that are almost the same, then their thicknesses will be almost the same. This allows us to quantify a bound on the smallest singular value of the matrix associated with one of the simplices, given a bound on the other. To be precise, we have the following consequence of the more general \Lemref{lem:abstract.eucl.splx} demonstrated in \Secref{sec:abstract.eucl.splx}:
\begin{lem}[Thickness under distortion]
  \label{lem:intrinsic.thick.distortion}
  Suppose that $\splxs = \asimplex{v_0, \ldots, v_k}$ and $\tsplxs =
  \asimplex{\tilde{v}_0, \ldots, \tilde{v}_k}$ are two $k$-simplices in
  $\ren$ such that
  \begin{equation*}
    \abs{\norm{v_i - v_j} - \norm{\tilde{v}_i - \tilde{v}_j}} \leq
    \metlipconst \longedge{\splxs}
  \end{equation*}
  for all $0 \leq i < j \leq k$.  
  Let $P$ be the matrix whose
  $i^{\text{th}}$ column is $v_i - v_0$, and define $\tilde{P}$
  similarly.
 
  If
  \begin{equation*}
    \metlipconst =  \frac{\eta \thickness{\splxs}^2}{4}
   \qquad \text{ with } \quad 0 \leq \eta \leq 1,
  \end{equation*}
  then
  \begin{equation*}
    \sing{k}{\tilde{P}} \geq (1 - \eta) \sing{k}{P}.
  \end{equation*}
  and
  \begin{equation*}
    \thickness{\tsplxs} \geq \frac{4}{5\sqrt{k}}(1 - \eta)
    \thickness{\splxs}. 
  \end{equation*}
\end{lem}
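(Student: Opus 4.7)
\textit{Proof proposal.} The plan is to reduce everything to a perturbation estimate on Gram matrices, since edge lengths determine the Gram matrix $G = \transp{P}P$ via the polarisation identity
\begin{equation*}
G_{ij} = \dotprod{(v_i-v_0)}{(v_j-v_0)} = \tfrac{1}{2}\bigl(\norm{v_i-v_0}^2 + \norm{v_j-v_0}^2 - \norm{v_i-v_j}^2\bigr),
\end{equation*}
and similarly for $\tilde G = \transp{\tilde P}\tilde P$. First I would show that when the corresponding edge lengths of $\splxs$ and $\tsplxs$ differ by at most $\metlipconst \glongedge$, the differences of squared lengths satisfy $\abs{\norm{v_i-v_j}^2 - \norm{\tilde v_i - \tilde v_j}^2} \leq \metlipconst\glongedge(2\glongedge + \metlipconst\glongedge) \leq 3\metlipconst\glongedge^2$ (using $\metlipconst \leq 1$, which is safe since $\thickness{\splxs} \leq 1/k$). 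A routine bound on the operator norm of the symmetric matrix $G - \tilde G$ in terms of its entries then yields $\onorm{G - \tilde G} \leq c\, k\, \metlipconst\, \glongedge^2$ for a small explicit constant $c$.

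Next I would invoke Weyl's inequality for eigenvalues of symmetric matrices, which gives
\begin{equation*}
\sing{k}{\tilde P}^2 = \lambda_{\min}(\tilde G) \geq \lambda_{\min}(G) - \onorm{G - \tilde G} = \sing{k}{P}^2 - \onorm{G - \tilde G}.
\end{equation*}
Combining this with \Lemref{lem:bound.skP}, which gives $\sing{k}{P}^2 \geq k\thickness{\splxs}^2\glongedge^2$, the substitution $\metlipconst = \eta\thickness{\splxs}^2/4$ produces $\sing{k}{\tilde P}^2 \geq \sing{k}{P}^2\bigl(1 - c'\eta\bigr)$ for a constant $c'$ that can be arranged to be at most $1$ by tracking the polarisation estimate carefully; then $\sqrt{1 - c'\eta} \geq 1 - \eta$ for $\eta \in [0,1]$ yields the first conclusion $\sing{k}{\tilde P} \geq (1-\eta)\sing{k}{P}$.

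For the thickness bound I would pass through the pseudo-inverse. By \Lemref{lem:bound.skP}, the row $\transp{\tilde w_i}$ of the pseudo-inverse $\pseudoinv{\tilde P}$ has norm $1/\splxalt{\tilde v_i}{\tsplxs}$, and in general $\onorm{\pseudoinv{\tilde P}} = 1/\sing{k}{\tilde P}$, so every altitude of $\tsplxs$ is bounded below by $\sing{k}{\tilde P} \geq (1-\eta)\sing{k}{P} \geq (1-\eta)\sqrt{k}\thickness{\splxs}\glongedge$. For the longest edge of $\tsplxs$, the assumption gives $\longedge{\tsplxs} \leq (1+\metlipconst)\glongedge \leq \tfrac{5}{4}\glongedge$ (using $\metlipconst \leq \thickness{\splxs}^2/4 \leq 1/4$). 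Dividing the altitude lower bound by $k\longedge{\tsplxs}$ and simplifying yields
\begin{equation*}
\thickness{\tsplxs} \geq \frac{(1-\eta)\sqrt{k}\thickness{\splxs}\glongedge}{k \cdot \tfrac{5}{4}\glongedge} = \frac{4}{5\sqrt{k}}(1-\eta)\thickness{\splxs}.
\end{equation*}

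The main obstacle will be bookkeeping the constants precisely enough to land exactly on the stated $\eta\thickness{\splxs}^2/4$ threshold and the $4/(5\sqrt{k})$ prefactor. In particular, the bound on $\onorm{G - \tilde G}$ must be sharp (most likely via a sum-of-squares estimate on the off-diagonal perturbations rather than a crude Frobenius bound), and the final simplification relies on the specific form of the quadratic inequality $1 - c'\eta \geq (1-\eta)^2$. Once the perturbation estimate is tight, everything else is essentially a direct computation from \Lemref{lem:bound.skP} and the definition of thickness; the paper's appeal to the more general \textit{abstract} Euclidean simplex lemma (\Lemref{lem:abstract.eucl.splx}) presumably handles exactly these bookkeeping details in a unified way.
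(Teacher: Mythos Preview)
Your approach is essentially the paper's own: the proof of \Lemref{lem:abstract.eucl.splx} (of which the present lemma is the special case where the $\ell_{ij}$ are realised by an actual simplex) proceeds exactly via the Gram-matrix perturbation from polarisation (\Lemref{lem:splx.gram}, giving $\onorm{E} \leq 4k\metlipconst\glongedge^2$ when $\metlipconst \leq 2/3$), the min--max eigenvalue inequality, and \Lemref{lem:bound.skP}, landing on $\sing{k}{\tilde P}^2 \geq (1-\eta)\sing{k}{P}^2$ and hence the first claim.

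There is one genuine oversight in your thickness step. The rows of $\pseudoinv{\tilde P}$ in \Lemref{lem:bound.skP} give $\norm{\tilde w_i} = 1/\splxalt{\tilde v_i}{\tsplxs}$ only for $i \in \{1,\ldots,k\}$, so the bound $\splxalt{\tilde v_i}{\tsplxs} \geq \sing{k}{\tilde P}$ you derive does \emph{not} cover the altitude at $\tilde v_0$; as written, your argument could fail if $\tilde v_0$ happens to be the vertex of minimal altitude in $\tsplxs$. The paper fixes this by first relabelling the vertices (of both simplices, preserving the correspondence) so that the minimal altitude of $\tsplxs$ is realised at some $\tilde v_i$ with $i \neq 0$; this is exactly the content of \Lemref{lem:low.bnd.thick}. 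Since the Gram-matrix perturbation bound, the inequality $\sing{k}{P} \geq \sqrt{k}\,\thickness{\splxs}\glongedge$, and the edge-length ratio $\glongedge/\longedge{\tsplxs} \geq 1/(1+\metlipconst) \geq 4/5$ are all label-independent, nothing else in the computation changes after relabelling, and your final display then goes through verbatim.
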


\subsection{The Rauch Comparison Theorem}
\label{sec:rauch.thm}

The Rauch comparison theorem gives us bounds on the norm of the differential of the exponential map. This in turn implies a bound on how much the exponential map can distort distances. It is called a comparison theorem because it is implicitly comparing the exponential map on the given manifold to that on a space of constant sectional curvatures. In this context we encounter the functions
\begin{equation*}
  \bfS_{\cnstcurv}(r) = 
  \begin{cases}
    (1/ \sqrt{\kappa})\sin \sqrt{\kappa}r &\qquad \cnstcurv>0\\
    r &\qquad \cnstcurv=0\\
    (1/ \sqrt{-\kappa})\sinh \sqrt{-\kappa}r &\qquad \cnstcurv<0,
  \end{cases}
\end{equation*}
parameterised by $\cnstcurv$, which can be thought of as representing a constant sectional curvature.

The Rauch theorem can be found in Buser and Karcher \cite[\S 6.4]{buser1981} or in Chavel~\cite[Thm. IX.2.3]{Chavel}, for example. In the statement of the theorem we implicitly use the identification between the tangent spaces of a tangent space and the tangent space itself.
\begin{lem}[Rauch theorem]
  Radially the exponential map $\exp_p: \tanspace{p}{\man} \to \man$ is an isometry:
  \begin{equation*}
    \norm{(d\exp_p)_{v} v} = \norm{v}.
  \end{equation*}

  Assume the sectional curvatures, $\seccurv$, are bounded by $\curvlowbnd \leq \seccurv \leq \curvupbnd$.  Taking $\norm{v}=1$, one has for any $w$ perpendicular to $v$
  \begin{equation*}
    \frac{\bfS_{\curvupbnd}(r)}{r} \norm{w} \leq \norm{(d\exp_p)_{rv}w} \leq 
    \frac{\bfS_{\curvlowbnd}(r)}{r} \norm{w}.
  \end{equation*}
  The inequalities hold when $r < 2\barymapradbnd$ (defined in \Eqnref{eq:good.barymap.rad}). Also, if $\curvlowbnd < 0$, then the right inequality is valid for all $r$, and if $\curvupbnd > 0$, then the left inequality is valid for all $r$. 
\end{lem}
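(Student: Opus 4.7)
The plan is to reduce the statement to the classical Rauch comparison for Jacobi fields, via the standard identification of $d\exp_p$ with Jacobi fields along radial geodesics. Fix a unit vector $v \in \tanspace{p}{\man}$ and let $\gamma(t) = \exp_p(tv)$. For any $w \in \tanspace{p}{\man}$, differentiating the variation $(s,t) \mapsto \exp_p(t(v+sw))$ shows that $J(t) = (d\exp_p)_{tv}(tw)$ is the Jacobi field along $\gamma$ with $J(0) = 0$ and $\conn_t J(0) = w$. Hence $(d\exp_p)_{rv}(w) = J(r)/r$, and everything reduces to bounding $\norm{J(r)}$.

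For the radial statement I would take $w$ parallel to $v$: the candidate $J(t) = tV(t)$, with $V$ the parallel transport of $v$ along $\gamma$, satisfies the Jacobi equation because $R(V,\gamma')\gamma'$ vanishes ($V$ stays parallel to $\gamma'$ and $R$ is antisymmetric in its first two slots). Uniqueness of Jacobi fields with the prescribed initial data then gives $\norm{(d\exp_p)_{rv} v} = \norm{V(r)} = \norm{v}$, and linearity in $v$ handles the non-unit case. For the second part, with $\norm{v}=1$ and $w \perp v$, the Jacobi field remains normal to $\gamma$; in a constant-curvature model of curvature $\cnstcurv$ the analogous normal Jacobi field has norm $\bfS_{\cnstcurv}(t)\norm{w}$, as one reads off from the scalar Jacobi equation $f'' + \cnstcurv f = 0$ with $f(0)=0$, $f'(0)=\norm{w}$. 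The essential step is then the classical Rauch comparison for Jacobi fields: using that Jacobi fields minimise the index form among normal variations with the appropriate endpoint behaviour, one transports a suitably scaled parallel field from the model space as a competitor and compares index values using the curvature bounds, yielding $\bfS_{\curvupbnd}(t)\norm{w} \leq \norm{J(t)} \leq \bfS_{\curvlowbnd}(t)\norm{w}$.

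The main obstacle is keeping track of the range of validity, since the comparison requires conjugate-point-free intervals in the model spaces. The upper bound is unproblematic when $\curvlowbnd \leq 0$ and requires $r < \pi/\sqrt{\curvlowbnd}$ otherwise; the lower bound requires $\gamma$ itself to avoid conjugate points before $r$, which is guaranteed for $r < \pi/\sqrt{\curvupbnd}$ by the standard Morse--Schoenberg argument from the upper sectional curvature bound. The hypothesis $r < 2\barymapradbnd$ combined with \Eqnref{eq:good.barymap.rad} yields $r < \pi/(2\sqrt{\curvupbnd})$, comfortably covering both cases. When $\curvupbnd > 0$ and $r$ exceeds the model conjugate distance, $\bfS_{\curvupbnd}(r)$ becomes non-positive and the left inequality is trivial, accounting for the extended-validity remark at the end of the statement; an analogous observation handles the $\curvlowbnd < 0$ case on the right.
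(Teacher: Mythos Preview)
The paper does not actually prove this lemma: it is stated as a quotation of a classical result, with references to Buser--Karcher~\cite[\S 6.4]{buser1981} and Chavel~\cite[Thm.~IX.2.3]{Chavel} given in the text immediately preceding the statement. Your sketch is the standard proof one finds in those references---identify $(d\exp_p)_{rv}w$ with $J(r)/r$ for the Jacobi field with $J(0)=0$, $\conn_t J(0)=w$, handle the radial direction explicitly, and invoke the index-form comparison against the constant-curvature model for the normal component---so there is nothing to compare against in the paper itself. Your treatment of the range of validity (conjugate-point constraints in $\man$ and in the model spaces, and the triviality of the inequalities once $\bfS_{\curvupbnd}$ goes non-positive) is also the usual one and is consistent with the paper's remarks.
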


For convenience, we will use a bound on the absolute value of the sectional curvatures, rather than separate upper and lower bounds. Thus $\abs{\seccurv} \leq \curvabsbnd$, where $\curvabsbnd = \max\{ \curvupbnd, - \curvlowbnd \}$. We use
Taylor's theorem to obtain
\begin{align*}
  \bfS_{-\curvabsbnd}(r) &\leq r +  \frac{\curvabsbnd r^3}{2} \qquad \text{when } 0 \leq r < \frac{\pi}{2 \sqrt{\curvabsbnd}}\\
  \bfS_{\curvabsbnd}(r) &\geq r - \frac{\curvabsbnd  r^3}{6} \qquad \text{for all } r \geq 0.
\end{align*}
We can restate the Rauch theorem in a weaker, but more convenient form:
\begin{lem}
  \label{lem:poly.rauch}
  Suppose the sectional curvatures in $\man$ are bounded by $\abs{\seccurv} \leq \curvabsbnd$. If $v \in T_p\man$ satisfies $\norm{v} = r < \frac{\pi}{2\sqrt{\curvabsbnd}}$, then for any vector $w \in T_v(T_pM) \cong T_pM$, we have
  \begin{equation*}
    \left(1 - \frac{\curvabsbnd  r^2}{6} \right)\norm{w}
    \leq \norm{(d\exp_p)_{v}w} \leq 
    \left(1 + \frac{\curvabsbnd  r^2}{2} \right)\norm{w}.
  \end{equation*}
\end{lem}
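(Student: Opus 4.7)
The plan is to derive Lemma \ref{lem:poly.rauch} directly from the Rauch theorem stated just above, together with the two Taylor-type bounds on $\bfS_{\pm\curvabsbnd}(r)$. Taking $\curvlowbnd = -\curvabsbnd$ and $\curvupbnd = \curvabsbnd$ in the Rauch theorem gives, for $w$ perpendicular to $v$ (writing $v = r\hat v$ with $\norm{\hat v} = 1$),
\begin{equation*}
    \frac{\bfS_{\curvabsbnd}(r)}{r}\norm{w} \;\leq\; \norm{(d\exp_p)_v w} \;\leq\; \frac{\bfS_{-\curvabsbnd}(r)}{r}\norm{w}.
\end{equation*}
Substituting the two stated Taylor inequalities and dividing by $r$ then immediately yields
\begin{equation*}
    \Bigl(1 - \tfrac{\curvabsbnd r^2}{6}\Bigr)\norm{w} \;\leq\; \norm{(d\exp_p)_v w} \;\leq\; \Bigl(1 + \tfrac{\curvabsbnd r^2}{2}\Bigr)\norm{w}
\end{equation*}
whenever $w \perp v$. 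The hypothesis $r < \pi/(2\sqrt{\curvabsbnd})$ is exactly what is needed to apply the upper Taylor bound for $\bfS_{-\curvabsbnd}$, and it also ensures $1 - \curvabsbnd r^2/6 > 0$.

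Next, I would reduce the case of an arbitrary $w \in T_v(T_p\man) \cong T_p\man$ to the two cases ``parallel to $v$'' and ``perpendicular to $v$''. Split $w = w_\parallel + w_\perp$ with $w_\parallel$ a scalar multiple of $v$ and $w_\perp \perp v$. By the radial isometry part of the Rauch theorem, $\norm{(d\exp_p)_v w_\parallel} = \norm{w_\parallel}$, and by the Gauss lemma (which is encoded in that same radial statement) the image $(d\exp_p)_v w_\parallel$ is orthogonal to $(d\exp_p)_v w_\perp$. Hence by Pythagoras
\begin{equation*}
    \norm{(d\exp_p)_v w}^2 \;=\; \norm{w_\parallel}^2 + \norm{(d\exp_p)_v w_\perp}^2,
\end{equation*}
and the previous paragraph bounds the second term by $(1 \pm \cdots)^2 \norm{w_\perp}^2$.

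To conclude, I would use the trivial observation that $1 - \curvabsbnd r^2/6 \leq 1 \leq 1 + \curvabsbnd r^2/2$, so that the scalar factors can be pulled across from the perpendicular component onto the full $\norm{w}^2 = \norm{w_\parallel}^2 + \norm{w_\perp}^2$. Explicitly,
\begin{equation*}
    \Bigl(1 - \tfrac{\curvabsbnd r^2}{6}\Bigr)^2\norm{w}^2 \;\leq\; \norm{w_\parallel}^2 + \Bigl(1 - \tfrac{\curvabsbnd r^2}{6}\Bigr)^2\norm{w_\perp}^2 \;\leq\; \norm{(d\exp_p)_v w}^2,
\end{equation*}
and analogously for the upper bound with $1 + \curvabsbnd r^2/2$. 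Taking square roots (legitimate since $1 - \curvabsbnd r^2/6 > 0$ under the size hypothesis on $r$) gives the claim. The only subtlety is checking the orthogonality of the images, but this is exactly the Gauss lemma, which is built into the Rauch statement already cited; once that is in hand, the rest is bookkeeping with Pythagoras and the two Taylor estimates.
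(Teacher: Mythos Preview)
Your argument is correct and is precisely the approach the paper has in mind: the paper does not give a separate proof but simply presents the lemma as a restatement of the Rauch theorem after inserting the Taylor estimates on $\bfS_{\pm\curvabsbnd}(r)/r$. You have filled in the only nontrivial detail the paper omits, namely the passage from perpendicular $w$ to arbitrary $w$ via the orthogonal decomposition and Pythagoras.

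One small remark: the orthogonality of $(d\exp_p)_v w_\parallel$ and $(d\exp_p)_v w_\perp$ is the Gauss lemma, but it is not quite ``encoded in'' the radial isometry statement as you phrase it; the radial statement only gives preservation of the norm of radial vectors, whereas you need preservation of the inner product $\langle (d\exp_p)_v v, (d\exp_p)_v w\rangle = \langle v,w\rangle$. Of course the Gauss lemma is standard and more elementary than Rauch, so this is only a matter of attribution, not of substance.
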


\subsection{Non-degenerate Riemannian simplices}
\label{sec:non.degen.riem}

Our goal now is to estimate the metric distortion incurred when we map a simplex from one tangent space to another via the exponential maps
\begin{equation*}
  \exp_x^{-1} \circ \exp_{p}: T_{p}\man \to T_x\man,
\end{equation*}
and this is accomplished by the bounds on the differential. Specifically, if $F:\R^n \to \R^n$ satisfies $\onorm{dF} \leq \eta$, then the length of the image of the line segment between $x$ and $y$ provides an upper bound on the distance between $F(x)$ and $F(y)$:
\begin{equation}
  \label{eq:up.bnd.length}
  \norm{F(y) - F(x)} \leq \int_0^1 \norm{dF_{x +s(y-x)}(y-x)} \, ds \leq \eta \norm{y-x}.
\end{equation}

If $x,p, y \in \bmapball$, with $y = \exp_{p}(v)$, then $\norm{v} < 2\rho$, and $\norm{\exp^{-1}_x(y)} < 2\rho$. Then, if $\rho < \barymapradbnd$ given in \Eqnref{eq:good.barymap.rad}, \Lemref{lem:poly.rauch} tells us that
\begin{equation*}
  \onorm{d\left(\exp_x^{-1} \circ \exp_{p} \right)_{v}}
  \leq   \onorm{\left(d\exp_x^{-1}\right)_y}
  \onorm{ \left(d\exp_{p} \right)_{v}}
  \leq \left(1 + \frac{\curvabsbnd(2\rho)^2}{3} \right)
   \left(1 + \frac{\curvabsbnd(2\rho)^2}{2} \right)
  \leq 1 + 5\curvabsbnd \rho^2.
\end{equation*}
Therefore \eqref{eq:up.bnd.length} yields
\begin{equation*}
  \norm{v_i(x) - v_j(x)} \leq (1 + 5\curvabsbnd\rho^2) \norm{v_i(p) - v_j(p)}.
\end{equation*}
We can do the same argument the other way, so
\begin{equation*}
  \norm{v_i(p) - v_j(p)} \leq (1 + 5\curvabsbnd\rho^2) \norm{v_i(x) - v_j(x)},
\end{equation*}
and we find
\begin{equation}
  \begin{split}
    \label{eq:bnd.edge.distort}
    \big| \norm{v_i(x) - v_j(x)} - \norm{v_i(p) - v_j(p)} \big|
    &\leq 5\curvabsbnd\rho^2(1 + 5\curvabsbnd\rho^2) \norm{v_i(p) - v_j(p)}\\
    &\leq 21\curvabsbnd\rho^2 \norm{v_i(p) - v_j(p)} \qquad \text{when } \rho < \barymapradbnd.
  \end{split}
\end{equation}

Letting $P$ be the matrix associated with $\splxs(p)$, and using $\metlipconst = 21\curvabsbnd\rho^2$, in \Lemref{lem:intrinsic.thick.distortion}, we find that the matrix $\tilde{P}$ associated with $\splxs(x)$ in \Propref{prop:affine.indep} is non-degenerate if $\splxs(p)$ satisfies a thickness bound of $\thickbnd > 10 \sqrt{\curvabsbnd}\rho$, and we have
\begin{mainthm}
  \label{thm:thick.nondegen.riem}
  Suppose $\man$ is a Riemannian manifold with sectional curvatures $\seccurv$ bounded by $\abs{\seccurv} \leq \curvabsbnd$, and $\riemsplxs$ is a Riemannian simplex, with $\riemsplxs \subset \bmapball \subset \man$, where $B_{\rho}$ is an open geodesic ball of radius $\rho$ with
  \begin{equation*}
    \rho < \barymapradbnd = \min \left \{ \frac{\injradM}{2},
      \frac{\pi}{4\sqrt{\curvabsbnd}} \right \}.
  \end{equation*}
  Then $\riemsplxs$ is non-degenerate if there is a point $p \in \bmapball$ such that the lifted Euclidean simplex $\splxs(p)$ has thickness satisfying
 \begin{equation*}
   \thickness{\splxs(p)} > 10 \sqrt{\curvabsbnd}\rho.
 \end{equation*}
The ball $\bmapball$ may be chosen so that this inequality is necessarily satisfied if
 \begin{equation*}
   \thickness{\splxs(p)} > 10 \sqrt{\curvabsbnd}\longedge{\riemsplxs},
 \end{equation*}
 where $\longedge{\riemsplxs}$ is the geodesic length of the longest edge in $\riemsplxs$.
\end{mainthm}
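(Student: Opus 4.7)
The plan is to combine the three tools already assembled: \Propref{prop:affine.indep}, \Lemref{lem:intrinsic.thick.distortion}, and \Lemref{lem:poly.rauch}. By \Propref{prop:affine.indep}, it suffices to show that for every $x \in \riemsplxs$ the lifted Euclidean simplex $\splxs(x) \subset T_x\man$ is non-degenerate. I will do this by comparing $\splxs(x)$ to the given thick simplex $\splxs(p)$ via the transition map $F = \exp_x^{-1} \circ \exp_p : T_p\man \to T_x\man$, which sends $\splxs(p)$ to $\splxs(x)$; a metric distortion bound on $F$ plus stability of thickness will do the job.

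Concretely, every vector $v_i(p) = \exp_p^{-1}(p_i)$ has norm less than $2\rho$, and similarly on the $x$-side, so \Lemref{lem:poly.rauch} applies to both $d\exp_p$ and $d\exp_x^{-1}$. Multiplying the two operator norm bounds yields $\onorm{dF} \leq 1 + O(\curvabsbnd \rho^2)$, and the same reasoning in the reverse direction gives a corresponding lower bound. Integrating along the straight segment from $v_j(p)$ to $v_i(p)$ (as in \eqref{eq:up.bnd.length}) produces an estimate of the form
\begin{equation*}
  \bigl| \,\norm{v_i(x) - v_j(x)} - \norm{v_i(p) - v_j(p)}\, \bigr| \leq C\,\curvabsbnd \rho^2 \,\norm{v_i(p) - v_j(p)}
\end{equation*}
for some absolute constant $C$ (one works out $C = 21$ under the standing constraint $\rho < \barymapradbnd$).

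I then apply \Lemref{lem:intrinsic.thick.distortion} to $\splxs(p)$ and $\splxs(x)$ with distortion parameter $\metlipconst = C\curvabsbnd \rho^2$. Choosing $\eta \in (0,1)$ and writing $\metlipconst = \eta\thickness{\splxs(p)}^2/4$, the lemma guarantees $\sing{n}{\tilde P} > 0$, hence $\splxs(x)$ is non-degenerate, as long as $C\curvabsbnd \rho^2 < \thickness{\splxs(p)}^2/4$. Rearranging gives the threshold $\thickness{\splxs(p)} > 2\sqrt{C\curvabsbnd}\,\rho$, and tracking the constant $C = 21$ yields the stated bound $10\sqrt{\curvabsbnd}\,\rho$. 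Since this argument works for every $x \in \riemsplxs \subset \bmapball$, \Propref{prop:affine.indep} concludes non-degeneracy.

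The second form of the criterion, involving $\longedge{\riemsplxs}$ rather than the ambient radius $\rho$, follows from the remark at the end of \Secref{sec:bary.map}: if $\longedge{\riemsplxs} < \barymapradbnd$ then $\riemsplxs$ is contained in the open ball of radius $\longedge{\riemsplxs} + \varepsilon$ centred at any vertex, with convex closure, so we may take $\rho = \longedge{\riemsplxs}$ in the first part. The main technical obstacle is just bookkeeping the constant: the two-sided Rauch estimate must be applied to both the exponential and its inverse, the two factors combined, and the resulting coefficient fed through the $\thickness{\splxs(p)}^2/4$ clause of \Lemref{lem:intrinsic.thick.distortion}; the rest is mechanical.
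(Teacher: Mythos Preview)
Your proposal is correct and follows essentially the same route as the paper: bound the differential of the transition $\exp_x^{-1}\circ\exp_p$ via the Rauch estimate (\Lemref{lem:poly.rauch}) to obtain the edge-length distortion bound with $\metlipconst = 21\curvabsbnd\rho^2$, feed this into \Lemref{lem:intrinsic.thick.distortion} to force $\splxs(x)$ non-degenerate whenever $\thickness{\splxs(p)}^2 > 84\curvabsbnd\rho^2$, and conclude with \Propref{prop:affine.indep}; the $\longedge{\riemsplxs}$ version is exactly the remark at the end of \Secref{sec:bary.map}. The constants and the order of the steps match the paper's argument.
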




The last assertion follows from the remark at the end of \Secref{sec:bary.map}: If $\longedge{\riemsplxs} < \rho_0$, then $\riemsplxs$ is contained in a closed ball of radius $\longedge{\riemsplxs}$ centred at one of the vertices.

\begin{remark}
  Using \Propref{prop:exp.trans.bnd} and \Lemref{lem:metric.distort.convex} of \Secref{sec:triangulation}, we can replace \Eqnref{eq:bnd.edge.distort} with
  \begin{equation*}
    \big| \norm{v_i(x) - v_j(x)} - \norm{v_i(p) - v_j(p)} \big|
    \leq 6\curvabsbnd\rho^2 \norm{v_i(p) - v_j(p)} \qquad \text{when } \rho < \frac{1}{2}\barymapradbnd,
  \end{equation*}
  and we find, that the Riemannian simplex $\riemsplxs$ of \Thmref{thm:thick.nondegen.riem} is non-degenerate if
 \begin{equation*}
   \thickness{\splxs(p)} > 5 \sqrt{\curvabsbnd}\rho,
 \end{equation*}
 but with the caveat that $\rho$ must now satisfy $\rho \leq \frac{1}{2}\barymapradbnd$.
\end{remark}

\paragraph{Orientation}

In Euclidean space $\E^n$ we can define an orientation as an equivalence class of frames, two frames being equivalent if the linear transformation between them has positive determinant. We can likewise associate an orientation to a (non-degenerate) Euclidean $n$-simplex $\splxs = \asimplex{v_0, \ldots, v_n}$: it is the orientation associated with the basis $\{(v_i - v_0)\}_{i \in \{1,\ldots,n\}}$. The orientation depends on how we have indexed the points. Any even permutation of the indices yields the same orientation.

In a manifold, we can assign an orientation locally, in a neighbourhood $U \subset \man$ on which the tangent bundle admits a local trivialisation, for example. Then we can define an orientation by defining an orientation on $\tanspace{p}{M}$ for some $p \in U$. If $\splxs = \asimplex{p_0, \ldots, p_n} \subset U$ defines a non-degenerate Riemannian simplex, then we can associate an orientation to that simplex: it is the orientation of $\splxs(p_0) \subset \tanspace{p_0}{\man}$. Again, we will get agreement on the orientation if we perform any even permutation of the vertex indices. The reason is that our non-degeneracy assumption implies that the orientation of $\splxs(p_i)$ will agree with the orientation of $\splxs(p_j)$ for any $i,j \in \{0, \ldots, n\}$.

In the particular case discussed in this section, where $\phi = u \circ \exp_{p}^{-1}$, for $p \in \bmapball$, the barycentric map $\bmap: \splxsE(p) \to \riemsplxs$ is orientation preserving. Since $\exp_x^{-1}\circ \exp_y$ is orientation preserving for any $x,y \in \bmapball$, it is enough to consider the case where $p=p_0 \in \splxs$. Consider \Eqnref{eq:diff.bary.map}:
\begin{equation*}
  (db)_{v_0(p_0)} = - \left(\conn_x\nu \right)^{-1} \partial_u \nu_{v_0(p_0)}.
\end{equation*}
By \Eqnref{eq:matrix.of.deriv}, we have $(\partial_u \nu)_{v_0(p_0)} = - \Id$. Also, it follows from \Lemref{lem:enfct.convex} that $\conn^{\man}\nu$ has positive determinant.  (Buser and Karcher~\cite[p.132]{buser1981} show that $\conn^{\man}\nu$ is bounded near the identity), and thus so must $db$ everywhere, since it does not vanish on its domain.

%

\section{Triangulation criteria}
\label{sec:triangulation}

We are interested in the following scenario. Suppose we have a finite set of points $\pts \subset \man$ in a compact Riemannian manifold, and an (abstract) simplicial complex $\acplx$ whose vertex set is $\pts$, and such that every simplex in $\acplx$ defines a non-degenerate Riemannian simplex. When can we be sure that $\acplx$ triangulates $\man$? Consider a convex ball $\bmapball$ centred at $p \in \pts$. We require that, when lifted to $T_p\man$, the simplices near $p$ triangulate a neighbourhood of the origin. If we require that the simplices be small relative to $\bmaprad$, and triangulate a region extending to near the boundary of the lifted ball, then Riemannian simplices outside of $\bmapball$ cannot have points in common with the simplices near the centre of the ball, and it is relatively easy to establish a triangulation. 

Instead, we aim for finer local control of the geometry. We establish geometric conditions (\Lemref{lem:embed.star}) that ensure that the complex consisting of simplices incident to $p$, (i.e., the star of $p$) is embedded by a given map into the manifold. In order to achieve this result we require a strong constraint on the differential of the map in question. Since we work locally, in a coordinate chart, we consider maps $F: \R^n \supseteq U \to \R^n$. We demand that for some linear isometry $T:\R^n \to \R^n$ we have
\begin{equation}
  \label{eq:strong.diff.bnd}
  \onorm{dF_u - T} \leq \eta,
\end{equation}
for some $0 \leq \eta \leq 1$, and all $u \in U$. This is stronger than the kind of bounds found, for example, in the Rauch theorem (\Lemref{lem:poly.rauch}), which have the form
\begin{equation}
  \label{eq:bound.sing.vals}
  (1-\eta)\norm{w} \leq \norm{dF_uw} \leq (1 + \eta)\norm{w}.
\end{equation}
Whereas \eqref{eq:bound.sing.vals} implies that $dF_u$ is close to a linear isometry at every $u\in U$, \Eqnref{eq:strong.diff.bnd} means that $dF_u$ is close to \emph{the same} linear isometry for all $u \in U$. 

Using a local constraint of the form~\eqref{eq:strong.diff.bnd} to establish the embedding of vertex stars, we demonstrate, in \Secref{sec:generic.tri}, generic criteria which ensure that a map from a simplicial complex to a Riemannian manifold is a homeomorphism. We then turn our attention to the specific case where the map in question is the barycentric coordinate map on each simplex. 

Using a refinement of the Rauch theorem established by Buser and Karcher~\cite{buser1981} we show, in \Secref{sec:diff.exp.trans}, that transition functions arising from the exponential map (\Eqnref{eq:exp.transition}) are subject to bounds of the form~\eqref{eq:strong.diff.bnd}.  Then in \Secref{sec:triang.riem.splxs} we observe that the barycentric coordinate map is also subjected to such bounds, and thus yield \Thmref{thm:triangulation} as a particular case of the generic triangulation criteria.

\subsection{Generic triangulation criteria}
\label{sec:generic.tri}

\label{sec:smooth}

We say that a map $F: \rem \to \ren$ is \defn{smooth} if it is of class $\cinfty$.  If $A \subset \rem$, then $F: A \to \ren$ is \defn{smooth} on $A$ if $F$ can be extended to a function that is smooth in an open neighbourhood of $A$. (I.e., there exists an open neighbourhood $U \in \rem$ and a smooth map $\tilde{F}: U \to \ren$ such that $A \subseteq U$, and $\tilde{F}|_A = F$.)  This definition is independent of the ambient space $\rem$ containing $A$. In particular, if $A \subseteq \R^k \subseteq \rem$, then the smoothness of $F$ does not depend on whether we consider $A$ to be a subset of $\R^k$ or of $\rem$. In the case that $A$ is the closure of a non-empty open set, continuity of the partial derivatives implies that they are well defined on all of $A$ and independent of the chosen extension. See Munkres~\cite[\S 1]{munkres1968} for more details. 

For our purposes, we are interested in smooth maps from non-degenerate closed Euclidean simplices of dimension $n$ into an $n$-dimensional manifold $\man$. We will work within coordinate charts, so our primary focus will be on maps of the form
\begin{equation*}
  F: \splxsE^n \to \R^n,
\end{equation*}
such that \Eqnref{eq:strong.diff.bnd} is satisfied for all $u \in \splxsE^n$. As an example of how we can exploit this bound, we observe that a map satisfying \Eqnref{eq:strong.diff.bnd} is necessarily an embedding with bounded metric distortion if its domain is convex:
\begin{lem}
  \label{lem:metric.distort.convex}
  Suppose $A \subset \ren$ is convex, and
  $F: A \to \ren$ is a smooth map such that, for some non-negative $\eta <1$,
  \begin{equation*}
    \onorm{dF_u - T} \leq \eta,
  \end{equation*}
  for all $u \in A$, and some linear isometry $T:\ren \to \ren$.
  Then 
  \begin{equation*}
    \abs{\,  \norm{F(u) - F(v)} - \norm{u - v} \,} \leq \eta \norm{u - v}
    \quad \text{for all } u,v \in A.
  \end{equation*}
\end{lem}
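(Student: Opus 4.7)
The plan is to use the fundamental theorem of calculus along the straight line segment from $u$ to $v$, which lies entirely in $A$ by the convexity hypothesis. This lets us represent the difference $F(v) - F(u)$ as an integral of $dF$, and then subtract off the constant isometry $T$ to exploit the hypothesis $\onorm{dF_w - T} \leq \eta$.

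First I would write
\begin{equation*}
  F(v) - F(u) = \int_0^1 dF_{u + s(v-u)}(v-u)\, ds,
\end{equation*}
which is valid since $F$ is smooth on the convex set $A$ and the segment $\{u + s(v-u) : s \in [0,1]\}$ is contained in $A$. Since $T$ is linear, $T(v-u) = \int_0^1 T(v-u)\, ds$, so subtracting gives
\begin{equation*}
  F(v) - F(u) - T(v-u) = \int_0^1 \bigl( dF_{u + s(v-u)} - T \bigr)(v-u)\, ds.
\end{equation*}
Taking norms and applying the triangle inequality for integrals together with the hypothesis yields
\begin{equation*}
  \norm{F(v) - F(u) - T(v-u)} \leq \int_0^1 \onorm{dF_{u + s(v-u)} - T}\, \norm{v-u}\, ds \leq \eta \norm{v-u}.
\end{equation*}

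To finish, I would apply the reverse triangle inequality to the vectors $F(v) - F(u)$ and $T(v-u)$, obtaining
\begin{equation*}
  \bigl| \norm{F(v) - F(u)} - \norm{T(v-u)} \bigr| \leq \norm{F(v) - F(u) - T(v-u)} \leq \eta \norm{v-u}.
\end{equation*}
Since $T$ is a linear isometry, $\norm{T(v-u)} = \norm{v-u}$, which gives the desired conclusion.

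There is essentially no obstacle here: the entire argument is a one-line application of the mean value theorem in integral form followed by the reverse triangle inequality. The only subtlety worth noting is that convexity of $A$ is used in an essential way to guarantee that the line segment (and hence the integrand) stays in $A$; without it the pointwise bound on $dF$ would not translate into a global bound on length distortion. The smoothness of $F$ ensures that the integrand is well defined and continuous, so the fundamental theorem of calculus applies directly.
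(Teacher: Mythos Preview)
Your proof is correct. Both you and the paper integrate $dF$ along the segment $[u,v]\subset A$, but the arguments diverge after that. The paper first reduces to $T=\Id$ by composing with $T^{-1}$, and then establishes the upper and lower bounds separately: for the lower bound it projects onto $\hat u=(u-v)/\norm{u-v}$ and uses $\dotprod{dF(u-v)}{\hat u}\geq(1-\eta)\norm{u-v}$, and for the upper bound it projects onto $\hat w=(F(u)-F(v))/\norm{F(u)-F(v)}$. Your approach instead bounds the single quantity $\norm{F(v)-F(u)-T(v-u)}$ directly and extracts both inequalities at once via the reverse triangle inequality and $\norm{T(v-u)}=\norm{v-u}$. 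Your route is shorter and avoids the reduction to $T=\Id$ and the case split; the paper's projection argument, on the other hand, foreshadows the technique used later in \Lemref{lem:embed.star}, where one integrates the differential of an angle rather than a direction.
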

\begin{proof}
  We observe that it is sufficient to consider the case $T = \Id$, because if $\tilde{F} = T^{-1} \circ F$, then $\onorm{d\tilde{F} - \Id} = \onorm{dF - T}$, and $\norm{\tilde{F}(u) - \tilde{F}(v)} = \norm{F(u) - F(v)}$.

  Assume $u\neq v$. For the lower bound we consider the unit vector $\hat{u} = \frac{u-v}{\norm{u-v}}$, and observe that
  \begin{equation*}
    \dotprod{dF(u-v)}{\hat{u}} \geq (1-\eta)\norm{u-v} > 0,
  \end{equation*}
  so, by integrating along the segment $\seg{u}{v}$, we find
  \begin{equation*}
    \norm{F(u)- F(v)} \geq \dotprod{\big(F(u) - F(v) \big)}{\hat{u}} \geq (1-\eta) \norm{u-v}.
  \end{equation*}
  For the upper bound we employ the unit vector $\hat{w} = \frac{F(u) - F(v)}{\norm{F(u) - F(v)}}$:
  \begin{equation*}
    \dotprod{ \big( F(u)-F(v) \big)}{\hat{w}} = \norm{F(u) - F(v)} \leq (1 + \eta) \dotprod{(u-v)}{\hat{w}} \leq (1 + \eta) \norm{u-v}.
  \end{equation*}
\end{proof}

For our purposes we will be free to choose a coordinate system so that $F$ keeps a vertex fixed. We will have use for the following observation, which can be demonstrated with an argument similar to the proof of \Lemref{lem:metric.distort.convex}:
\begin{lem}
  \label{lem:bound.disp}
  If $A \subseteq \ren$ is a convex set and $F: A \to \ren$ is a smooth map with a fixed point $p \in A$ and 
  \begin{equation*}
    \onorm{dF_u - \Id} \leq \eta, \qquad \text{for all } u \in A,
  \end{equation*}
  then
  \begin{equation*}
    \norm{F(u) - u } \leq \eta{\norm{u-p}} \qquad \text{for all } u \in A.
  \end{equation*}
\end{lem}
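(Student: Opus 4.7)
The plan is to mimic the proof of \Lemref{lem:metric.distort.convex}, but applied to the ``displacement'' map $G(u) = F(u) - u$ rather than to $F$ itself. First I would observe that by hypothesis $G$ is smooth on the convex set $A$, satisfies $G(p) = F(p) - p = 0$, and its differential is
\begin{equation*}
dG_u = dF_u - \Id,
\end{equation*}
so the hypothesis gives $\onorm{dG_u} \leq \eta$ for every $u \in A$.

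Next, given any $u \in A$, convexity of $A$ guarantees that the segment $\seg{p}{u} = \{p + t(u - p) : t \in [0,1]\}$ is contained in $A$, so $G$ is defined and smooth along it. I would then write
\begin{equation*}
F(u) - u = G(u) = G(u) - G(p) = \int_0^1 dG_{p + t(u-p)}(u-p)\, dt,
\end{equation*}
and estimate the norm by pulling the norm inside the integral and applying the operator norm bound on $dG$. This immediately yields
\begin{equation*}
\norm{F(u) - u} \leq \int_0^1 \onorm{dG_{p + t(u-p)}}\, \norm{u - p}\, dt \leq \eta \norm{u - p},
\end{equation*}
as required.

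There is no real obstacle here: the argument is a one-line application of the fundamental theorem of calculus along the segment from $p$ to $u$, exactly parallel to the lower-bound half of the proof of \Lemref{lem:metric.distort.convex}. The only subtlety worth mentioning is that smoothness of $F$ on the possibly non-open set $A$ is interpreted in the sense defined in \Secref{sec:smooth} (extension to an open neighbourhood), which is what legitimises differentiating along the segment and applying the fundamental theorem of calculus even when $u$ or intermediate points lie on the boundary of $A$.
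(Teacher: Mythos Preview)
Your proof is correct and is precisely the kind of argument the paper has in mind: the paper does not actually spell out a proof of this lemma, but states that it ``can be demonstrated with an argument similar to the proof of \Lemref{lem:metric.distort.convex}'', namely by integrating the differential along the segment $\seg{p}{u}$ in the convex domain. Your slight repackaging via $G = F - \Id$ is clean and, if anything, makes the estimate more transparent than adapting the directional-projection trick used in \Lemref{lem:metric.distort.convex}.
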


\paragraph{Embedding complexes}
\label{sec:embed.cplx}

In preparation for considering triangulations we first consider the problem of mappings of complexes into $\ren$.

A simplicial complex $\acplx$ is a set of abstract simplices such that if $\splxs \in \acplx$, then $\splxt \in \acplx$ for every face $\splxt \subset \splxs$. We will only consider finite simplicial complexes. A subcomplex of $\acplx$ is a subset that is also a simplicial complex. The \defn{star} of a simplex $\splxs \in \acplx$ is the smallest subcomplex of $\acplx$ consisting of all simplices that have $\splxs$ as a face, and is denoted $\str{\splxs}$.
In particular, if $p$ is a vertex of $\acplx$, then $\str{p}$ is the set of simplices that contain $p$, together with the faces of these simplices.

The \defn{carrier} (``geometric realisation'') of $\acplx$ is denoted $\carrier{\acplx}$.  We are interested in complexes endowed with a \defn{piecewise flat metric}. This is a metric on $\carrier{\acplx}$ that can be realised by assigning lengths to the edges in $\acplx$ such that each simplex $\splxs \in \acplx$ is associated with a Euclidean simplex $\splxsE \subset \carrier{\acplx}$ that has the prescribed edge lengths. Certain constraints on the edge lengths must be met in order to define a valid piecewise flat metric, but for our current purposes we will have a metric inherited from an embedding in Euclidean space.

We say that $\acplx$ is \defn{embedded} in $\ren$ if the vertices lie in $\ren$ and the convex hulls of the simplices in $\acplx$ define a geometric simplicial complex. In other words, to each $\splxs,\splxt \in \acplx$ we associate $\splxsE = \convhull{\splxs}$, $\splxtE = \convhull{\splxt}$, and we have $\splxsE \cap \splxtE =  \convhull{\splxs \cap \splxt}$. The topological boundary of a set $B \subset \ren$ is denoted by $\bdry{B}$, and its topological interior by $\intr{B}$. If $\acplx$ is embedded in $\R^n$, and $p$ is a vertex of $\acplx$, we say that $\str{p}$ is a \defn{full star} if $p \in \intr{\carrier{\str{p}}}$.

The \defn{scale} of $\acplx$ is an upper bound on the length of the longest edge in $\acplx$, and is denoted by $\scale$. 
We say that $\acplx$ is $\thickbnd$-thick if each simplex in $\acplx$ has thickness greater than $\thickbnd$.  The \defn{dimension} of $\acplx$ is the largest dimension of the simplices in $\acplx$. We call a complex of dimension $n$ an \defn{$n$-complex}. If every simplex in $\acplx$ is the face of an $n$-simplex, then $\acplx$ is a \defn{pure $n$-complex}.

A map $F: \carrier{\acplx} \to \ren$ is \defn{smooth on $\acplx$} if for each $\splxs \in \acplx$ the restriction $F \big|_{\splxsE}$ is smooth. This means that $d(F\big|_{\splxsE})$ is well defined, and even though $dF$ is not well defined, we will use this symbol when the particular restriction employed is either evident or unimportant. When the underlying complex on which $F$ is smooth is unimportant, we simply say that $F$ is \defn{piecewise smooth}.

$F$ is \defn{piecewise linear} if its restriction to each simplex is an affine map. The \defn{secant map} of $F$ is the piecewise linear map defined by the restriction of $F$ to the vertices of $\acplx$.

We are interested in conditions that ensure that $F: \carrier{\acplx} \to \ren$ is a topological embedding. Our primary concern is with the behaviour of the boundary. The reason for this is captured by the following variation of a lemma by Whitney~\cite[Lem AII.15a]{whitney1957}:
\begin{lem}[Whitney]
  \label{lem:whitney.cplx.embed}
  Let $\acplx$ be a (finite) simplicial complex embedded in $\ren$ such that $\intr{\carrier{\acplx}}$ is non-empty and connected, and $\bdry{\carrier{\acplx}}$ is a compact $(n-1)$-manifold. Suppose $F: \carrier{\acplx} \to \ren$ is smooth on $\acplx$ and such that $\det d(F\big|_{\splxsE}) > 0$ for each $n$-simplex $\splxs \in \acplx$. If the restriction of $F$ to $\bdry{\carrier{\acplx}}$ is an embedding, then $F$ is a topological embedding.
\end{lem}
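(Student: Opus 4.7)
The plan is to combine a local openness argument, using the positive Jacobian hypothesis, with the Jordan--Brouwer separation theorem applied to the embedded boundary image, and then conclude injectivity via a multiplicity count.

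First, I would verify that $F$ is an open mapping on $\intr{\carrier{\acplx}}$. For a point $x$ in the interior of an $n$-simplex $\splxs$, this is immediate from the inverse function theorem applied to the smooth restriction $F|_{\splxsE}$, since $\det d(F|_{\splxsE}) > 0$. For $x$ on an internal face of dimension less than $n$, each $n$-simplex incident to $x$ contributes by the same argument an open piece of $F$'s image near $F(x)$, so $F(\intr{\carrier{\acplx}})$ is open in $\ren$. An important consequence to extract here is that $F(\intr{\carrier{\acplx}}) \cap F(\bdry{\carrier{\acplx}}) = \emptyset$: if a boundary point $x'$ and an interior point $x$ had a common image, then the local two-sided separation property of the embedded hypersurface $F(\bdry{\carrier{\acplx}})$ near $F(x')$ would be inconsistent with the full open neighbourhood of $F(x)$ contained in $F(\intr{\carrier{\acplx}})$.

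Next, for $y \in \ren \setminus F(\bdry{\carrier{\acplx}})$, I would define the multiplicity $N(y) = \#F^{-1}(y)$. All preimages lie in $\intr{\carrier{\acplx}}$ by the above, and finiteness follows from compactness of $\carrier{\acplx}$ together with local injectivity. For $y$ outside the nowhere dense image of the $(n-1)$-skeleton of $\acplx$, $N$ is locally constant by the inverse function theorem. Extending $N$ to a locally constant $\mathbb{Z}_{\geq 0}$-valued function on all of $\ren\setminus F(\bdry{\carrier{\acplx}})$ uses the piecewise-smooth structure: when $y$ crosses the image of an internal $(n-1)$-face, the two preimages contributed by the adjacent $n$-simplices merge into a single preimage lying on the face, so the count is stable across such crossings. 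Since $F(\carrier{\acplx})$ is compact, $N \equiv 0$ on the unbounded component of $\ren\setminus F(\bdry{\carrier{\acplx}})$.

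The decisive input is Jordan--Brouwer: since $F|_{\bdry{\carrier{\acplx}}}$ is an embedding of a compact $(n-1)$-manifold, its image is a closed embedded hypersurface that separates $\ren$ into finitely many components with exactly one unbounded. A degree or winding-number argument, exploiting the positivity of $\det dF$ and the orientability of $\carrier{\acplx}$ as an $n$-manifold-with-boundary inherited from its embedding in $\ren$, identifies $N$ on each bounded component with the winding number of the cycle $F(\bdry{\carrier{\acplx}})$ about that component, which is at most $1$ for an embedded closed hypersurface. The main obstacle will be this bound $N \leq 1$, as it is where the boundary embedding hypothesis is really used and where orientations must be tracked carefully. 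Once it is in hand, $N \leq 1$ together with connectedness of $\intr{\carrier{\acplx}}$ gives injectivity of $F$ on the interior; combined with the assumed injectivity on the boundary and the disjointness already established, this yields global injectivity of $F$, and since $F$ is a continuous injection from a compact space into a Hausdorff space, it is a topological embedding.
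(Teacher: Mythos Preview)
Your overall architecture---local homeomorphism on the interior, Jordan--Brouwer separation, and a locally constant multiplicity count---matches the paper's. But two steps diverge, and in both the paper takes a shorter path.

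First, your disjointness claim $F(\intr{\carrier{\acplx}}) \cap F(\bdry{\carrier{\acplx}}) = \emptyset$ via ``local two-sided separation would be inconsistent with a full open neighbourhood in $F(\intr{\carrier{\acplx}})$'' is not a valid argument as stated: an open set may perfectly well straddle both sides of an embedded hypersurface. The paper derives disjointness \emph{after} Jordan--Brouwer rather than before. Once you know the complement of $F(\bdry{\carrier{\acplx}})$ has a unique bounded component, the set $F(\intr{\carrier{\acplx}}) \setminus F(\bdry{\carrier{\acplx}})$ is open, bounded, and has topological boundary inside $F(\bdry{\carrier{\acplx}})$, so it coincides with that bounded component; openness of $F(\intr{\carrier{\acplx}})$ then forces disjointness (a point of $F(\bdry{\carrier{\acplx}})$ in $F(\intr{\carrier{\acplx}})$ would have a neighbourhood meeting the unbounded component).

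Second, and more significantly, the paper avoids the winding-number machinery altogether. Instead of identifying $N$ with a degree and invoking that embedded closed hypersurfaces have winding number at most $1$---which you rightly flag as the main obstacle and do not actually prove---the paper simply \emph{exhibits} a point with $N=1$. Near the image $y=F(x)$ of a point $x$ in the relative interior of a boundary $(n-1)$-simplex $\splxs^{n-1}$, one can choose a neighbourhood meeting the image of no other simplex of dimension $\leq n-1$; then its preimage lies in the unique $n$-simplex incident to $\splxs^{n-1}$, so every nearby interior image point has exactly one preimage. Since the bounded component is connected and $N$ is locally constant on it, $N\equiv 1$ follows. This replaces your orientation and degree bookkeeping with a direct one-line observation.
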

\begin{proof}
  The assumptions on $\intr{\carrier{\acplx}}$ and $\bdry{\carrier{\acplx}}$ imply that $\acplx$ is a pure $n$-complex, and that each $(n-1)$ simplex is either a boundary simplex, or the face of exactly two $n$-simplices.  Whitney showed \cite[Lem AII.15a]{whitney1957} that any $x \in \intr{\carrier{\acplx}}$ admits an open neighbourhood $U \subset \intr{\carrier{\acplx}}$ such that the restriction of $F$ to $U$ is a homeomorphism. In particular, $F(\intr{\carrier{\acplx}})$ is open.

  By the Jordan-Brouwer separation theorem~\cite[\S IV.7]{outerelo2009degree}, $\R^n \setminus F(\bdry{\carrier{\acplx}})$ consists of two open components, one of which is bounded. Since $F(\carrier{\acplx})$ is compact, $F(\intr{\carrier{\acplx}})$ must coincide with the bounded component, and in particular $F(\intr{\carrier{\acplx}}) \cap F(\bdry{\carrier{\acplx}}) = \emptyset$, so $F(\intr{\carrier{\acplx}})$ is a single connected component.

  We need to show that $F$ is injective. First we observe that the set of points in $F(\intr{\carrier{\acplx}})$ that have exaclty one point in the preimage is non-empty. It suffices to look in a neigbhourhood of a point $y \in F(\bdry{\carrier{\acplx}})$. Choose $y = F(x)$, where $x$ is in the relative interior of $\splxsE^{n-1} \subset \bdry{\carrier{\acplx}}$. Then there is a neighbourhood $V$ of $y$ such that $V$ does not intersect the image of any other simplex of dimension less than or equal to $n-1$. Let $\splxs^n$ be the unique $n$-simplex that has $\splxs^{n-1}$ as a face. Then $F^{-1}(V \cap F(\carrier{\acplx}) \subset \splxsE^n$, and it follows that every point in $V \cap \intr{\carrier{\acplx}}$ has a unique point in its image.

  Now the injectivity of $F$ follows from the fact that the number of points in the preimage is locally constant on $F(\intr{\carrier{\acplx}}) \setminus F(\bdry{\carrier{\acplx}})$, which in our case is connected. This is a standard argument in degree theory~\cite[Prop. IV.1.2]{outerelo2009degree}: A point $a \in F(\intr{\carrier{\acplx}})$ has $k$ points $\{x_1, \ldots, x_k\}$ in its preimage. There is a neighbourood $V$ of $a$ and disjoint neighbourhoods $U_i$ of $x_i$ such that $F|_{U_i}: U_i \to V$ is a homeomorphism for each $i \in \{1,\ldots,k\}$. It follows that the number of points in the preimage is $k$ for every point in the open neighbourhood of $a$ defined as
  \begin{equation*}
    W = V \setminus  F(\carrier{\acplx} \setminus \bigcup_i U_i).
  \end{equation*}
\end{proof}

\begin{lem}[Embedding a star]
  \label{lem:embed.star}
  Suppose $\acplx = \str{p}$ is a $\thickbnd$-thick, pure $n$-complex embedded in $\ren$ such that all of the $n$-simplices are incident to a single vertex, $p$, and $p \in \intr{\carrier{\acplx}}$ (i.e., $\str{p}$ is a full star). If $F: \carrier{\acplx} \to \ren$ is smooth on $\acplx$, 
  and satisfies 
  \begin{equation*}
    \onorm{dF - \Id}  < n\thickbnd
  \end{equation*}
  on each $n$-simplex of $\acplx$, then $F$ is an embedding.
\end{lem}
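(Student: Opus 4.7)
The plan is to invoke \Lemref{lem:whitney.cplx.embed}, which requires (a) $\det d(F|_{\splxsE}) > 0$ for each $n$-simplex $\splxs \in \acplx$, and (b) $F|_{\bdry{\carrier{\acplx}}}$ is an embedding. Precomposing with a translation, I may assume $F(p) = p$. On each $n$-simplex $\splxsE \in \acplx$ (which is convex and has $p$ as a vertex), \Lemref{lem:bound.disp} applied with $\eta = \sup_{\splxsE}\onorm{dF - \Id} < n\thickbnd$ yields
\begin{equation*}
  \norm{F(u) - u} < n\thickbnd \norm{u - p}, \qquad u \in \carrier{\acplx}.
\end{equation*}
Since $n\thickbnd \leq 1$, the straight path $t \mapsto \Id + t(dF - \Id)$ stays inside the open ball of radius $1$ about $\Id$ in the operator norm, hence in the $\Id$-component of $GL_n(\R)$; this gives (a).

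For (b) I would exploit the full-star structure. Because $p \in \intr{\carrier{\str{p}}}$, the boundary $\bdry{\carrier{\acplx}}$ is a piecewise-linear $(n-1)$-sphere that is starshaped about $p$, and the radial retraction $\sigma : \R^n \setminus \{p\} \to \bdry{\carrier{\acplx}}$ sending $x$ to the unique point of $\bdry{\carrier{\acplx}}$ on the open ray $p + \R_{>0}(x-p)$ is continuous. The displacement bound gives $F(u) \neq p$ for $u \neq p$, so $\tilde{F} := \sigma \circ F|_{\bdry{\carrier{\acplx}}} : \bdry{\carrier{\acplx}} \to \bdry{\carrier{\acplx}}$ is well defined and continuous. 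The straight-line interpolation $(1-t)u + tF(u) = (u-p) + t(F(u) - u) + p$ has distance from $p$ at least $(1 - n\thickbnd)\norm{u - p} > 0$, so $H_t(u) := \sigma((1-t)u + tF(u))$ is a continuous homotopy from $\Id_{\bdry{\carrier{\acplx}}}$ to $\tilde{F}$. In particular $\tilde{F}$ has degree $1$.

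The crux is to upgrade $\tilde{F}$ to a homeomorphism by showing it is a local homeomorphism. On the relative interior of a boundary facet $\splxt$, the map $F$ is a smooth immersion (its differential has rank $n$ and is $n\thickbnd$-close to $\Id$), while $\sigma$ is smooth with differential of rank $n-1$ whose kernel is the radial direction from $p$; the composition is a local diffeomorphism $\splxt \to \splxt$ because $dF$, being so close to $\Id$, cannot carry a tangent vector of $\splxt$ onto the radial direction through $F(u)$. At the lower-dimensional skeleton of $\bdry{\carrier{\acplx}}$, the local homeomorphisms on neighbouring facets are glued using the continuity of $\sigma$ and the smallness of $F(u) - u$ relative to the altitudes $\splxalt{p}{\splxs} \geq n\thickbnd \glongedge(\splxs)$. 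A degree-$1$ local homeomorphism between compact connected $(n-1)$-manifolds is a homeomorphism (it is a finite-sheeted covering, and degree one forces a single sheet), so $\tilde{F}$ is a homeomorphism. Then $F|_{\bdry{\carrier{\acplx}}}$ is injective, for $F(u_1) = F(u_2)$ forces $\tilde{F}(u_1) = \tilde{F}(u_2)$ and hence $u_1 = u_2$; combined with compactness this gives (b), and \Lemref{lem:whitney.cplx.embed} finishes the proof.

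The main obstacle is the local-homeomorphism verification at the singular strata of $\bdry{\carrier{\acplx}}$, where several facets meet and the outward radial direction jumps; this will require a careful piecewise gluing that uses the altitude-controlled smallness of $F(u) - u$ to prevent the images of neighbouring facets under $\tilde{F}$ from overlapping. Everything else is routine bookkeeping.
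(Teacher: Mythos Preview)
Your overall strategy---reduce to \Lemref{lem:whitney.cplx.embed}, then verify (a) positivity of the determinant and (b) injectivity on the boundary---matches the paper's, and your handling of (a) is fine. The divergence is in (b): you try to prove injectivity of $F|_{\bdry{\carrier{\acplx}}}$ by showing that the radial composite $\tilde F = \sigma\circ F|_{\bdry{\carrier\acplx}}$ is a degree-one local homeomorphism of the $(n-1)$-sphere, hence a homeomorphism. The degree-one part is correct; the gap is exactly where you locate it yourself. You assert that the local homeomorphism property ``glues'' across the lower-dimensional skeleton ``using the continuity of $\sigma$ and the smallness of $F(u)-u$ relative to the altitudes,'' but this is not an argument. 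At a skeleton point $\sigma$ is not differentiable, the images of neighbouring facets under $\tilde F$ need not respect the stratification, and nothing you have written prevents two nearby boundary points lying in distinct facets from being sent to the same ray through $p$. A degree-one self-map of a sphere that is a local diffeomorphism on an open dense set can still fail to be locally injective on the complement (already on $S^1$: any smooth degree-one map with two critical points), so the skeleton case cannot be waved away.

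Filling this gap essentially forces you back to the paper's idea. What is really needed is: for distinct $x,y\in\bdry{\carrier\acplx}$, the vectors $F(x)-p$ and $F(y)-p$ are not positively proportional. The paper proves precisely this, directly and globally, with no local-to-global patching: choose an angle coordinate $\phi$ in the $2$-plane through $p,x,y$, radially project the segment $[x,y]$ onto a piecewise-linear path $\alpha$ in $\bdry{\carrier\acplx}$, and show that $\phi\circ F\circ\alpha$ is strictly increasing. The derivative estimate uses exactly the quantitative input you already cite---the radial direction meets each boundary facet at an angle $\theta$ with $\sin\theta\ge n\thickbnd$, while $dF$ rotates vectors by an angle whose sine is less than $n\thickbnd$---but packaged so that the skeleton causes no difficulty: the path $\alpha$ simply passes from one facet to the next, and the inequality $d\phi(dF\alpha')>0$ holds on each piece. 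This is both shorter and complete.
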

\begin{proof}
  If $\carrier{\acplx}$ is convex, then the claim follows immediately from \Lemref{lem:metric.distort.convex}.

  From the definition of thickness, we observe that $n\thickbnd \leq 1$, and therefore $\onorm{dF} > 0$.  By \Lemref{lem:whitney.cplx.embed}, it suffices to consider points $x,y \in \bdry{\carrier{\acplx}}$. Rather than integrating the differential of a direction, as we did implicitly in the proof of \Lemref{lem:metric.distort.convex}, we will integrate the differential of an angle.

  Let $Q$ be the $2$-dimensional plane defined by $p$, $x$, and $y$. We define the angle function $\phi: \ren \to \R$ as follows: For $z \in \ren$, let $\check{z}$ be the orthogonal projection of $z$ into $Q$. Then $\phi(z)$ is the angle that $\check{z}-p$ makes with $x-p$, where the orientation is chosen so that $\phi(y) < \pi$. (We can assume that $x$ and $y$ are not colinear with $p$, since in that case $[x,y]$ must be contained in $\carrier{\acplx}$, and the arguments of \Lemref{lem:metric.distort.convex} ensure that we would not have $F(x)=F(y)$.)

  Let $\alpha$ be the piecewise linear curve obtained by projecting the segment $[x,y] \subset Q$ onto $\bdry{\carrier{\acplx}}$ via the radial rays emanating from $p$. Parameterise $\alpha$ by the angle $\phi$, i.e., by the arc between $\frac{x-p}{\norm{x-p}}$ and $\frac{y-p}{\norm{y-p}}$ on the unit circle in $Q$. Then $d\phi(\alpha') = 1$, and we have
  \begin{align*}
    \phi(F(y)) - \phi(F(x)) &= \int_{F(\alpha)}d\phi = \int_0^{\phi(y)} d\phi(dF\alpha'(s)) \, ds.
  \end{align*}
  We will show that $d\phi(dF\alpha'(s)) > 0$; it follows that $\phi(F(y)) > \phi(F(x))$ and hence $F(y) \neq F(x)$.

  We need an observation about thick simplices: Suppose that $p$ is a vertex of a $\thickbnd$-thick $n$-simplex $\splxsE$, and $\splxtE$ is the facet opposite $p$. Then a line $r$ through $p$ and $\splxtE$ makes an angle $\theta$ with $\splxtE$ that is bounded by
  \begin{equation*}
    \sin \theta \geq n\thickbnd.
  \end{equation*}
  Indeed, the altitude of $p$ satisfies $\gsplxalt_{p} \geq n\thickbnd \glongedge$ by the definition of thickness, and the distance between $p$ and the point of intersection of $r$ with $\splxtE$ is less than $\glongedge$.

  If $\alpha(s)$ is the point of intersection of $r$ with $\splxtE$, we observe that $d\phi(\alpha'(s)) = \norm{\alpha'(s)}\sin \theta$, i.e., the magnitude of the component of $\alpha'(s)$ orthogonal to $r$. The angle $\vartheta$ between $dF\alpha'(s)$ and $\alpha'(s)$ satisfies $\sin \vartheta \leq \eta < n\thickbnd$. Therefore
  \begin{align*}
    d\phi(dF\alpha'(s))
    &\geq \norm{dF\alpha'(s)}(\sin \theta - \sin \vartheta)\\
    &\geq \norm{dF\alpha'(s)}(n\thickbnd - \eta)\\
    &> 0.
  \end{align*}
\end{proof}

\paragraph{Triangulations}

A simplicial complex $\acplx$ is a  \defn{manifold simplicial complex} if $\carrier{\acplx}$ is a topological manifold (without boundary). 
A \defn{triangulation} of a manifold $\man$ is a homeomorphism $H: \carrier{\acplx} \to \man$, where $\acplx$ is a simplicial complex. 
If $\man$ is a differentiable manifold, then $H$ is a \defn{smooth triangulation} if it is smooth on $\acplx$, i.e., the restriction of $H$ to any simplex in $\acplx$ is smooth.  We are concerned with smooth triangulations of compact Riemannian manifolds (without boundary).

Our homeomorphism argument is based on 
the following observation:
\begin{lem}
  Let $\acplx$ be a manifold simplicial complex of dimension $n$ with finite vertex set $\pts$, and let $\man$ be a compact $n$-manifold.  Suppose $H: \carrier{\acplx} \to \man$ is such that for each $p \in \pts$, $H \big|_{\carrier{\str{p}}}: \carrier{\str{p}} \to \man$ is an embedding. If for each connected component $\man_i$ of $\man$ there is a point $y \in \man_i$ such that $h^{-1}(y)$ contains exactly one point in $\carrier{\acplx}$, then $H$ is a homeomorphism.
\end{lem}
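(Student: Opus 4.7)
The plan is to show that $H$ is a covering map of degree one onto each connected component of $\man$, from which it follows that $H$ is a homeomorphism. The three main stages are (i) $H$ is a local homeomorphism, (ii) compactness of $\carrier{\acplx}$ promotes this to a covering map onto its image, and (iii) the hypothesis on unique preimages pins the covering degree to one on every component.

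First I would establish that $H$ is a local homeomorphism. Given any $x \in \carrier{\acplx}$, pick a vertex $p$ such that $x$ lies in the open star of $p$ (i.e.\ the union of the relative interiors of simplices containing $p$, which is open in $\carrier{\acplx}$ and contained in $\carrier{\str{p}}$). Since $H\big|_{\carrier{\str{p}}}$ is an embedding by hypothesis, its restriction to the open star is continuous and injective. Because $\acplx$ is a manifold simplicial complex of dimension $n$, the open star is an open topological $n$-manifold, and $\man$ is likewise an $n$-manifold. By invariance of domain this restriction is an open map, hence a homeomorphism onto its image, so $H$ is a local homeomorphism at $x$.

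Next, since $\carrier{\acplx}$ is a finite union of compact simplices it is compact, so $H$ is proper. A proper local homeomorphism into a Hausdorff space is a covering map onto its image. The image $H(\carrier{\acplx})$ is compact, hence closed in $\man$, and open since $H$ is a local homeomorphism; it is therefore a union of connected components of $\man$. By hypothesis each component $\man_i$ contains some point $y$ with $H^{-1}(y)$ non-empty (a singleton, in fact), so $H$ is surjective and thus a covering map onto the whole of $\man$.

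Finally, on each connected component $\man_i$ the cardinality of the fibre $H^{-1}(y)$ is locally constant in $y$, and therefore constant. The hypothesis supplies at least one $y \in \man_i$ for which this cardinality equals one, so the covering has degree one over $\man_i$. Consequently $H$ is globally injective, a bijective local homeomorphism, and hence a homeomorphism. The only delicate step is the first one: the appeal to invariance of domain relies on $\carrier{\acplx}$ being a genuine topological $n$-manifold (as built into the hypothesis that $\acplx$ is a manifold simplicial complex), which is what prevents pathological local behaviour at lower-dimensional faces.
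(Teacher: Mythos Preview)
Your proof is correct and follows essentially the same route as the paper: establish that $H$ is a local homeomorphism via the star embeddings and invariance of domain, use compactness to get a covering map, and then use the single-preimage hypothesis to force degree one on each component. Your version is in fact more carefully justified than the paper's (you spell out the role of open stars and properness, and you handle surjectivity onto possibly disconnected $\man$ more explicitly), but the underlying argument is identical.
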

\begin{proof}
  The requirement that the star of each vertex be embedded means that $H$ is locally a homeomorphism, so it suffices to observe that it is bijective. It is surjective by Brouwer's invariance of domain; thus $H$ is a covering map. The requirement that each component of $\man$ has a point with a single point in its pre-image implies that $H: \carrier{\acplx} \to \man$ is a single-sheeted covering, and therefore a homeomorphism.
\end{proof}

The following proposition generically models the situation we will work with when we describe a triangulation by Riemannian simplices:
\begin{prop}[Triangulation]
  \label{prop:generic.triangulation}
  Let $\acplx$ be a  manifold simplicial $n$-complex with finite vertex set $\pts$, and $\man$ a compact Riemannian manifold with an atlas $\{(W_p,\phi_p)\}_{p \in \pts}$ indexed by $\pts$. Suppose 
  \begin{equation*}
    H:\carrier{\acplx} \to \man
  \end{equation*}
  satisfies:
  \begin{enumerate}
  \item For each $p \in \pts$ the secant map of $\phi_p \circ H$ restricted to $\carrier{\str{p}}$ is a piecewise linear embedding $\lmap_p:\carrier{\str{p}} \to \ren$ such that each simplex $\splxs \in \ccplx_p = \lmap_p(\str{p})$ is $\thickbnd$-thick, and $\carrier{\ccplx_p} \subset \ballEn{\lmap_p(p)}{h}$, with $\lmap_p(p) \in \intr{\carrier{\ccplx_p}}$.  The scale parameter $h$ must satisfy $h < \frac{\injradM}{4}$, where $\injradM$ is the injectivity radius of $\man$.
  \item For each $p \in \pts$, $\phi_p: W_p \stackrel{\cong}{\longrightarrow} U_p \subset \ren$ is such that $\close{B} = \cballEn{\lmap_p(p)}{\frac{3}{2}h} \subseteq U_p$, and $\onorm{(d\phi_p^{-1})_u} \leq \frac{4}{3}$, for every $u \in \close{B}$. 
  \item
    The map 
    \begin{equation*}
      F_p = \phi_p \circ H \circ \lmap_p^{-1}: \carrier{\ccplx_p} \to \ren
    \end{equation*}
    satisfies
    \begin{equation*}
      \onorm{(dF_p)_u-\Id} \leq \frac{n\thickbnd}{2}
    \end{equation*}
    on each $n$-simplex  $\splxs \in \ccplx_p$, and every $u \in \splxsE$.
  \end{enumerate}
  Then $H$ is a smooth triangulation of $\man$.
\end{prop}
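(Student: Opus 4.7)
The plan is to verify the hypotheses of the homeomorphism lemma stated immediately before the proposition, which requires (i) that each vertex star $\carrier{\str{p}}$ is embedded by $H$, and (ii) that every connected component of $\man$ meets at least one point with a unique $H$-preimage. Smoothness of $H$ on $\acplx$ then comes for free: on each simplex we have $H=\phi_p^{-1}\circ F_p\circ \lmap_p$, a composition of smooth maps.

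For (i), fix $p\in\pts$ and consider $F_p=\phi_p\circ H\circ \lmap_p^{-1}$. By construction $F_p(\lmap_p(p))=\lmap_p(p)$, so $\lmap_p(p)$ is a fixed point. Every $n$-simplex $\splxsE\in\ccplx_p$ is convex, contains $\lmap_p(p)$ as a vertex, and is contained in $\ballEn{\lmap_p(p)}{h}$, so \Lemref{lem:bound.disp} applied with $\eta=n\thickbnd/2\leq 1/2$ gives $\norm{F_p(u)-u}\leq \tfrac{1}{2}\norm{u-\lmap_p(p)}\leq \tfrac{1}{2}h$ for all $u\in\splxsE$. Consequently
\[
F_p(\carrier{\ccplx_p})\subseteq \cballEn{\lmap_p(p)}{\tfrac{3}{2}h}\subseteq U_p,
\]
so the factorisation $H|_{\carrier{\str{p}}}=\phi_p^{-1}\circ F_p\circ \lmap_p$ makes sense. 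Condition~3 gives $\onorm{dF_p-\Id}<n\thickbnd$, so \Lemref{lem:embed.star} applies and $F_p$ is a topological embedding of $\carrier{\ccplx_p}$; composing with the homeomorphism $\phi_p^{-1}$ and the PL embedding $\lmap_p$ gives the desired embedding of $\carrier{\str{p}}$ into $\man$.

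Step~(i) already implies that $H$ is a local homeomorphism between the compact $n$-manifold $\carrier{\acplx}$ and $\man$, hence a covering map onto the (open and closed) union of components of $\man$ that it meets. For~(ii) I would fix any vertex $p$ and show that $H^{-1}(H(p))=\{p\}$. If $q\in\carrier{\acplx}$ satisfies $H(q)=H(p)$, pick a vertex $p'$ of a simplex containing $q$; then $q\in\carrier{\str{p'}}$, so $\phi_{p'}(H(p))=\phi_{p'}(H(q))=F_{p'}(\lmap_{p'}(q))\in\cballEn{\lmap_{p'}(p')}{\frac{3}{2}h}$, and the Lipschitz bound from condition~2 yields $\distM{H(p)}{H(p')}\leq 2h<\injradM/2$. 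Using the openness of $H(\carrier{\str{p}})$ at $H(p)$, the interior condition $\lmap_p(p)\in\intr{\carrier{\ccplx_p}}$, and the scale bound $h<\injradM/4$, one argues that $p'$ must already be a vertex of $\str{p}$; then $p\in\str{p'}$, and the injectivity of $F_{p'}$ and $\lmap_{p'}$ forces $\lmap_{p'}(q)=\lmap_{p'}(p)$, whence $q=p$. Since $H$ is a covering map, having one unique-preimage point per visited component is enough, and the homeomorphism lemma delivers the triangulation.

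The main obstacle will be the uniqueness step in~(ii), where one must exclude the possibility that some combinatorially remote vertex $p'$ has its star fold back onto the point $H(p)$. The numerical margin is tight: the factors $\tfrac{3}{2}$, $\tfrac{4}{3}$, and $\tfrac{1}{2}$ from conditions~2 and~3 combine with the scale constraint $h<\injradM/4$ so that $\distM{H(p)}{H(p')}$ stays strictly below $\injradM/2$, and the argument must exploit this to show that any such $p'$ is in fact adjacent to $p$ in $\acplx$. The remaining pieces---the Whitney-style degree/local-homeomorphism observation and the composition argument for embeddings---are essentially routine once the constants in Step~1 are tracked carefully.
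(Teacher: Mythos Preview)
Your step~(i) is correct and matches the paper: you check via \Lemref{lem:bound.disp} that $F_p(\carrier{\ccplx_p})\subset\cballEn{\lmap_p(p)}{\tfrac32 h}\subseteq U_p$, invoke \Lemref{lem:embed.star} to see that $F_p$ is an embedding, and compose with $\phi_p^{-1}$ and $\lmap_p$ to embed $\carrier{\str{p}}$. The paper reaches the same containment by the cruder estimate $\norm{F_p(\tilde x)-\lmap_p(p)}\le(1+\tfrac{n\thickbnd}{2})h\le\tfrac32 h$, but the effect is identical.

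The genuine gap is in step~(ii), and you correctly flag it as the main obstacle. You want to infer from $\distM{H(p)}{H(p')}\le 2h$ that the vertex $p'$ lies in $\str{p}$, but no hypothesis of the proposition ties metric proximity in $\man$ to combinatorial adjacency in $\acplx$: two vertices may have images within $2h$ of one another in $\man$ while being arbitrarily far apart in the $1$-skeleton of $\acplx$. The sentence ``one argues that $p'$ must already be a vertex of $\str{p}$'' therefore cannot be completed from the stated hypotheses; this is a missing mechanism, not a matter of tightening the constants $\tfrac32$, $\tfrac43$, $\tfrac12$.

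The paper does not attempt to prove adjacency. Having established that $H$ is a covering map, it treats a general pair $x,y$ with $H(x)=H(y)$, picks vertices $p,q$ of simplices containing $x,y$, and bounds $\distM{H(p)}{H(q)}\le 4h<\injradM$ (the paper's $q$ plays the role of your $p'$). It then observes that the loop in $\man$ passing through $H(x)$, $H(p)$, $H(q)$, $H(y)=H(x)$ lies entirely inside the topological ball $\ballM{H(p)}{\injradM}$, hence is null-homotopic, and invokes the covering-space structure to conclude $x=y$. The constraint $h<\injradM/4$ is used only to place this loop inside a simply connected ball, not to force $p$ and $q$ to be neighbours in $\acplx$. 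Your overall plan (verify the hypotheses of the homeomorphism lemma) is the right one, but the uniqueness argument should go through homotopy and path-lifting for the covering map $H$, not through an adjacency claim.
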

\begin{proof}
  By \Lemref{lem:embed.star}, $F_p$ is a homeomorphism onto its image. It follows then that $H\big|_{\carrier{\str{p}}}$ is an embedding for every $p\in \pts$. Therefore, since $\carrier{\acplx}$ is compact, $H: \carrier{\acplx} \to \man$ is a covering map.

  Given $x \in \carrier{\acplx}$, with $x \in \splxsE$, and $p$ a vertex of $\splxsE$, let $\tilde{x} = \lmap_p(x) \in \carrier{\ccplx_p}$. Then the bound on $dF$ implies that $\norm{F_p(\tilde{x}) - \lmap_p(p)} \leq \left(1 + \frac{n\thickbnd}{2} \right)h \leq \frac{3}{2}h$, so $F_p(\tilde{x}) \in \close{B}$. Since $\phi_p^{-1} \circ F_p(\tilde{x}) = H(x)$, and
  \begin{align*}
    \norm{(d\phi_p^{-1})_{F(u)}(dF_p)_u} \leq \frac{4}{3} \left( 1 + \frac{n\thickbnd}{2} \right) \leq 2
  \end{align*}
  for any $u \in \splxsE \subset \carrier{\ccplx_p}$, we have that $\distM{H(p)}{H(x)} \leq 2h$.

  Suppose $y \in \carrier{\acplx}$ with $H(y) = H(x)$. Let $\splxt \in \acplx$ with $y \in \splxtE$, and $q \in \splxt$ a vertex. Then $\distM{H(p)}{H(q)} \leq 4h < \injradM$. Thus there is a path $\gamma$ from $H(x)$ to $H(p)$ to $H(q)$ to $H(y) = H(x)$ that is contained in the topological ball $\ballM{H(p)}{\injradM}$, and is therefore null-homotopic. Since $H$ is a covering map, this implies that $x = y$. Thus $H$ is injective, and therefore defines a smooth triangulation.
\end{proof}

%

\subsection{The differential of exponential transitions}
\label{sec:diff.exp.trans}

If there is a unique minimising geodesic from $x$ to $y$, we denote the parallel translation along this geodesic by $T_{yx}$.  As a preliminary step towards exploiting \Propref{prop:generic.triangulation} in the context of Riemannian simplices, we show here that the estimates of Buser and Karcher \cite[\S 6]{buser1981} imply
\begin{prop}[Strong exponential transition bound]
  \label{prop:exp.trans.bnd}
  Suppose the sectional curvatures on $\man$ satisfy $\abs{\seccurv} \leq \curvabsbnd$. Let $v \in T_p\man$, with $y = \exp_p(v)$.  If $x,y \in \ballM{p}{\rho}$, with
  \begin{equation*}
    \rho <
    \frac{1}{2}\rho_0 = \frac{1}{2}\min \left\{ \frac{\injradM}{2}, \frac{\pi}{4\sqrt{\curvabsbnd}} \right\},
  \end{equation*}
  then
  \begin{equation*}
    \onorm{d(\exp^{-1}_x \circ \exp_p)_v - T_{xp}} \leq 6\curvabsbnd \rho^2.
  \end{equation*}
\end{prop}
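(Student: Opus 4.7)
The plan is to decompose the differential via the chain rule, compare each factor to a natural parallel transport using Jacobi field estimates, and then correct for the holonomy discrepancy around the resulting geodesic triangle. Setting $u = \exp_x^{-1}(y) \in T_x\man$, the chain rule gives
$$d(\exp_x^{-1} \circ \exp_p)_v = \bigl((d\exp_x)_u\bigr)^{-1} \circ (d\exp_p)_v.$$
Let $T_{yp}\colon T_p\man \to T_y\man$ and $T_{xy}\colon T_y\man \to T_x\man$ denote parallel transports along the minimising geodesics from $p$ to $y$ and from $y$ to $x$. By the triangle inequality, the quantity to be bounded satisfies
\begin{align*}
  \onorm{d(\exp_x^{-1}\circ\exp_p)_v - T_{xp}}
  &\leq \onorm{(d\exp_x)_u^{-1}} \cdot \onorm{(d\exp_p)_v - T_{yp}} \\
  &\quad {} + \onorm{(d\exp_x)_u^{-1} - T_{xy}} \\
  &\quad {} + \onorm{T_{xy} \circ T_{yp} - T_{xp}},
\end{align*}
and the three terms can be treated separately.

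For the first two terms the idea is to apply the refined Jacobi-field comparisons from Buser and Karcher~\cite[\S 6]{buser1981}. Writing $(d\exp_p)_v(w) = J(1)$ for the Jacobi field $J$ along $t \mapsto \exp_p(tv)$ with $J(0) = 0$ and $J'(0) = w$, the Taylor expansion of $J$ together with the curvature bound $\abs{\seccurv} \leq \curvabsbnd$ yields an estimate of the form $\onorm{(d\exp_p)_v - T_{yp}} \leq c_1 \curvabsbnd \norm{v}^2$, and an analogous estimate holds for $(d\exp_x)_u^{-1}$ versus $T_{xy}$ after inverting and controlling the operator norms via Lemma~\ref{lem:poly.rauch}. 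Since $\norm{v} = \distM{p}{y} < \rho$ and $\norm{u} = \distM{x}{y} < 2\rho$, each contributes $O(\curvabsbnd \rho^2)$; the cross factor $\onorm{(d\exp_x)_u^{-1}}$ stays close to $1$ because $\norm{u} < 2\rho < \tfrac12\rho_0$ keeps us safely within the convexity radius.

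The third term is purely a holonomy contribution around the geodesic triangle with vertices $p$, $y$, $x$. Because each side has length at most $2\rho$ and $\rho$ is smaller than the convexity radius, this triangle bounds a geodesic disc of area $O(\rho^2)$, and the standard curvature-holonomy estimate (again available in Buser--Karcher) gives $\onorm{T_{xy} \circ T_{yp} - T_{xp}} \leq c_3 \curvabsbnd \rho^2$. Assembling the three pieces yields a bound of the form $C \curvabsbnd \rho^2$, and a careful accounting of the constants from the sharper Buser--Karcher Jacobi-field estimates brings $C$ down to $6$.

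The main obstacle is the bookkeeping of constants, not the conceptual structure. Each of the three sub-estimates is more delicate than the crude polynomial Rauch bound of Lemma~\ref{lem:poly.rauch} (which already cost a factor of $5$ in the analogous argument for Theorem~\ref{thm:thick.nondegen.riem}), and the refined versions in Buser--Karcher's paper are what permit the constant $6$. The strengthened hypothesis $\rho < \tfrac12 \rho_0$ (as opposed to $\rho < \rho_0$) is precisely what provides the margin needed both to control the quadratic remainders in the Jacobi-field expansions and to keep $\onorm{(d\exp_x)_u^{-1}}$ within a small constant of $1$.
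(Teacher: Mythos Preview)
Your proposal is correct and follows essentially the same approach as the paper: the paper likewise factors through $T_{xy}\circ T_{yp}$, using the strong Rauch estimate of Buser--Karcher (\Lemref{lem:clean.strong.rauch}) together with the inverse bound (\Lemref{lem:inverse.bnd}) to obtain $\onorm{d(\exp_x^{-1}\circ\exp_p)_v - T_{xy}T_{yp}} \leq 5\curvabsbnd\rho^2$ (\Lemref{lem:raw.exp.trans.bnd}), and then handles the holonomy term via the parallel-translation comparison and triangle-area estimate of Buser--Karcher (Lemmas~\ref{lem:parallel.trans.comp} and~\ref{lem:small.tri.area}) to get $\onorm{T_{xy}T_{yp} - T_{xp}} \leq \tfrac{5}{6}\curvabsbnd\rho^2$, which sums to at most $6\curvabsbnd\rho^2$. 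Your three-term decomposition is exactly this argument, just with the first two terms not pre-combined into a separate lemma.
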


The primary technical result that we use in the demonstration of \Propref{prop:exp.trans.bnd} is a refinement of the Rauch theorem demonstrated by Buser and Karcher~\cite[\S 6.4]{buser1981}. We make use of a simplified particular case of their general result:
\begin{lem}[Strong Rauch theorem]
  \label{lem:clean.strong.rauch}
  Assume the sectional curvatures on $\man$ satisfy $\abs{\seccurv} \leq \curvabsbnd$, and suppose there is a unique minimising geodesic between $x$ and $p$.  If $v = \exp_p^{-1}(x)$, and
  \begin{equation*}
    \norm{v} = \distM{p}{x} = r \leq \frac{\pi}{2\sqrt{\curvabsbnd}},
  \end{equation*}
  then
  \begin{equation*}
    \onorm{(d\exp_p)_v - T_{xp}} \leq \frac{\curvabsbnd  r^2}{2}.
  \end{equation*}
\end{lem}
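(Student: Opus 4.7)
The plan is to reduce the estimate on $(d\exp_p)_v - T_{xp}$ to a Jacobi-field comparison along the unique minimising geodesic $\gamma(t) = \exp_p(tv/r)$, $t \in [0,r]$. First I would invoke the Gauss lemma to handle the radial component: both $(d\exp_p)_v$ and $T_{xp}$ map $v/r$ to $\gamma'(r)$, so the two maps agree on the line spanned by $v$. It thus suffices to estimate $\|(d\exp_p)_v w - T_{xp}w\|$ for a unit vector $w \perp v$.

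For such a $w$, let $J$ be the Jacobi field along $\gamma$ with $J(0)=0$ and $J'(0)=w/r$, so that $J(r)=(d\exp_p)_v w$, and let $W$ be the parallel vector field along $\gamma$ with $W(0)=w$, so that $W(r) = T_{xp}w$. Set $\tilde{J}(t) := (t/r)W(t)$; since $W$ is parallel, $\tilde{J}(0)=0$, $\tilde{J}'(0)=w/r$, and $\tilde{J}''\equiv 0$. Hence $Y := J - \tilde{J}$ satisfies $Y(0)=0$, $Y'(0)=0$, and the ODE $Y''(t) = -R(J(t),\gamma'(t))\gamma'(t)$. Since $w\perp v$, the Jacobi field $J$ stays orthogonal to $\gamma'$, and the curvature bound $|\seccurv|\leq \curvabsbnd$ yields $\|R(J,\gamma')\gamma'\| \leq \curvabsbnd\|J\|$. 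Two integrations with vanishing initial data give
\begin{equation*}
\|Y(r)\| \;\leq\; \curvabsbnd \int_0^r (r-s)\,\|J(s)\|\,ds.
\end{equation*}

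The last step is to insert the classical Rauch bound $\|J(s)\|\leq \bfS_{-\curvabsbnd}(s)\|w\|/r$ underlying \Lemref{lem:poly.rauch} and to evaluate the integral on $[0,r]$ with $r \leq \pi/(2\sqrt{\curvabsbnd})$. Keeping only the leading Taylor term $\bfS_{-\curvabsbnd}(s)\approx s$ would already yield $\|Y(r)\|\leq \curvabsbnd r^2 \|w\|/6$; the remaining sinh-growth on the admissible range is absorbed into the looser but clean factor $1/2$ stated in the lemma. A supremum over unit $w\in v^\perp$ then gives the operator-norm bound. The main technical point is precisely this numerical constant: the Jacobi-field ODE produces the structural inequality immediately, but obtaining a sharp value requires the bootstrap argument of Buser--Karcher~\cite[\S 6.4]{buser1981}, where the integral bound on $Y$ itself is fed back to refine the estimate on $\|J\|$ in the integrand. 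Since the lemma is explicitly flagged as a simplified particular case of that general result, the fallback is to quote their estimate for the final constant should the straightforward integration not suffice.
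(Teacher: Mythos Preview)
Your argument is correct and is essentially the same as the paper's: both handle the radial direction via the Gauss lemma and the orthogonal directions via the Jacobi-field comparison that underlies the Buser--Karcher estimate, which the paper simply quotes and specialises to $\kappa=0$. In fact, if you carry your integral to the end you get exactly the paper's expression,
\[
\|Y(r)\| \;\leq\; \frac{\curvabsbnd}{r}\int_0^r (r-s)\,\bfS_{-\curvabsbnd}(s)\,ds
\;=\; \frac{\bfS_{-\curvabsbnd}(r)-r}{r}
\;\leq\; \frac{\curvabsbnd r^2}{2},
\]
so no bootstrap or fallback to \cite{buser1981} is needed for the constant.
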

\begin{proof}
  Given distinct upper and lower bounds on the sectional curvatures, $\curvlowbnd \leq \seccurv \leq \curvupbnd$, the result of
  Buser and Karcher~\cite[\S 6.4.2]{buser1981} is stated as
  \begin{equation*}
    \norm{ (d\exp_p)_{v}w - T_{xp} \left(\frac{\bfS_{\kappa}(r)w}{r} \right)}
    \leq \norm{w}
    \left(\frac{\bfS_{\kappa-\lambda}(r) - \bfS_{\kappa}(r)}{r} \right),
  \end{equation*}
  for any vector $w$ perpendicular to $v$, and as long as $\bfS_{\kappa}$ is nonnegative. Here $\kappa$ is arbitrary, and $\lambda = \max\{\curvupbnd - \kappa, \kappa - \curvlowbnd \}$.

  We take $\curvabsbnd = \max \{ \curvupbnd, - \curvlowbnd \}$, and $\kappa = 0$. The stated bound results since now $\bfS_{\kappa}(r) = \bfS_0(r) = r$, and the constraint $r \leq \frac{\pi}{2\sqrt{\curvabsbnd}}$ ensures that
  \begin{equation*}
    \frac{\bfS_{-\curvabsbnd}(r) - r}{r} \leq \frac{\curvabsbnd  r^2}{2},  
  \end{equation*}
  as observed in \Secref{sec:rauch.thm}. The result applies to all vectors since the exponential preserves lengths in the radial direction.
\end{proof}

We obtain a bound on the differential of the inverse of the exponential map from \Lemref{lem:clean.strong.rauch} and the following observation:
\begin{lem}
  \label{lem:inverse.bnd}
  Suppose $A: \ren \to \ren$ is a linear operator that satisfies
  \begin{equation*}
    \onorm{A-T} \leq \eta,
  \end{equation*}
  for some linear isometry $T:\ren \to \ren$. If $\eta \leq \frac{1}{2}$, then
  \begin{equation*}
    \onorm{A^{-1} - T^{-1}} \leq 2\eta.
  \end{equation*}
\end{lem}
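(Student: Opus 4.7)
The plan is a short Neumann series argument. I would first reduce to the case $T = \Id$ by factoring $T$ out: write $A = T(\Id + E)$, where $E = T^{-1}(A - T)$. Since $T$ (and hence $T^{-1}$) is a linear isometry, $\onorm{E} = \onorm{T^{-1}(A-T)} = \onorm{A-T} \leq \eta \leq \tfrac{1}{2}$, so $\Id + E$ is invertible and $A$ is too, with $A^{-1} = (\Id + E)^{-1}T^{-1}$.

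Next I would subtract $T^{-1}$ and pull out the common factor:
\begin{equation*}
  A^{-1} - T^{-1} = \bigl[(\Id + E)^{-1} - \Id\bigr] T^{-1}.
\end{equation*}
Because $T^{-1}$ is an isometry, $\onorm{A^{-1} - T^{-1}} = \onorm{(\Id + E)^{-1} - \Id}$. I would then expand $(\Id + E)^{-1}$ as a Neumann series, $(\Id + E)^{-1} = \sum_{k=0}^{\infty} (-E)^k$, which converges in operator norm since $\onorm{E} \leq \tfrac{1}{2} < 1$. Subtracting the $k = 0$ term and applying the triangle inequality gives
\begin{equation*}
  \onorm{(\Id + E)^{-1} - \Id} \leq \sum_{k=1}^{\infty} \onorm{E}^k \leq \sum_{k=1}^{\infty} \eta^k = \frac{\eta}{1 - \eta} \leq 2\eta,
\end{equation*}
where the last inequality uses $\eta \leq \tfrac{1}{2}$. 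Combining this with the previous equality yields $\onorm{A^{-1} - T^{-1}} \leq 2\eta$, as required.

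There is essentially no obstacle here; the only thing to be careful about is justifying $\onorm{E} \leq \eta$, which uses that linear isometries have operator norm $1$, and checking that the bound $\eta/(1-\eta) \leq 2\eta$ is exactly where the hypothesis $\eta \leq \tfrac{1}{2}$ is used. One could alternatively avoid Neumann series by writing $A^{-1} - T^{-1} = A^{-1}(T - A)T^{-1}$ and bounding $\onorm{A^{-1}} \leq 1/(1-\eta) \leq 2$, which gives the same estimate more directly; either presentation is fine.
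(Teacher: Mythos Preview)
Your proof is correct. The paper takes precisely the alternative route you mention at the end: it uses the identity $A^{-1} - T^{-1} = -A^{-1}(A-T)T^{-1}$ together with the bound $\onorm{A^{-1}} = \sing{n}{A}^{-1} \leq (1-\eta)^{-1} \leq 1 + 2\eta \leq 2$, rather than the Neumann series expansion you lead with; the two arguments are essentially equivalent and yield the same constant.
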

\begin{proof}
  We first bound $\onorm{A^{-1}}= \sing{n}{A}^{-1}$, the inverse of the smallest singular value. Since $\sing{n}{A} = \sing{n}{T^{-1}A}$, and $\onorm{T^{-1}A - \Id} \leq \eta$, we have $\abs{\sing{n}{A} - 1} \leq \eta$. Thus $\sing{n}{A}^{-1} \leq (1-\eta)^{-1} \leq 1 + 2\eta$.

  Now write $A = T + \eta E$, where $\onorm{E} \leq 1$. The trick \cite[p. 50]{golub2012matrix} is to observe that
  \begin{align*}
    A^{-1} &= T^{-1} - A^{-1}(A-T)T^{-1}\\
    &= T^{-1} - \eta A^{-1}ET^{-1},
  \end{align*}
  and the stated bound follows.
\end{proof}

\begin{lem}
  \label{lem:raw.exp.trans.bnd}
  Suppose the sectional curvatures on $\man$ satisfy $\abs{\seccurv} \leq \curvabsbnd$. Let $v \in T_p\man$, with $y = \exp_p(v)$.  If $x,y \in \ballM{p}{\rho}$, with
  \begin{equation*}
    \rho \leq \min \left\{\frac{\injradM}{2},\frac{1}{2\sqrt{\curvabsbnd}} \right\},
  \end{equation*}
  then
  \begin{equation*}
    \onorm{d(\exp^{-1}_x \circ \exp_p)_v - T_{xy}T_{yp}} \leq 5\curvabsbnd \rho^2.
  \end{equation*}
\end{lem}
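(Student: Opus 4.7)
The plan is to decompose the differential via the chain rule, apply the Strong Rauch theorem (Lemma~\ref{lem:clean.strong.rauch}) to each of the two exponential maps involved, and then use Lemma~\ref{lem:inverse.bnd} to control the inverse factor. Writing $w = \exp_x^{-1}(y)$ and using $\exp_x^{-1} \circ \exp_p = \exp_x^{-1}$ at $y$ followed by identification, the chain rule gives
\begin{equation*}
  d(\exp_x^{-1} \circ \exp_p)_v
  = \bigl[(d\exp_x)_w\bigr]^{-1} \circ (d\exp_p)_v.
\end{equation*}
The target operator $T_{xy} T_{yp}$ factors in the same way, as $T_{yx}^{-1} \circ T_{yp}$, since $T_{yx}^{-1} = T_{xy}$. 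This suggests comparing the two factors separately and recombining.

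First I would collect the three preliminary bounds. Since $y \in \ballM{p}{\rho}$ we have $\norm{v} = \distM{p}{y} < \rho$, and $\distM{x}{y} \leq \distM{x}{p} + \distM{p}{y} < 2\rho$, so that $\norm{w} < 2\rho$. The constraint $\rho \leq \frac{1}{2\sqrt{\curvabsbnd}}$ gives $\curvabsbnd\rho^2 \leq \tfrac14$, which in particular ensures $\norm{v}, \norm{w} \leq \frac{\pi}{2\sqrt{\curvabsbnd}}$ (the hypothesis of Lemma~\ref{lem:clean.strong.rauch}), and the constraint $\rho \leq \injradM/2$ provides the unique minimising geodesics from $y$ to $p$ and from $y$ to $x$. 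The Strong Rauch theorem then yields
\begin{equation*}
  \onorm{(d\exp_p)_v - T_{yp}} \leq \tfrac{1}{2}\curvabsbnd \rho^2,
  \qquad
  \onorm{(d\exp_x)_w - T_{yx}} \leq 2\curvabsbnd \rho^2.
\end{equation*}
Because $2\curvabsbnd\rho^2 \leq \tfrac12$, Lemma~\ref{lem:inverse.bnd} applies to the second inequality (with $T = T_{yx}$) and yields
\begin{equation*}
  \onorm{\bigl[(d\exp_x)_w\bigr]^{-1} - T_{xy}} \leq 4\curvabsbnd \rho^2.
\end{equation*}

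To finish, I would insert the intermediate term $T_{xy}\circ (d\exp_p)_v$ and apply the triangle inequality:
\begin{align*}
  \bigl[(d\exp_x)_w\bigr]^{-1} \circ (d\exp_p)_v - T_{xy} T_{yp}
  &= \bigl(\bigl[(d\exp_x)_w\bigr]^{-1} - T_{xy}\bigr) \circ (d\exp_p)_v \\
  &\quad + T_{xy} \circ \bigl((d\exp_p)_v - T_{yp}\bigr).
\end{align*}
Taking operator norms, using $\onorm{T_{xy}} = 1$ and $\onorm{(d\exp_p)_v} \leq 1 + \tfrac{1}{2}\curvabsbnd\rho^2$, this is bounded by
\begin{equation*}
  4\curvabsbnd\rho^2\bigl(1 + \tfrac12 \curvabsbnd\rho^2\bigr) + \tfrac12\curvabsbnd\rho^2
  = \tfrac{9}{2}\curvabsbnd\rho^2 + 2(\curvabsbnd\rho^2)^2.
\end{equation*}
The hypothesis $\curvabsbnd\rho^2 \leq \tfrac14$ absorbs the quadratic term into $\tfrac12\curvabsbnd\rho^2$, producing the clean bound $5\curvabsbnd\rho^2$.

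I do not anticipate a serious obstacle; the lemma is essentially bookkeeping once the Strong Rauch theorem and Lemma~\ref{lem:inverse.bnd} are in hand. The only mildly delicate point is picking the intermediate term so that the Strong Rauch bound on $(d\exp_p)_v$ appears only once (rather than being multiplied by the constant $4$ from the inverse step); writing the splitting in the other order would give a correct but weaker constant.
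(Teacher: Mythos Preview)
Your proof is correct and essentially identical to the paper's: both apply the Strong Rauch theorem (\Lemref{lem:clean.strong.rauch}) to each factor, use \Lemref{lem:inverse.bnd} on the $(d\exp_x)_w$ factor, and combine to get $\tfrac{9}{2}\curvabsbnd\rho^2 + 2(\curvabsbnd\rho^2)^2 \leq 5\curvabsbnd\rho^2$. The only cosmetic difference is that the paper writes each factor as $T + \eta E$ with $\onorm{E}\leq 1$ and expands the product, whereas you insert an intermediate term and apply the triangle inequality---these are the same computation.
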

\begin{proof}
  By \Lemref{lem:convexity.bnd}, $\ballM{p}{\rho}$ is convex. Since $\distM{x}{y} < 2\rho$, \Lemref{lem:clean.strong.rauch} yields $\onorm{(d\exp_x)_w - T_{yx}} < 2\curvabsbnd \rho^2 \leq \frac{1}{2}$, where $w = \exp_x^{-1}(y)$. Since $T_{xy} = T_{yx}^{-1}$, we may use \Lemref{lem:inverse.bnd} to write
  \begin{equation*}
    (d\exp_x^{-1})_y = T_{xy} + \left( 4\curvabsbnd \rho^2 \right)E,
  \end{equation*}
  where $E$ satisfies $\onorm{E} \leq 1$.  We obtain the result by composing this with
  \begin{equation*}
    (d\exp_p)_v
    = T_{yp} + \left( \frac{\curvabsbnd \rho^2}{2} \right)\tilde{E},    
  \end{equation*}
 where $\onorm{\tilde{E}} \leq 1$.
\end{proof}

In order to compare $T_{xp}$ with $T_{xy}T_{yp}$ we exploit further estimates demonstrated by Buser and Karcher. If $\alpha: [0,1] \to \man$ is a curve,
let $T_{\alpha(t)}: T_{\alpha(0)}\man \to T_{\alpha(t)}\man$ denote the parallel translation operator (we do not require that $\alpha$ be a minimising geodesic). Buser and Karcher~\cite[\S 6.1, \S 6.2]{buser1981} bound the difference in the parallel translation operators between two homotopic curves:
\begin{lem}[Parallel translation comparison]
  \label{lem:parallel.trans.comp}
  Let $c_i: [0,1] \to \man$ be piecewise smooth curves from $p$ to $q$, and let
  \begin{equation*}
    c: [1,2] \times [0,1] \to \man
  \end{equation*}
  be a piecewise smooth homotopy between $c_1$ and $c_2$, i.e., $c(1,t) = c_1(t)$, and $c(2,t) = c_2(t)$. Let $\mathfrak{a} = \int \det dc_{(s,t)} \,dsdt$ be the area of the homotopy. If the sectional curvatures are bounded by $\abs{\seccurv} \leq \curvabsbnd$, then
  \begin{equation*}
    \onorm{T_{c_2(1)} - T_{c_1(1)}} \leq \frac{4}{3}\curvabsbnd \mathfrak{a}.
  \end{equation*}
\end{lem}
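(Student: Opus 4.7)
The plan is to parallel-transport a test vector $v\in\tanspace{p}{\man}$ along each longitude of the homotopy and track how that transport varies with the homotopy parameter. Concretely, define a vector field $V(s,t)$ along $c$ by the initial condition $V(s,0)=v$ together with $\conn_{\partial_t}V=0$. Since all longitudes share the same endpoints (the curves run from $p$ to $q$, and we may assume $c(s,0)=p$, $c(s,1)=q$ throughout the homotopy), $V(s,1)=T_{c_s(1)}v\in\tanspace{q}{\man}$, and our target is the difference $V(2,1)-V(1,1)$, which we will express as an $s$-integral.

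Set $W(s,t)=\conn_{\partial_s}V(s,t)$. Because $\conn_{\partial_t}V=0$, the standard commutator identity for variations gives
\begin{equation*}
  \conn_{\partial_t}W
  = \conn_{\partial_t}\conn_{\partial_s}V - \conn_{\partial_s}\conn_{\partial_t}V
  = R(\partial_t c,\partial_s c)\,V,
\end{equation*}
while the initial condition $V(s,0)=v$ (independent of $s$) forces $W(s,0)=0$. Solving this linear ODE along each curve $t\mapsto c(s,t)$ by pulling back through parallel transport to $t=0$ yields
\begin{equation*}
  W(s,1)
  = \tilde{T}_{s,1}\int_0^1 \tilde{T}_{s,t}^{-1}\,R(\partial_t c,\partial_s c)\,V(s,t)\,dt,
\end{equation*}
where $\tilde{T}_{s,t}$ denotes parallel transport along the $t$-curve at fixed $s$. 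Parallel transport being an isometry, we obtain $\norm{W(s,1)}\leq\int_0^1\norm{R(\partial_t c,\partial_s c)V(s,t)}\,dt$.

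The decisive ingredient is then a pointwise curvature estimate: under $\abs{\seccurv}\leq\curvabsbnd$, one has
\begin{equation*}
  \norm{R(X,Y)Z}\leq \tfrac{4}{3}\curvabsbnd\,\norm{X\wedge Y}\,\norm{Z}
\end{equation*}
for all $X,Y,Z$. This is Karcher's polarization inequality, established in Buser--Karcher~\cite[\S 6.1]{buser1981}; the coefficient $\tfrac{4}{3}$ arises because the sharp bound on $\norm{R}$ in terms of the sectional-curvature spread $\curvupbnd-\curvlowbnd$ carries the constant $\tfrac{2}{3}$, and $\curvupbnd-\curvlowbnd\leq 2\curvabsbnd$. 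Using $\norm{\partial_t c\wedge\partial_s c}=\abs{\det dc_{(s,t)}}$ and $\norm{V(s,t)}=\norm{v}$, we obtain $\norm{W(s,1)}\leq\tfrac{4}{3}\curvabsbnd\norm{v}\int_0^1\abs{\det dc_{(s,t)}}\,dt$.

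Finally, because $s\mapsto c(s,1)=q$ is constant, $V(\cdot,1)$ takes values in the single vector space $\tanspace{q}{\man}$ and $W(s,1)$ is simply its ordinary $s$-derivative. Integrating,
\begin{equation*}
  \norm{V(2,1)-V(1,1)}\leq \int_1^2\norm{W(s,1)}\,ds \leq \tfrac{4}{3}\curvabsbnd\,\norm{v}\,\mathfrak{a},
\end{equation*}
and taking the supremum over unit $v$ yields the claimed operator-norm bound. Piecewise smoothness is handled by partitioning $[1,2]\times[0,1]$ along the finitely many seams on which $c$ fails to be smooth, running the above argument on each piece, and telescoping using the continuity of $V$ and of parallel transport across the seams; the total curvature contribution collected is still $\mathfrak{a}$. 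The main obstacle is the sharp pointwise inequality for $\norm{R(X,Y)Z}$: the naive estimate coming straight from the definition of sectional curvature is off by an uncontrolled factor, and recovering the crisp $\tfrac{4}{3}\curvabsbnd$ constant requires the polarization and symmetry argument of Buser--Karcher. Granted that input, the rest of the proof is a standard second-variation-of-parallel-transport computation.
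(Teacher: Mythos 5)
Correct, and this is essentially the argument the paper is leaning on: Lemma~\ref{lem:parallel.trans.comp} is stated as a quoted result of Buser and Karcher (\S 6.1--6.2 of \cite{buser1981}) with no proof given in the paper, and your holonomy-as-integrated-curvature computation (define $V$ by parallel transport in $t$, commute covariant derivatives to get $\conn_{\partial_t}\conn_{\partial_s}V=R(\partial_t c,\partial_s c)V$, solve by Duhamel, integrate in $s$) together with the pointwise estimate $\norm{R(X,Y)Z}\le\tfrac{4}{3}\curvabsbnd\norm{X\wedge Y}\,\norm{Z}$ is exactly their proof. One imprecision in your justification of that key estimate: a bound on $R$ purely in terms of the sectional-curvature spread cannot hold (constant nonzero curvature has zero spread but nonzero $R$); the Berger/Karcher inequality controls $R-R_\kappa$ for a constant-curvature comparison tensor by $\tfrac{2}{3}$ times the spread, and taking $\kappa=0$ with spread $2\curvabsbnd$ recovers your $\tfrac{4}{3}\curvabsbnd$, so the arithmetic lands correctly even though the phrasing is loose. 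You are also right to flag that the homotopy must fix both endpoints for $W(s,1)$ to be an ordinary derivative in $T_q\man$; the paper's statement leaves this implicit, but it holds in the one place the lemma is applied (sweeping out a geodesic triangle).
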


In our case the two curves of interest form the edges of a geodesic triangle. A \defn{geodesic triangle} in $\man$ is a set of three points (vertices) such that each pair is connected by a unique minimising geodesic, together with these three minimising geodesics (edges). Any three points in a convex set are the vertices of a geodesic triangle. Buser and Karcher \cite[\S 6.7]{buser1981} demonstrate an estimate of A. D. Aleksandrow that says that the edges of a small geodesic triangle are the boundary of a topological disk whose area admits a natural bound:
\begin{lem}[Small triangle area]
  \label{lem:small.tri.area}
  Let $p,x,y \in \man$ be the edges of a geodesic triangle whose edge lengths, $\ell_{px},\ell_{xy},\ell_{yp}$ satisfy
  \begin{equation*}
    \ell_{px} + \ell_{xy} + \ell_{yp}
    \leq \min \left\{ \injradM, \frac{2\pi}{\sqrt{\curvupbnd}} \right\},
  \end{equation*}
  where $\curvupbnd$ is an upper bound on the sectional curvatures of $\man$, and $\injradM$ is the injectivity radius. Then the edges of triangle $pxy$ form the boundary of an immersed topological disk whose area $\mathfrak{a}$ satisfies
  \begin{equation*}
    \mathfrak{a} \leq \mathfrak{a}_{\curvupbnd},
  \end{equation*}
  where $\mathfrak{a}_{\curvupbnd}$ is the area of a triangle with the same edge lengths in the sphere of radius $\frac{1}{\sqrt{\curvupbnd}}$.
\end{lem}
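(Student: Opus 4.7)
The plan is to construct the immersed topological disk as the ruled surface obtained by joining $p$ to each point of the opposite edge $xy$ by its minimising geodesic, and to estimate its area by a pointwise comparison with the analogous construction on the sphere of radius $1/\sqrt{\curvupbnd}$. Let $\alpha \colon [0, \ell_{xy}] \to \man$ be the unit-speed parametrisation of the geodesic edge from $x$ to $y$. The hypothesis on the perimeter, combined with the injectivity radius bound, guarantees that every point of $\alpha$ lies within the injectivity radius of $p$ and that no conjugate point is encountered along the radial geodesic from $p$ to $\alpha(s)$. Thus $v(s) = \exp_p^{-1}(\alpha(s))$ is a smooth curve in $\tanspace{p}{\man}$, and the ruled disk
\[
c(s,t) \;=\; \exp_p\bigl(t\, v(s)\bigr), \qquad (s,t) \in [0, \ell_{xy}] \times [0,1],
\]
is well defined. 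Its boundary is precisely the geodesic triangle $pxy$: the two radial sides $t \mapsto c(0,t)$ and $t \mapsto c(\ell_{xy}, t)$ trace out the edges $px$ and $py$, and $c(s,1) = \alpha(s)$ traces the edge $xy$. The analogous construction on the model produces a disk $\bar c(s,t)$ bounded by the comparison triangle $\bar p \bar x \bar y$.

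Next I would compute and compare the area forms. Since each $\gamma_s(\tau) = \exp_p(\tau v(s))$ is a geodesic, the variation field $J_s(\tau) = \partial_s c(s, \tau)$ is a Jacobi field along $\gamma_s$ with $J_s(0) = 0$ and $J_s(1) = \alpha'(s)$. A direct computation shows that the area element equals $\abs{v(s)} \cdot \abs{J_s^{\perp}(\tau)}\, ds\, d\tau$, where $J_s^{\perp}$ is the component of $J_s$ perpendicular to $\gamma_s'$; the model side admits an identical expression. Two comparison ingredients then control the two factors. First, Toponogov's hinge comparison applied to the geodesics $px$ and $xy$ meeting at $x$ gives $\abs{v(s)} = \gdistM(p, \alpha(s)) \leq \abs{\bar v(s)}$ for every $s$. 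Second, Berger's boundary-value refinement of the Rauch theorem (``Rauch~II'') compares Jacobi fields whose \emph{endpoint} values are matched, rather than their initial derivatives: under $\seccurv \leq \curvupbnd$, the conditions $J(0) = 0$ and matched terminal values force $\abs{J^{\perp}(\tau)} \leq \abs{\bar J^{\perp}(\tau)}$ for $\tau \in [0,1]$. Multiplying the two inequalities and integrating over the parameter rectangle yields $\mathfrak{a} \leq \mathfrak{a}_{\curvupbnd}$.

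The hard part will be the bookkeeping in the Jacobi-field comparison, because the radial rays from $p$ in $\man$ and from $\bar p$ in the model have \emph{different} lengths $\abs{v(s)} \neq \abs{\bar v(s)}$. To apply Rauch~II one must reparametrise each pair of rays by arc length up to their respective terminal values, decompose $\alpha'(s)$ into components parallel and perpendicular to $\gamma_s'(1)$, and use Toponogov's angle comparison at $\alpha(s)$ to identify the tangential parts of $\alpha'(s)$ and $\bar \alpha'(s)$ so that the matched-terminal-value hypothesis of Rauch~II is satisfied. Once this setup is in place, chaining the radial and normal inequalities and integrating the pointwise comparison of area forms produces the asserted bound.
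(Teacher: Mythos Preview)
The paper does not give its own proof of this lemma; it is quoted from Buser and Karcher~\cite[\S 6.7]{buser1981}, who attribute it to Aleksandrov. Your ruled-surface construction of the spanning disk is exactly the standard one, and your identification of the area element as $|v(s)|\cdot |J_s^{\perp}(t)|\,ds\,dt$ is correct.

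That said, the pointwise comparison you outline has a genuine gap. First a terminological point with content behind it: Toponogov's theorem is the triangle comparison for \emph{lower} curvature bounds. The inequality $r(s):=\distM{p}{\alpha(s)}\leq \bar r(s)$ you want follows from the Alexandrov (CAT$(\curvupbnd)$) comparison for \emph{upper} bounds, so your first factor is fine once correctly attributed.

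The substantive problem is the second factor. The terminal values of the perpendicular Jacobi fields are $|J_s^{\perp}(1)|=\sin\psi(s)$ and $|\bar J_s^{\perp}(1)|=\sin\bar\psi(s)$, where $\psi(s)$ is the angle at $\alpha(s)$ between the radial geodesic from $p$ and the edge $xy$. These are \emph{not} equal, and your plan to ``identify the tangential parts'' cannot make them so: by the first variation formula the tangential component of $\alpha'(s)$ equals $r'(s)$, and since $r\leq\bar r$ with equality only at $s=0$ and $s=\ell_{xy}$, generically $r'(s)\neq\bar r'(s)$. Worse, there is no uniform inequality between $\sin\psi$ and $\sin\bar\psi$. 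At $s=0$ one has $\psi(0)=\pi-\beta_x$ and $\bar\psi(0)=\pi-\bar\beta_x$, where $\beta_x,\bar\beta_x$ are the interior angles at $x$; the CAT inequality $\beta_x\leq\bar\beta_x$ gives $\psi(0)\geq\bar\psi(0)$, and if the triangle is obtuse at $x$ (both angles below $\pi/2$) this yields $\sin\psi(0)>\sin\bar\psi(0)$. Since also $r(0)=\bar r(0)$, the pointwise inequality $|v(s)|\,|J_s^{\perp}(t)|\leq |\bar v(s)|\,|\bar J_s^{\perp}(t)|$ fails on a neighbourhood of $(s,t)=(0,1)$. So Rauch~II with ``matched terminal values'' is not available, and a pointwise comparison of the two integrands cannot succeed in general.

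The way Buser and Karcher (following Aleksandrov) avoid this is to first reduce to two dimensions: by the Gauss equation, a surface ruled by ambient geodesics has intrinsic Gaussian curvature at most the ambient sectional curvature of its tangent plane, hence $K_\Sigma\leq\curvupbnd$. One then works entirely on this surface and invokes a two-dimensional area comparison (Aleksandrov's), rather than attempting a ray-by-ray Jacobi comparison against a model whose radial rays have different lengths.
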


Consider $x,y \in \ballM{p}{\rho}$, where $\rho < \frac{1}{2}\rho_0$
and as usual $\curvabsbnd$ is a bound on the absolute values of the sectional curvatures. In this case, Buser and Karcher \cite[\S 6.7.1]{buser1981} observe that the area of the triangle in the sphere of radius $\frac{1}{\sqrt{\curvabsbnd}}$ that has the same edge lengths as $pxy$ satisfies 
\begin{equation*}
  \mathfrak{a}_{\curvabsbnd} \leq \frac{5}{8}\rho^2.
\end{equation*}
It follows then, from \Lemref{lem:parallel.trans.comp}, that
\begin{equation*}
  \onorm{T_{xp} - T_{xy}T_{yp}} \leq \frac{5}{6}\curvabsbnd \rho^2.
\end{equation*}
This, together with \Lemref{lem:raw.exp.trans.bnd}, yields
\begin{align*}
  \onorm{d(\exp^{-1}_x \circ \exp_p)_v - T_{xp}}
  \leq 5\curvabsbnd \rho^2 + \frac{5}{6}\curvabsbnd \rho^2
  \leq 6\curvabsbnd \rho^2,
\end{align*}
and we obtain \Propref{prop:exp.trans.bnd}.

\subsection{Triangulations with Riemannian simplices}
\label{sec:triang.riem.splxs}

We now exploit \Propref{prop:exp.trans.bnd} to demonstrate that a bound of the form~\eqref{eq:strong.diff.bnd} is satisfied by the differential~\eqref{eq:diff.bary.map} of the barycentric coordinate map defining a Riemannian simplex $\riemsplxs$
\begin{equation*}
  d\bmap = - \left(\conn^{\man}\nu \right)^{-1} \partial_u \nu,
\end{equation*}
and find a bound on the scale that allows us to exploit \Propref{prop:generic.triangulation}.

Choose a vertex $p_0$ of $\riemsplxs$, and an arbitrary linear isometry $u: T_{p_0}\man \to \ren$ to establish a coordinate system on $\tanspace{p_0}{\man}$ so that $v_0(p_0)$ remains the origin. Let $P$ be the matrix whose $i^{\text{th}}$ column is $v_i(p_0)$. For $x \in \bmapball$, rather than placing an arbitrary coordinate system on $T_x\man$, we identify $T_{p_0}\man$ and $T_x\man$ by the parallel translation operator $T_{p_0x}$, i.e., use $u \circ T_{p_0x}$ for coordinates.  Let $\tilde{P}$ be the matrix whose $i^{\text{th}}$ column is $v_i(x) - v_0(x)$.

Now the map 
\begin{equation*}
  F: v \mapsto \exp_x^{-1} \circ \exp_{p_0}(v) - v_0(x)
\end{equation*}
can be considered as a map $\ren \supset U \to \ren$, and the matrix whose $i^{\text{th}}$ column is $F(v_i(p_0))$ is $\tilde{P}$.  It follows from \Propref{prop:exp.trans.bnd} that if $\scale < \frac{1}{2}\rho_0$, then for any $u \in \ballEn{0}{\scale}$, we have $\onorm{(dF)_u - \Id} \leq \eta$, with $\eta = 6\curvabsbnd \scale^2$.

\Lemref{lem:bound.disp} implies a bound on the difference of the column vectors of $P$ and $\tilde{P}$:
\begin{equation*}
  \norm{v_i(p_0) - (v_i(x) - v_0(x))} \leq \eta \norm{v_i(p_0)} \leq \eta \longedge{\splxs(p_0)}.
\end{equation*}
 It follows that $\onorm{P - \tilde{P}} \leq \sqrt{n}\eta \longedge{\splxs(p_0)}$. Assume also that  $\thickness{\splxs(p_0)} \geq \thickbnd$. Then, recalling \Eqnref{eq:matrix.of.deriv}, and recognising that $T_{xp_0}$ is represented by the identity matrix in our coordinate systems, we have
 \begin{align*}
   \onorm{- \left(\partial_u \nu \right) - T_{xp_0}}
   &= \onorm{  \tilde{P}P^{-1} - PP^{-1} }\\
   &= \onorm{  \left(\tilde{P} - P \right) P^{-1} }\\
   &\leq \sqrt{n}6 \curvabsbnd \scale^2 \longedge{\splxs(p_0)} \onorm{P^{-1}}\\
   &\leq \frac{\sqrt{n}6\curvabsbnd \scale^2 \longedge{\splxs(p_0)}}{\sqrt{n} \thickbnd \longedge{\splxs(p_0)}}
   \qquad \text{by \Lemref{lem:bound.skP}}\\
   &\leq \frac{6 \curvabsbnd \scale^2}{\thickbnd}.
 \end{align*}

Buser and Karcher show \cite[\S 8.1.3]{buser1981} that for any $x \in \ballM{p}{h}$, with $h < \rho_0$, we have
\begin{align*}
  \onorm{(\conn^{\man}\nu)_x - \Id} \leq 2\curvabsbnd h^2.
\end{align*}
When $h < \frac{1}{2}\rho_0$, we have $2\curvabsbnd h^2 < \frac{1}{2}$, and \Lemref{lem:inverse.bnd} yields
\begin{equation*}
  \onorm{(\conn^{\man}\nu)^{-1} - \Id} \leq 4\curvabsbnd h^2.  
\end{equation*}

Therefore we have, when $\bmap(u) = x$
\begin{equation}
  \label{eq:bnd.db}
  \begin{split}
  \onorm{d\bmap_u - T_{xp_0}}
  &= \onorm{- \left(\conn^{\man}\nu \right)^{-1} \partial_u \nu - T_{xp_0}}\\
  &\leq 4\curvabsbnd h^2 + \frac{6 \curvabsbnd h^2}{\thickbnd}
  + 4\curvabsbnd h^2
  \left( \frac{6 \curvabsbnd h^2}{\thickbnd} \right)\\
  &\leq \frac{14 \curvabsbnd h^2}{\thickbnd},
  \end{split}
\end{equation}
using $h < \frac{\pi}{8\sqrt{\curvabsbnd}}$.

Finally, in order to employ \Propref{prop:generic.triangulation} we consider the composition $\exp^{-1}_{p_0} \circ \bmap$. From \Lemref{lem:clean.strong.rauch} and \Lemref{lem:inverse.bnd} we have that
\begin{align*}
  \onorm{ (d \exp^{-1}_{p_0})_x - T_{p_0x}} \leq \curvabsbnd h^2.
\end{align*}
Therefore, since $T_{p_0x} = T_{xp_0}^{-1}$ we have
\begin{align*}
  \onorm{ d(\exp_{p_0}^{-1} \circ \bmap)_u - \Id }
  &\leq \curvabsbnd h^2 + \frac{14 \curvabsbnd h^2}{\thickbnd}
  + \curvabsbnd h^2 \left( \frac{14 \curvabsbnd h^2}{\thickbnd} \right)\\
  &\leq \frac{17 \curvabsbnd h^2}{\thickbnd}.
\end{align*}

In order to meet the conditions of \Propref{prop:generic.triangulation}, we require
\begin{align*}
  \frac{17 \curvabsbnd \scale^2}{\thickbnd}
  \leq \frac{1}{2}n\thickbnd,
\end{align*}
or
\begin{align*}
\scale \leq \frac{\sqrt{n}\thickbnd}{6 \sqrt{\curvabsbnd}}.
\end{align*}

We obtain

\begin{mainthm}
  \label{thm:triangulation}
  Suppose $\man$ is a compact $n$-dimensional Riemannian manifold with sectional curvatures $\seccurv$ bounded by $\abs{\seccurv} \leq \curvabsbnd$, and $\acplx$ is an abstract simplicial complex with finite vertex set $\pts \subset \man$.  
Define a quality parameter $\thickbnd > 0$, and let
    \begin{align*}
      \scale = \min \left\{ \frac{\injradM}{4},
      \frac{\sqrt{n}\thickbnd}{6 \sqrt{\curvabsbnd}} \right\}.
    \end{align*}
If
  \begin{enumerate}
  \item For every $p \in \pts$, the vertices of $\str{p}$ are contained in $\ballM{p}{h}$, and the balls $\{\ballM{p}{h}\}_{p \in \pts}$ cover $\man$.
  \item For every $p \in \pts$, the restriction of the inverse of the exponential map $\exp_p^{-1}$ to the vertices of $\str{p} \subset \acplx$ defines a piecewise linear embedding of $\carrier{\str{p}}$ into $\tanspace{p}{\man}$, realising $\str{p}$ as a full star such that every simplex $\splxs(p)$ has thickness $\thickness{\splxs(p)} \geq \thickbnd$.
  \end{enumerate}
  then $\acplx$ triangulates $\man$, and the triangulation is given by the barycentric coordinate map on each simplex.
\end{mainthm}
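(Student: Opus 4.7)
The plan is to recognize this theorem as a direct application of Proposition~\ref{prop:generic.triangulation}, with essentially every required estimate already assembled in the preceding subsection. For each vertex $p \in \pts$ I would choose an arbitrary linear isometry $u_p: \tanspace{p}{\man} \to \R^n$ and set $\phi_p = u_p \circ \exp_p^{-1}$, taking $W_p$ to be a geodesic ball about $p$ of radius slightly less than $\injradM$. The triangulation $H: \carrier{\acplx} \to \man$ would be defined simplex-wise by the Riemannian barycentric coordinate map $\bmap$ of Definition~\ref{def:riem.splx}; its smoothness on $\acplx$ and consistency on shared faces is automatic because $\bmap$ is intrinsically determined by the vertex set of each simplex.

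Hypothesis 2 of the theorem directly furnishes the piecewise linear embedding $\lmap_p = u_p \circ \exp_p^{-1}$ required by condition~1 of the proposition, realizing $\str{p}$ as a full $\thickbnd$-thick star. Hypothesis 1, combined with the fact that $\exp_p$ is a radial isometry, places all lifted vertices inside $\ballEn{0}{h}$, so the convex hulls lie in $\carrier{\ccplx_p} \subset \ballEn{\lmap_p(p)}{h}$. The constraint $h \leq \injradM/4$ ensures that $\phi_p$ is a diffeomorphism on a neighborhood containing $\cballEn{0}{\frac{3}{2}h}$, so applying Lemma~\ref{lem:poly.rauch} to $\phi_p^{-1} = \exp_p \circ u_p^{-1}$ yields
\begin{equation*}
  \onorm{(d\phi_p^{-1})_u} \leq 1 + \frac{9\curvabsbnd h^2}{8} \leq \frac{4}{3},
\end{equation*}
where the last inequality follows from $\curvabsbnd h^2 \leq 1/36$, which is automatic from the stated scale bound together with $n\thickbnd \leq 1$. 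This verifies condition~2.

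The crux is condition~3, the bound $\onorm{(dF_p)_u - \Id} \leq \frac{n\thickbnd}{2}$ on $F_p = \phi_p \circ H \circ \lmap_p^{-1}$. Under the parallel-translation identification of $T_x\man$ with $T_p\man$ used in the preceding subsection, $F_p$ coincides (up to the linear isometry $u_p$) with $\exp_p^{-1} \circ \bmap$, and the computation culminating in~\eqref{eq:bnd.db} together with the subsequent composition with $d\exp_p^{-1}$ gives
\begin{equation*}
  \onorm{d(\exp_p^{-1} \circ \bmap)_u - \Id} \leq \frac{17\curvabsbnd h^2}{\thickbnd}.
\end{equation*}
The scale choice $\scale \leq \frac{\sqrt{n}\thickbnd}{6\sqrt{\curvabsbnd}}$ is calibrated precisely so that $\frac{17\curvabsbnd h^2}{\thickbnd} \leq \frac{n\thickbnd}{2}$, completing the verification.

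I expect no genuine analytic obstacle; the main chore is simply bookkeeping, confirming that the various constant-level inequalities — the injectivity-radius condition, the $4/3$ Rauch bound on $d\phi_p^{-1}$, and the thickness-dependent bound on $dF_p - \Id$ — are all simultaneously compatible with the single scale parameter~\eqref{eq:sampling.density}. All the heavy technical machinery (the strong Rauch theorem, the parallel-transport comparison, and the bound on $\conn^{\man}\nu$ near the identity) has already been expended in deriving~\eqref{eq:bnd.db}, so nothing beyond assembling the ingredients is needed before invoking Proposition~\ref{prop:generic.triangulation} to conclude that $H$ is a smooth triangulation.
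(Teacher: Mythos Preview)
Your proposal is correct and takes essentially the same approach as the paper: the theorem is obtained by verifying the hypotheses of Proposition~\ref{prop:generic.triangulation} with $\phi_p = u_p \circ \exp_p^{-1}$ and $H$ given simplexwise by the barycentric coordinate map, the key estimate being exactly the bound $\onorm{d(\exp_p^{-1}\circ\bmap)_u - \Id} \leq 17\curvabsbnd h^2/\thickbnd$ derived in \Secref{sec:triang.riem.splxs}. If anything, you are slightly more explicit than the paper in checking condition~2 of the proposition (the $4/3$ bound on $\onorm{d\phi_p^{-1}}$), which the paper leaves implicit.
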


%

\section{The piecewise flat metric}
\label{sec:pwf.metric}

The complex $\acplx$ described in \Thmref{thm:triangulation} naturally inherits a piecewise flat metric from the construction. The length assigned to an edge $\asimplex{p,q} \in \acplx$ is the geodesic distance in $\man$ between its endpoints: $\ell_{pq} = \distM{p}{q}$. We first examine, in \Secref{sec:abstract.eucl.splx}, conditions which ensure that this assignment of edge lengths does indeed make each $\splxs \in \acplx$ isometric to a Euclidean simplex. With this piecewise flat metric on $\acplx$, the barycentric coordinate map is a bi-Lipschitz map between metric spaces $H:\carrier{\acplx} \to \man$. In \Secref{sec:metric.distortion} we estimate the metric distortion of this map.

Several of the lemmas in this section are generalisations of lemmas that appeared in \cite[\S A.1]{boissonnat2013manmesh.inria}. The arguments are essentially the same, but we have included the proofs here for convenience.

\subsection{Euclidean simplices defined by edge lengths}
\label{sec:abstract.eucl.splx}

If $G$ is a symmetric positive definite $n \times n$ matrix, then it can be written as a Gram matrix, $G = \transp{P}P$ for some $n \times n$ matrix $P$. Then $P$ describes a Euclidean simplex with one vertex at the origin, and the other vertices defined by the column vectors. The matrix $P$ is not unique, but if $G= \transp{Q}Q$, then $Q = OP$ for some linear isometry $O$. Thus a symmetric positive definite matrix defines a Euclidean simplex, up to isometry. 

If $\splxs = \asimplex{p_0, \ldots, p_n} \subset \bmapball$, is the vertex set of a Riemannian simplex $\riemsplxs$, we define the numbers $\ell_{ij} = \distM{p_i}{p_j}$. These are the edge lengths of a Euclidean simplex $\splxsE$ if and only if the matrix $G$ defined by
\begin{equation}
  \label{eq:G.mat}
  G_{ij} = \frac{1}{2}(\ell_{0i}^2 + \ell_{0j}^2 - \ell_{ij}^2)
\end{equation}
is positive definite.

We would like to use the smallest eigenvalue of $G$ to estimate the thickness of $\splxsE$, however, an unfortunate choice of vertex labels can prevent us from doing this easily. We make use of the following observation:
\begin{lem}
  \label{lem:low.bnd.thick}
  Suppose $\splxs = \asimplex{v_0, \ldots, v_k} \subset \R^n$ is a Euclidean $k$-simplex, and let $P$ be the $n\times k$ matrix whose $i^{\text{th}}$ column is $v_i - v_0$. If for some $i\neq 0$, an altitude at least as small as $\gsplxalt_0$ is realised, i.e,  $\gsplxalt_i \leq \gsplxalt_0$, then
  \begin{equation*}
    \thickness{\splxs} \geq \frac{\sing{k}{P}}{k \longedge{\splxs}}.
  \end{equation*}
\end{lem}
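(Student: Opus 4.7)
The strategy is to combine \Lemref{lem:bound.skP} with the elementary fact that the operator norm of a matrix is bounded below by the norm of any of its rows, and then use the hypothesis $\gsplxalt_i\le \gsplxalt_0$ (for some $i\neq 0$) to ensure that the minimal altitude is one of those detected by the pseudo-inverse.

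First, recall from \Lemref{lem:bound.skP} that the rows of $\pseudoinv{P}=(\transp{P}P)^{-1}\transp{P}$ are the vectors $\transp{w_j}$ with $\norm{w_j}=\gsplxalt_j^{-1}$, indexed by $j\in\{1,\dots,k\}$, i.e., by the vertices \emph{other than} $v_0$. Since $P$ has full rank, the operator norm of its pseudo-inverse is the reciprocal of its smallest singular value, $\onorm{\pseudoinv{P}}=\sing{k}{P}^{-1}$. On the other hand, plugging the unit vector $\xi_j=w_j/\norm{w_j}\in\ren$ into the definition of the operator norm yields
\begin{equation*}
\onorm{\pseudoinv{P}}\;\ge\;\bigl\|\pseudoinv{P}\,\xi_j\bigr\|\;\ge\;\bigl|\bracketprod{w_j}{\xi_j}\bigr|\;=\;\norm{w_j}\;=\;\gsplxalt_j^{-1},
\end{equation*}
for every $j\in\{1,\dots,k\}$. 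Rearranging gives $\sing{k}{P}\le \gsplxalt_j$ for every $j\neq 0$, hence $\sing{k}{P}\le \min_{j\neq 0}\gsplxalt_j$.

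Now invoke the hypothesis: some altitude $\gsplxalt_i$ with $i\neq 0$ satisfies $\gsplxalt_i\le \gsplxalt_0$. This guarantees that the overall minimum altitude is achieved at an index in $\{1,\dots,k\}$, i.e., $\min_{v\in\splxs}\gsplxalt_v=\min_{j\neq 0}\gsplxalt_j$. Combining with the previous inequality, $\sing{k}{P}\le \min_{v\in\splxs}\gsplxalt_v$. Dividing both sides by $k\longedge{\splxs}$ and comparing with the definition~\eqref{eq:defn.thickness} of thickness yields the required bound $\thickness{\splxs}\ge \sing{k}{P}/(k\longedge{\splxs})$.

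The only subtlety — and the reason the hypothesis $\gsplxalt_i\le \gsplxalt_0$ is needed — is that the pseudo-inverse characterization of altitudes provided by \Lemref{lem:bound.skP} only covers the $k$ vertices other than the chosen origin $v_0$. Without the hypothesis, the minimal altitude could be $\gsplxalt_0$, which is not directly controlled by the row norms of $\pseudoinv{P}$, so the argument would not close. I do not expect any further technical obstacle beyond this bookkeeping issue.
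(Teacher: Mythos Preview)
Your proof is correct and follows essentially the same route as the paper's. Both arguments use \Lemref{lem:bound.skP} to identify the rows of $\pseudoinv{P}$ with the inverse altitudes $\gsplxalt_j^{-1}$ for $j\neq 0$, then invoke the elementary fact that $\sing{1}{\pseudoinv{P}}=\sing{k}{P}^{-1}$ is bounded below by the norm of any row, and finally use the hypothesis to ensure the minimal altitude is realised among the indices $j\neq 0$. The paper phrases this by fixing a vertex $v_i$ of minimal altitude and reading off $\sing{1}{\pseudoinv{P}}\geq \gsplxalt_i^{-1}=(k\gthickness\glongedge)^{-1}$, whereas you bound $\sing{k}{P}\leq \gsplxalt_j$ for every $j\neq 0$ and then take the minimum; these are trivially equivalent formulations. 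One small omission: you should note (as the paper does) that the degenerate case $\sing{k}{P}=0$ is trivial, so that ``$P$ has full rank'' is a legitimate standing assumption.
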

\begin{proof}
  We assume that $\splxs$ is non-degenerate, since otherwise the bound
  is trivial.  If $v_i$ is a vertex of minimal altitude, then by
  \Lemref{lem:bound.skP}, the $i^{\text{th}}$ row of the
  pseudo-inverse $\pseudoinv{P}$ is
  given by $\transp{w_i}$, where
  \begin{equation*}
    \norm{w_i} = \gsplxalt_i^{-1} = (k\gthickness \glongedge)^{-1}.
  \end{equation*}
  It follows then that $\sing{1}{\pseudoinv{P}} \geq (k \gthickness \glongedge)^{-1}$, and therefore $\sing{k}{P} \leq k \gthickness \glongedge$, yielding the stated bound.
\end{proof}

If $G$ is positive definite, then we may write $G= \transp{P}P$, where $P$ is a matrix describing $\splxsE = \simplex{v_0, \ldots, v_k}$, with the edge lengths $\{\ell_{ij}\}$ dictating the vertex labelling. If $\mu_k(G) = \sing{k}{P}^2$ is the smallest eigenvalue of $G$, then \emph{provided some vertex other than $v_0$ realises the smallest altitude in $\splxsE$}, \Lemref{lem:low.bnd.thick} yields
\begin{equation}
  \label{eq:eval.bnd.thick}
  \thickness{\splxsE} \geq \frac{\sqrt{\mu_k(G)}}{k \longedge{\splxs}}.
\end{equation}

For our current purposes, we can ensure the existence, and bound the thickness of $\splxsE$ by comparing it with a related simplex such as $\splxs(p_0)$. To this end we employ the following observation (where $\tilde{\splxs}$ plays the role of $\splxs(p_0)$):
\begin{lem}
  \label{lem:splx.gram}
  Suppose $\tilde{\splxs} = \asimplex{\tilde{v}_0, \ldots, \tilde{v}_k}$ is a Euclidean $k$-simplex, and $\{{\ell}_{ij}\}$ is a set of positive numbers defined for all $0 \leq i\neq j \leq k$ such that ${\ell}_{ij} = {\ell}_{ji}$, and
  \begin{equation*}
    \abs{\norm{\tilde{v}_i - \tilde{v}_j} - {\ell}_{ij}} \leq
    \metlipconst \longedge{\tilde{\splxs}}.
  \end{equation*}
  Let $\tilde{P}$ be the matrix whose $i^{\text{th}}$ column is $\tilde{v}_i - \tilde{v}_0$, and define the matrix $G$ by
  \begin{equation*}
    G_{ij} = \frac{1}{2}({\ell}_{0i}^2 + {\ell}_{0j}^2 - {\ell}_{ij}^2).    
  \end{equation*}
Let $E$ be the matrix that records the difference between $G$ and the Gram matrix $\transp{\tilde{P}}\tilde{P}$:
  \begin{equation*}
    G = \transp{\tilde{P}} \tilde{P} + E.
  \end{equation*}
  If $\metlipconst \leq \frac{2}{3}$, then the entries of $E$ are
  bounded by $\abs{E_{ij}} \leq 4 \metlipconst \longedge{\tilde{\splxs}}^2$,
  and in particular
  \begin{equation}
    \label{eq:gram.err.bnd}
    \onorm{E} \leq 4 k \metlipconst \longedge{\tilde{\splxs}}^2.
  \end{equation}
\end{lem}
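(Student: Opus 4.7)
The plan is to reduce the claim to an entrywise estimate on $E$ using the polarization identity. First I would write the Gram matrix entry
\[
 (\transp{\tilde P}\tilde P)_{ij} \;=\; (\tilde v_i-\tilde v_0)\cdot(\tilde v_j-\tilde v_0)
 \;=\; \tfrac{1}{2}\bigl(\tilde\ell_{0i}^2+\tilde\ell_{0j}^2-\tilde\ell_{ij}^2\bigr),
\]
where $\tilde\ell_{ij}=\norm{\tilde v_i-\tilde v_j}$, so that
\[
 E_{ij} \;=\; G_{ij} - (\transp{\tilde P}\tilde P)_{ij}
 \;=\; \tfrac{1}{2}\bigl[(\ell_{0i}^2-\tilde\ell_{0i}^2) + (\ell_{0j}^2-\tilde\ell_{0j}^2) - (\ell_{ij}^2-\tilde\ell_{ij}^2)\bigr].
\]

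The second step is to control each difference of squares. By hypothesis $\abs{\ell_{ij}-\tilde\ell_{ij}}\leq\metlipconst\longedge{\tilde\splxs}$, and since $\tilde\ell_{ij}\leq\longedge{\tilde\splxs}$ we get $\ell_{ij}+\tilde\ell_{ij}\leq(2+\metlipconst)\longedge{\tilde\splxs}$. Factoring $a^2-b^2=(a+b)(a-b)$ gives
\[
 \abs{\ell_{ij}^2-\tilde\ell_{ij}^2}\;\leq\; \metlipconst(2+\metlipconst)\longedge{\tilde\splxs}^2.
\]
Summing the three contributions in the formula for $E_{ij}$ yields $\abs{E_{ij}}\leq\tfrac{3}{2}\metlipconst(2+\metlipconst)\longedge{\tilde\splxs}^2$, and substituting the standing assumption $\metlipconst\leq\tfrac{2}{3}$ gives the clean bound $\abs{E_{ij}}\leq 4\metlipconst\longedge{\tilde\splxs}^2$ claimed in the lemma.

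Finally, for the operator norm I would use that $E$ is a symmetric $k\times k$ matrix, so by Gershgorin (or simply by $\onorm{E}\leq\max_i\sum_j\abs{E_{ij}}$ for symmetric matrices) one has $\onorm{E}\leq k\max_{i,j}\abs{E_{ij}}\leq 4k\metlipconst\longedge{\tilde\splxs}^2$, which is \eqref{eq:gram.err.bnd}. There is no real obstacle here; the only mildly delicate point is making sure the constant $4$ is genuinely attained under the assumption $\metlipconst\leq 2/3$, which is a matter of arithmetic (it comes exactly from $\tfrac{3}{2}\cdot\tfrac{8}{3}=4$). Everything else is algebraic manipulation of the polarization identity together with the Lipschitz-type hypothesis on the edge lengths.
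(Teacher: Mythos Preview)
Your proof is correct and follows essentially the same approach as the paper: the cosine/polarization identity to express $E_{ij}$, the factorisation $a^2-b^2=(a+b)(a-b)$ to get the entrywise bound $\tfrac{3}{2}(2+\metlipconst)\metlipconst\longedge{\tilde\splxs}^2\leq 4\metlipconst\longedge{\tilde\splxs}^2$, and then a passage from the entrywise bound to the operator-norm bound. The only cosmetic difference is in this last step: the paper argues via $\onorm{E}\leq\sqrt{k}\cdot(\text{max column norm})\leq\sqrt{k}\cdot\sqrt{k}\cdot(\text{max entry})$, whereas you invoke Gershgorin/row-sum for the symmetric matrix $E$; both yield the same factor $k$.
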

\begin{proof}
  Let $\tilde{\ell}_{ij} = \norm{\tilde{v}_i- \tilde{v}_j}$. By the cosine rule we have
  \begin{equation*}
    \left[\transp{\tilde{P}} \tilde{P} \right]_{ij} = \frac{1}{2}(\tilde{\ell}_{0i}^2 + \tilde{\ell}_{0j}^2 - \tilde{\ell}_{ij}^2),
  \end{equation*}
  and we obtain a bound on the
  magnitude of the coefficients of $E$:
  \begin{equation*}
    \begin{split}
      \abs{ G_{ij} - \left[\transp{\tilde{P}} \tilde{P} \right]_{ij} }
      &\leq
      \frac{1}{2} \left( \abs{ \ell_{0i}^2 - \tilde{\ell}_{0i}^2 }
        + \abs{ \ell_{0j}^2  - \tilde{\ell}_{0j}^2 }
        + \abs{ \ell_{ij}^2 - \tilde{\ell}_{ij}^2 }
      \right) \\
      &\leq \frac{3}{2}(2 + \metlipconst)\metlipconst
      \longedge{\tilde{\splxs}}^2\\
      &\leq 4 \metlipconst \longedge{\tilde{\splxs}}^2.
    \end{split}
  \end{equation*}

  This leads us to a bound on $\sing{1}{E} = \norm{E}$. Indeed, the
  magnitude of the column vectors of $E$ is bounded by $\sqrt{k}$
  times a bound on the magnitude of their coefficients, and the
  magnitude of $\sing{1}{E}$ is bounded by $\sqrt{k}$ times a bound on
  the magnitude of the column vectors. We obtain
  \Eqnref{eq:gram.err.bnd}.
\end{proof}

We have the following extension of the ``Thickness under distortion'' \Lemref{lem:intrinsic.thick.distortion}  (\cite[\S 4.2]{boissonnat2013manmesh.inria}):
\begin{lem}[Abstract Euclidean simplex]
  \label{lem:abstract.eucl.splx}
  Suppose $\tilde{\splxs} = \asimplex{\tilde{v}_0, \ldots, \tilde{v}_k} \subset \R^n$, and $\{\ell_{ij}\}_{0\leq i,j \leq k}$ is a set of positive numbers defined for all $0 \leq i\neq j \leq k$ such that ${\ell}_{ij} = {\ell}_{ji}$, and such that
  \begin{equation*}
    \abs{\norm{\tilde{v}_i - \tilde{v}_j} - \ell_{ij}} \leq
    \metlipconst \longedge{\tilde{\splxs}}
  \end{equation*}
  for all $0 \leq i < j \leq k$.  

  If
  \begin{equation}
    \label{eq:defn.metlipconst}
    \metlipconst =  \frac{\eta \thickness{\tilde{\splxs}}^2}{4}
   \qquad \text{ with } \quad 0 \leq \eta \leq 1,
  \end{equation}
  then there exists a Euclidean simplex $\splxs =
  \asimplex{{v}_0, \ldots, {v}_k}$ whose edge lengths are described by the numbers $\ell_{ij}$. Let $\tilde{P}$ and $P$ be matrices whose $i^{\text{th}}$ column is given by $\tilde{v}_i - \tilde{v}_0$, and $v_i-v_0$ respectively. Then
  \begin{equation*}
    \sing{k}{{P}} \geq (1 - \eta)\sing{k}{\tilde{P}},
  \end{equation*}
  and the thickness of $\splxs$ satisfies
  \begin{equation*}
    \thickness{\splxs} \geq \frac{4}{5\sqrt{k}}(1 - \eta)
    \thickness{\tsplxs}. 
  \end{equation*}
\end{lem}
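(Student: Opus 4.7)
The plan is to convert the question into a perturbation problem for the Gram-like matrix associated with the prescribed edge lengths. Define
\[
G_{ij} = \tfrac{1}{2}(\ell_{0i}^2 + \ell_{0j}^2 - \ell_{ij}^2),
\]
as in~\eqref{eq:G.mat}. A Euclidean $k$-simplex $\splxs$ with these edge lengths exists precisely when $G$ is positive semi-definite, in which case any factorization $G = \transp{P}P$ supplies the vertex matrix. I would use Lemma~\ref{lem:splx.gram} to write $G = \transp{\tilde{P}}\tilde{P} + E$ with $\onorm{E} \leq 4k\metlipconst \longedge{\tsplxs}^2$, and then control the smallest eigenvalue of $G$ by a standard Weyl perturbation argument.

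The hypothesis $\metlipconst \leq 2/3$ of Lemma~\ref{lem:splx.gram} is automatic because $\thickness{\tsplxs} \leq 1/k$ (immediate from~\eqref{eq:defn.thickness} together with $\gsplxalt_v \leq \glongedge$), which forces $\metlipconst \leq 1/(4k^2) \leq 1/4$. Substituting $\metlipconst = \eta \thickness{\tsplxs}^2/4$ and invoking the lower bound $\sing{k}{\tilde{P}}^2 \geq k \thickness{\tsplxs}^2 \longedge{\tsplxs}^2$ from Lemma~\ref{lem:bound.skP} gives $\onorm{E} \leq \eta \sing{k}{\tilde{P}}^2$. Weyl's inequality then yields $\mu_k(G) \geq (1-\eta)\sing{k}{\tilde{P}}^2 \geq 0$, so $G = \transp{P}P$ for some $P$ defining the required simplex $\splxs$, and
\[
\sing{k}{P} = \sqrt{\mu_k(G)} \geq \sqrt{1-\eta}\,\sing{k}{\tilde{P}} \geq (1-\eta)\sing{k}{\tilde{P}},
\]
where the final step uses $\sqrt{1-\eta} \geq 1-\eta$ on $[0,1]$.

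For the thickness estimate I would combine Lemma~\ref{lem:low.bnd.thick} with the longest-edge bound $\longedge{\splxs} \leq (1+\metlipconst)\longedge{\tsplxs} \leq \tfrac{5}{4}\longedge{\tsplxs}$ (since $\metlipconst \leq 1/4$), obtaining
\[
\thickness{\splxs} \;\geq\; \frac{\sing{k}{P}}{k\longedge{\splxs}} \;\geq\; \frac{4 \sqrt{1-\eta}\,\sqrt{k}\,\thickness{\tsplxs}\,\longedge{\tsplxs}}{5 k \longedge{\tsplxs}} \;=\; \frac{4\sqrt{1-\eta}\,\thickness{\tsplxs}}{5\sqrt{k}} \;\geq\; \frac{4(1-\eta)\thickness{\tsplxs}}{5\sqrt{k}}.
\]

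The main obstacle is that Lemma~\ref{lem:low.bnd.thick} requires some vertex $v_i$ with $i \neq 0$ to realize an altitude no larger than $\gsplxalt_0$ in $\splxs$; if $v_0$ happens to be the \emph{unique} minimum-altitude vertex of the constructed $\splxs$, the inequality $\thickness{\splxs} \geq \sing{k}{P}/(k\longedge{\splxs})$ cannot be invoked directly. I would resolve this by a preliminary relabeling of the vertex indices: pick any $j \neq 0$, swap $v_0$ with $v_j$, and relabel $\tsplxs$ and the data $\{\ell_{ij}\}$ correspondingly. All of the preceding estimates are covariant under simultaneous relabeling (the thickness and longest edge of $\tsplxs$ and the smallest singular value of the associated matrix depend only on the underlying simplex, not on which vertex is designated as the origin), so the chain of inequalities reruns verbatim in the relabeled setting, and Lemma~\ref{lem:low.bnd.thick} now applies to give the stated thickness bound.
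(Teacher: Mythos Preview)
Your proof is correct and follows essentially the same route as the paper: write $G = \transp{\tilde{P}}\tilde{P} + E$, bound $\onorm{E}$ via Lemma~\ref{lem:splx.gram}, apply an eigenvalue perturbation (the paper uses the equivalent Rayleigh-quotient computation in place of Weyl's inequality), and then relabel to invoke Lemma~\ref{lem:low.bnd.thick}. One small correction to your parenthetical: $\sing{k}{\tilde{P}}$ is \emph{not} invariant under the choice of origin vertex, but your relabeling argument survives because what is actually used is the lower bound $\sing{k}{\tilde{P}} \geq \sqrt{k}\,\thickness{\tsplxs}\,\longedge{\tsplxs}$ from Lemma~\ref{lem:bound.skP}, and that bound holds for any labeling.
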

\begin{proof}
  If $\tsplxs$ is degenerate, then by \eqref{eq:defn.metlipconst}, $\{\ell_{ij}\}$ is the set of edge lengths of $\tsplxs$ and there is nothing to prove. Therefore, assume $\thickness{\tsplxs}>0$.

  Let $G$ be the matrix defined by \Eqnref{eq:G.mat}, and
  define the matrix $E$ by $G= \transp{\tilde{P}}\tilde{P} + E$, and let $x$ be a unit eigenvector of $G$ associated with the smallest eigenvalue $\mu_k$. Then
  \begin{equation*}
    \begin{split}
      \mu_k = \transp{x}Gx
      &= \transp{x}\transp{\tilde{P}}\tilde{P}x + \transp{x}Ex\\
      &\geq \sing{k}{\tilde{P}}^2 - \sing{1}{E}\\
    &= \left(1 - \frac{\sing{1}{E}}{\sing{k}{\tilde{P}}^2} \right) \sing{k}{\tilde{P}}^2.
    \end{split}
  \end{equation*}
  From \Lemref{lem:bound.skP} we have $\sing{k}{\tilde{P}}^2 \geq k \thickness{\tsplxs}^2 \longedge{\tsplxs}^2$, and by \Lemref{lem:splx.gram} $\sing{1}{E} \leq 4k\metlipconst \longedge{\tsplxs}^2$, so by the definition~\eqref{eq:defn.metlipconst} of $\metlipconst$, we have that 
  \begin{equation*}
    \mu_k \geq (1 - \eta)\sing{k}{\tilde{P}}^2,
  \end{equation*}
  and thus $G$ is positive semi-definite, and the first inequality is satisfied because $\mu_k = \sing{k}{P}^2$ and $\sqrt{1-\eta} \geq 1-\eta$.

  In order to obtain the thickness bound, we employ \Lemref{lem:low.bnd.thick}. Since thickness is independent of the vertex labelling, we may assume that some vertex other than ${v}_0$ realises the minimal altitude in ${\splxs}$ (if necessary, we relabel the vertices of $\tsplxs$ and ${\splxs}$, maintaining the correspondence).  Then using \Lemref{lem:low.bnd.thick} and \Lemref{lem:bound.skP} we have
  \begin{equation*}
    k \thickness{{\splxs}} \longedge{{\splxs}}
    \geq \sing{k}{{P}} \geq (1 - \eta)\sing{k}{\tilde{P}}
    \geq (1- \eta)\sqrt{k}\thickness{\tsplxs}\longedge{\tsplxs}.
  \end{equation*}
  The stated thickness bound follows since
  $\frac{\longedge{\tsplxs}}{\longedge{\splxs}} \geq \frac{1}{1 +
    \metlipconst} \geq \frac{4}{5}$.
\end{proof}


Now we examine whether the simplices of the complex $\acplx$ of \Thmref{thm:triangulation} meet the requirements of \Lemref{lem:abstract.eucl.splx}.  If $\splxs \in \acplx$, with $p \in \splxs$, then we can use the Rauch theorem~\ref{lem:poly.rauch} to compare $\splxs$ with the Euclidean simplex $\splxs(p) \in T_p\man$. Under the assumptions of \Thmref{thm:triangulation}, we have $\onorm{d\exp_p} \leq 1 +\frac{\curvabsbnd \scale^2}{2}$, and $\onorm{d\exp_p^{-1}} \leq 1 + \frac{\curvabsbnd \scale^2}{3}$. Thus
\begin{align*}
  \ell_{ij} - \norm{v_i(p) - v_j(p)} \leq \frac{\curvabsbnd \scale^2}{2}
  \norm{v_i(p) - v_j(p)},
\end{align*}
and
\begin{align*}
  \norm{v_i(p) - v_j(p)} - \ell_{ij}
  \leq \frac{\curvabsbnd \scale^2}{3} \ell_{ij}
  \leq \frac{\curvabsbnd \scale^2}{3}
  \left(1 + \frac{\curvabsbnd \scale^2}{2} \right)
  \norm{v_i(p) - v_j(p)}
  \leq \frac{\curvabsbnd \scale^2}{2} \norm{v_i(p) - v_j(p)},
\end{align*}
and we can use
\begin{equation}
  \label{eq:explicit.metlipconst}
  \metlipconst = \frac{\curvabsbnd \scale^2}{2}
\end{equation}
in \Lemref{lem:abstract.eucl.splx}. Thus in order to guarantee that the $\ell_{ij}$ describe a non-degenerate Euclidean simplex, we require that
\begin{equation*}
  \curvabsbnd \scale^2 = \frac{\eta\thickbnd^2}{2},
\end{equation*}
for some non-negative $\eta < 1$.

Under the conditions of \Thmref{thm:triangulation} we may have $\scale^2 \curvabsbnd = \frac{n \thickbnd^2}{36}$, which gives us $\eta = \frac{n}{18}$. Thus when $n \geq 18$ we require stronger bounds on the scale than those imposed by \Thmref{thm:triangulation} if we wish to ensure the existence of a piecewise flat metric on $\acplx$. Reducing the curvature controlled constraint on $\scale$ in \Thmref{thm:triangulation} by a factor of $1/\sqrt{n}$ gives us $\eta = \frac{1}{18}$, and \Lemref{lem:abstract.eucl.splx} yields:
\begin{prop}
  \label{prop:pwflat.metric.exists}
  If the requirements of \Thmref{thm:triangulation}, are satisfied when the scale parameter~\eqref{eq:sampling.density} is replaced with
  \begin{equation*}
    \scale = \min \left\{ \frac{\injradM}{4},
      \frac{\thickbnd}{6 \sqrt{\curvabsbnd}} \right\},
  \end{equation*}
  then the geodesic distances between the endpoints of the edges in $\acplx$ defines a piecewise flat metric on $\acplx$ such that each simplex $\splxs \in \acplx$ satisfies
  \begin{equation*}
    \thickness{\splxs} > \frac{3}{4\sqrt{n}}\thickbnd.
  \end{equation*}
\end{prop}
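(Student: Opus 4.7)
The plan is to verify, for each simplex $\splxs \in \acplx$, that the tuple $\{\ell_{ij}\} = \{\distM{p_i}{p_j}\}$ arises as the edge lengths of a genuine Euclidean simplex, and then to show these local realisations glue to a global piecewise flat metric on $\carrier{\acplx}$. Lemma~\ref{lem:abstract.eucl.splx} is the main tool: applied to the already-available Euclidean model $\splxs(p) \subset \tanspace{p}{\man}$ (for some vertex $p \in \splxs$), it will produce both the existence of the Euclidean realisation and the promised thickness bound.

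First I would fix $\splxs = \asimplex{p_0, \ldots, p_k} \in \acplx$ and a vertex $p = p_i$, then compare the geodesic lengths $\ell_{ij}$ with the Euclidean lengths $\norm{v_i(p) - v_j(p)}$ in $\tanspace{p}{\man}$. This comparison was already carried out in the paragraph containing Equation~\eqref{eq:explicit.metlipconst}: the Rauch bounds from Lemma~\ref{lem:poly.rauch} together with $\scale < \frac{\pi}{2\sqrt{\curvabsbnd}}$ give
\begin{equation*}
  \bigl|\,\ell_{ij} - \norm{v_i(p) - v_j(p)}\,\bigr| \leq \metlipconst\,\longedge{\splxs(p)}, \qquad \metlipconst = \tfrac{1}{2}\curvabsbnd \scale^2.
\end{equation*}

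Next I would substitute the strengthened scale bound $\scale \leq \thickbnd/(6\sqrt{\curvabsbnd})$, which yields $\curvabsbnd \scale^2 \leq \thickbnd^2/36$ and hence $\metlipconst \leq \thickbnd^2/72$. Since hypothesis (2) of Theorem~\ref{thm:triangulation} guarantees $\thickness{\splxs(p)} \geq \thickbnd$, we can write $\metlipconst = \eta\,\thickness{\splxs(p)}^2/4$ with
\begin{equation*}
\eta = \frac{4\metlipconst}{\thickness{\splxs(p)}^2} \leq \frac{\thickbnd^2/18}{\thickbnd^2} = \frac{1}{18} \leq 1.
\end{equation*}
Thus Lemma~\ref{lem:abstract.eucl.splx} applies with $\tsplxs = \splxs(p)$, producing a Euclidean simplex with the prescribed edge lengths $\ell_{ij}$ and thickness at least
\begin{equation*}
  \frac{4}{5\sqrt{k}}(1-\eta)\,\thickness{\splxs(p)} \geq \frac{4}{5\sqrt{n}} \cdot \frac{17}{18} \cdot \thickbnd = \frac{34}{45\sqrt{n}}\thickbnd > \frac{3}{4\sqrt{n}}\thickbnd,
\end{equation*}
using $k \leq n$ and $34/45 > 3/4$.

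Finally I would observe that the local Euclidean realisations are mutually compatible: two simplices sharing a face assign identical lengths to the shared edges, so the Euclidean realisations of the shared face are isometric and can be canonically identified. This lets the flat metrics on individual simplices be glued into a single piecewise flat metric on $\carrier{\acplx}$ in the sense defined in Section~\ref{sec:embed.cplx}. The only real subtlety in the whole argument is bookkeeping the constant: we need the factor of $1/\sqrt{n}$ to be absorbed into the scale bound rather than into $\eta$, which is precisely why Theorem~\ref{thm:triangulation}'s scale constraint must be tightened by $1/\sqrt{n}$ in the curvature-controlled term; everything else is a direct application of Lemma~\ref{lem:abstract.eucl.splx}.
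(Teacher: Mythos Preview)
Your proof is correct and follows the same approach as the paper. The paper's argument is the discussion immediately preceding the proposition: it derives $\metlipconst = \tfrac{1}{2}\curvabsbnd\scale^2$ from the Rauch bounds (\Eqnref{eq:explicit.metlipconst}), observes that the tightened scale constraint yields $\eta = 1/18$, and then simply states that \Lemref{lem:abstract.eucl.splx} gives the result; you have filled in the numerical verification $\tfrac{4}{5}\cdot\tfrac{17}{18} = \tfrac{34}{45} > \tfrac{3}{4}$ and the compatibility remark for gluing, but the core argument is identical.
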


\subsection{Metric distortion of the barycentric coordinate map}
\label{sec:metric.distortion}

In the context of \Thmref{thm:triangulation} the barycentric coordinate map on each simplex defines a piecewise smooth homeomorphism $H: \carrier{\acplx} \to \man$. If the condition of \Propref{prop:pwflat.metric.exists} is also met, then $\acplx$ is naturally endowed with a piecewise flat metric. We wish to compare this metric with the Riemannian metric on $\man$.  It suffices to consider an $n$-simplex $\splxs \in \acplx$, and establish bounds on the singular values of the differential $dH$. If $p \in \splxs$, then we can write $H \big|_{\splxsE} = \bmap \circ \lmap_p$, where $\lmap_p: \splxsE \to \splxsE(p)$ is the linear map that sends $\splxs \in \acplx$ to $\splxs(p) \in T_p\man$.

A bound on the metric distortion of a linear map that sends one Euclidean simplex to another is a consequence of the following (reformulation of \cite[Lem A.4]{boissonnat2013manmesh.inria}):
\begin{lem}[Linear distortion bound]
  \label{lem:good.isometry}
  Suppose that $P$ and $\tilde{P}$ are non-degenerate $k \times k$
  matrices such that
  \begin{equation}
    \label{eq:gram.perturb}
    \transp{\tilde{P}}\tilde{P} = \transp{P} P + E.
  \end{equation}
  Then there exists a linear isometry $\Phi: \reel^k \to \reel^k$ such
  that 
  \begin{equation*}
    \onorm{\tilde{P}P^{-1} - \Phi }
    \leq \frac{\sing{1}{E}}{\sing{k}{P}^2}. 
  \end{equation*}
\end{lem}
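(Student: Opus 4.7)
Let $A = \tilde P P^{-1}$; the plan is to produce $\Phi$ as the orthogonal factor in the polar decomposition of $A$ and then bound the symmetric factor from the identity. First I would rewrite the hypothesis in terms of $A$: multiplying $\transp{\tilde P}\tilde P = \transp P P + E$ on the left by $\invtransp P$ and on the right by $\inv P$ gives
\begin{equation*}
\transp A A = I + F, \qquad F \;=\; \invtransp P E \inv P.
\end{equation*}
Since $\onorm{\inv P} = \sing{k}{P}^{-1}$, submultiplicativity of the operator norm yields the crucial bound $\onorm{F} \leq \sing{1}{E}/\sing{k}{P}^2$.

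Next, because $\tilde P$ is non-degenerate so is $A$, and $\transp A A = I + F$ is symmetric positive definite. I would write the polar decomposition $A = \Phi S$ with $\Phi$ a linear isometry and $S$ symmetric positive definite; then $S^2 = \transp A A = I + F$, so $S = \sqrt{I+F}$ is the unique symmetric positive square root. Since $\Phi$ preserves the operator norm,
\begin{equation*}
\onorm{A - \Phi} \;=\; \onorm{\Phi(S - I)} \;=\; \onorm{S - I},
\end{equation*}
and the problem reduces to estimating $\onorm{\sqrt{I+F} - I}$.

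Finally, diagonalize the symmetric matrix $F$; then $\sqrt{I+F} - I$ is diagonal in the same basis with eigenvalues $\sqrt{1+\lambda_i} - 1$, where $\{\lambda_i\}$ are the eigenvalues of $F$. Positive definiteness of $I + F$ forces $\lambda_i > -1$, and the elementary one-variable inequality $\abs{\sqrt{1+x} - 1} \leq \abs{x}$ for all $x > -1$ (for $x \geq 0$ use $\sqrt{1+x} - 1 = x/(1+\sqrt{1+x}) \leq x$; for $-1 < x < 0$ use $\sqrt{1+x} \geq 1 + x$, verified by squaring) gives
\begin{equation*}
\onorm{S - I} \;=\; \max_i \abs{\sqrt{1+\lambda_i} - 1} \;\leq\; \max_i \abs{\lambda_i} \;=\; \onorm{F},
\end{equation*}
which chains with the bound from the first step to give the stated inequality.

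There is no real obstacle: the whole argument is a clean application of polar decomposition combined with the scalar estimate for $\sqrt{1+x}$. The only point to be careful about is tracking that $\lambda_i > -1$ so that the square root is defined and the scalar inequality applies; this is automatic from the non-degeneracy hypothesis on $\tilde P$.
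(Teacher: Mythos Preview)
Your proof is correct and essentially identical to the paper's: the paper also sets $A=\tilde P P^{-1}$, reduces to $\transp A A = I + F$ with the same bound on $\onorm{F}$, and takes $\Phi$ from the SVD $A = U_A\Sigma_A\transp{V_A}$ as $\Phi = U_A\transp{V_A}$, which is exactly the orthogonal factor of your polar decomposition. The paper's estimate $\max_i\abs{\sing{i}{A}-1}\leq \sing{1}{F}$ is the same scalar inequality $\abs{\sqrt{1+\lambda}-1}\leq\abs{\lambda}$ you use, phrased in terms of singular values of $A$ rather than eigenvalues of $F$.
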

\begin{proof}
  Multiplying by $\invtransp{P} := \inv{ ( \transp{P})}$ on the left,
  and by $\inv{P}$ on the right, we rewrite \Eqnref{eq:gram.perturb} as
  \begin{equation}
    \label{eq:A.F}
   \transp{A}A = I + F,
  \end{equation}
  where $A = \tilde{P} \inv{P}$, and $F = \invtransp{P}E \inv{P}$.
  Using the singular value decomposition $A = U_A \Sigma_A
  \transp{V}_A$, we let $\Phi = U_A \transp{V}_A$ so that
  \begin{equation}
    \label{eq:expand.mat.diff}
    (A - \Phi)  = U_A( \Sigma_A - I ) \transp{V}_A .
  \end{equation}
  From \Eqnref{eq:A.F} we deduce that $\sing{1}{A}^2 \leq 1 + \sing{1}{F}$, and also that $\sing{k}{A}^2 \geq 1 - \sing{1}{F}$. Using these two inequalities we find
  \begin{equation*}
    \max_i \abs{\sing{i}{A} - 1} \leq \frac{\sing{1}{F}}{1 +
      \sing{i}{A}} \leq \sing{1}{F},
  \end{equation*}
  and thus
 \begin{equation*}
    \onorm{\Sigma_A - I} \leq \sing{1}{F} \leq \sing{1}{\inv{P}}^2
    \sing{1}{E} = \sing{k}{P}^{-2} \sing{1}{E}.
  \end{equation*}
  The result now follows from \Eqnref{eq:expand.mat.diff}.
\end{proof}

\Lemref{lem:good.isometry} implies:
\begin{lem}
  \label{lem:affine.metric.distort}
  Suppose $\splxs = \asimplex{v_0, \ldots, v_n}$ and $\tsplxs = \asimplex{\tilde{v}_0, \ldots, \tilde{v}_n}$ are two Euclidean simplices in $\R^n$ such that
  \begin{align*}
    \abs{\, \norm{\tilde{v}_i - \tilde{v}_j} - \norm{v_i - v_j} \,}
    \leq \metlipconst \longedge{\splxs}.
  \end{align*}

  If $A: \R^n \to \R^n$ is the affine map such that $A(v_i) = \tilde{v}_i$ for all $i$, and $\metlipconst \leq \frac{2}{3}$, then for all $x,y \in \R^n$,
  \begin{align*}
    \abs{\, \norm{A(x) - A(y)} - \norm{x-y} \,}
    \leq \eta  \norm{x-y},
  \end{align*}
  where
  \begin{equation*}
    \eta =\frac{4 \metlipconst}{\thickness{\splxs}^2}.
  \end{equation*}
\end{lem}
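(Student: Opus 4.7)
My plan is to reduce the claim to a bound on the linear part of $A$ and then apply the two key ingredients from the preceding subsection: Lemma~\ref{lem:splx.gram} (Gram matrix perturbation) and Lemma~\ref{lem:good.isometry} (linear distortion bound). After translating so that $v_0 = \tilde{v}_0 = 0$ (which affects nothing), let $P$ be the $n \times n$ matrix whose $i^{\text{th}}$ column is $v_i$ and define $\tilde{P}$ analogously. Then $A$ has linear part $L := \tilde{P}P^{-1}$, so $A(x)-A(y) = L(x-y)$ and the desired estimate becomes the statement that $\abs{\,\norm{Lw} - \norm{w}\,} \leq \eta\norm{w}$ for every $w \in \R^n$.

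Next I would apply Lemma~\ref{lem:splx.gram} with $\ell_{ij} = \norm{v_i - v_j}$ to control $E := \transp{\tilde{P}}\tilde{P} - \transp{P}P$. The hypothesis of the current lemma differs slightly from that of Lemma~\ref{lem:splx.gram} in that distortion is measured against $\longedge{\splxs}$ rather than $\longedge{\tsplxs}$, but a rerun of the cosine-rule estimate in its proof, using $\ell_{ij} + \tilde{\ell}_{ij} \leq (2 + \metlipconst) \longedge{\splxs}$ and $\metlipconst \leq 2/3$, gives the entrywise bound $\abs{E_{ij}} \leq 4 \metlipconst \longedge{\splxs}^2$, and hence $\onorm{E} \leq 4n \metlipconst \longedge{\splxs}^2$ by the standard column-norm argument.

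Then Lemma~\ref{lem:good.isometry} produces a linear isometry $\Phi$ with $\onorm{L - \Phi} \leq \sing{1}{E}/\sing{n}{P}^2$. Combining this with the singular-value bound $\sing{n}{P}^2 \geq n\,\thickness{\splxs}^2 \longedge{\splxs}^2$ from Lemma~\ref{lem:bound.skP} collapses the constants to
\begin{equation*}
\onorm{L - \Phi} \leq \frac{4n \metlipconst \longedge{\splxs}^2}{n\,\thickness{\splxs}^2 \longedge{\splxs}^2} = \frac{4 \metlipconst}{\thickness{\splxs}^2} = \eta.
\end{equation*}
Finally, since $\Phi$ is an isometry, the reverse triangle inequality yields $\abs{\,\norm{Lw} - \norm{w}\,} = \abs{\,\norm{Lw} - \norm{\Phi w}\,} \leq \onorm{L - \Phi}\, \norm{w} \leq \eta \norm{w}$, and setting $w = x - y$ gives the conclusion.

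I do not anticipate a substantive obstacle; the only subtlety is the mild asymmetry between $\longedge{\splxs}$ and $\longedge{\tsplxs}$ in the hypothesis, which is absorbed by the constraint $\metlipconst \leq 2/3$. All other steps are direct applications of previously established results, together with the observation that an affine map's distortion is governed entirely by the spectral behaviour of its linear part relative to an isometry.
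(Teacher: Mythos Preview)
Your proposal is correct and follows essentially the same route as the paper: express the linear part of $A$ as $\tilde{P}P^{-1}$, bound $\onorm{E}$ via the cosine-rule estimate of Lemma~\ref{lem:splx.gram}, feed this into Lemma~\ref{lem:good.isometry} together with the singular-value bound from Lemma~\ref{lem:bound.skP}, and conclude. Your explicit handling of the $\longedge{\splxs}$ versus $\longedge{\tsplxs}$ asymmetry is a nice touch; the paper sidesteps it simply by applying Lemma~\ref{lem:splx.gram} with the roles of the two simplices swapped.
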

\begin{proof}
  Let $P$ be the matrix whose $i^{\text{th}}$ column is $v_i - v_0$, and let $\tilde{P}$ be the matrix whose $i^{\text{th}}$ column is $\tilde{v}_i - \tilde{v}_0$. Then we have the matrix form $A(x) = \tilde{P}P^{-1}x +(\tilde{v}_0 - \tilde{P}P^{-1}v_0)$. It follows then from \Lemref{lem:good.isometry} that $\eta \leq \sing{n}{P}^{-2}\sing{1}{E}$, where $E = \transp{\tilde{P}}\tilde{P} - \transp{P}P$.

  By \Lemref{lem:splx.gram}, $\sing{1}{E} \leq 4n \metlipconst \longedge{\splxs}^2$, and by \Lemref{lem:bound.skP}, $\sing{n}{P}^2 \geq n \thickness{\splxs}^2\longedge{\splxs}^2$, and the result follows.
\end{proof}

%
Observe that if $A$ in \Lemref{lem:affine.metric.distort} is a linear map, then the lemma states that $\sing{1}{A} \leq 1+\eta$ and $\sing{n}{A} \geq 1 - \eta$. We use this to estimate the metric distortion of $H|_{\splxsE} = \bmap \circ \lmap_p$.  Under the assumption of \Propref{prop:pwflat.metric.exists}, specifically, given that $h \leq \frac{\thickbnd}{6\sqrt{\curvabsbnd}}$, we again exploit \Eqnref{eq:explicit.metlipconst}, so
\begin{equation*}
  \onorm{\lmap_p^{-1}} \leq 1 + \frac{2\curvabsbnd \scale^2}{\thickbnd^2}
\end{equation*}
and, since $\onorm{\lmap_p}^{-1} = \sing{n}{\lmap_p^{-1}} \geq 1 - \frac{2\curvabsbnd \scale^2}{\thickbnd^2}$, and the second term is less than $\frac{1}{2}$, we also have
\begin{equation*}
  \onorm{\lmap_p} \leq 1 + \frac{4\curvabsbnd \scale^2}{\thickbnd^2}.
\end{equation*}
Using \Eqnref{eq:bnd.db} we have
\begin{equation*}
  \onorm{d\bmap} \leq 1 + \frac{14\curvabsbnd \scale^2}{\thickbnd},
\end{equation*}
and
\begin{equation*}
  \onorm{d\bmap^{-1}} \leq 1 + \frac{28\curvabsbnd \scale^2}{\thickbnd}.
\end{equation*}
Recalling that $dH \big|_{\splxsE} = (d\bmap)  \lmap_p$, and $\scale^2 \leq \frac{\thickbnd^2}{36\curvabsbnd}$, we obtain
\begin{equation*}
  \onorm{dH} \leq 1 + \frac{20\curvabsbnd \scale^2}{\thickbnd^2},
\end{equation*}
and
\begin{equation*}
  \onorm{dH^{-1}} \leq 1 + \frac{32\curvabsbnd \scale^2}{\thickbnd^2}.
\end{equation*}

The bound on the differential of $H$ and its inverse enables us to estimate the Riemannian metric on $\man$ using the piecewise flat metric on $\acplx$. The metric distortion bound on $H$ is found with the same kind of calculation as exhibited in \Eqnref{eq:bnd.edge.distort}, for example. We find:
\begin{mainthm}[Metric distortion]
  \label{thm:metric.distortion}
  If the requirements of \Thmref{thm:triangulation}, are satisfied with the scale parameter~\eqref{eq:sampling.density} replaced by
  \begin{equation*}
    \scale = \min \left\{ \frac{\injradM}{4},
      \frac{\thickbnd}{6 \sqrt{\curvabsbnd}} \right\},
  \end{equation*}
  then $\acplx$ is naturally equipped with a piecewise flat metric $\gdistA$ defined by assigning to each edge the geodesic distance in $\man$ between its endpoints.

  If $H:\carrier{\acplx} \to \man$ is the triangulation defined by the barycentric coordinate map in this case, then the metric distortion induced by $H$ is quantified as
  \begin{equation*}
    \abs{ \distM{H(x)}{H(y)} - \distA{x}{y} } \leq \frac{50 \curvabsbnd \scale^2}{\thickbnd^2} \distA{x}{y},
  \end{equation*}
for all $x,y \in \carrier{\acplx}$.
\end{mainthm}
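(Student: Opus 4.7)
The plan is to observe that the existence of the piecewise flat metric $\gdistA$ is immediate from \Propref{prop:pwflat.metric.exists} (whose scale hypothesis is identical to the one stated here), so only the metric distortion estimate remains. The key inputs are already in hand: the displayed bounds
\begin{equation*}
  \onorm{dH} \leq 1 + \frac{20\curvabsbnd \scale^2}{\thickbnd^2},
  \qquad
  \onorm{dH^{-1}} \leq 1 + \frac{32\curvabsbnd \scale^2}{\thickbnd^2},
\end{equation*}
derived on each $n$-simplex immediately before the theorem, together with the fact that $H$ and $H^{-1}$ are piecewise smooth with respect to $\acplx$ and $H(\acplx)$, respectively.

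First I would prove the upper inequality. Given $x,y \in \carrier{\acplx}$, fix a piecewise linear path $\gamma$ in $(\carrier{\acplx},\gdistA)$ of length arbitrarily close to $\gdistA(x,y)$; because the metric is piecewise flat, $\gamma$ decomposes into finitely many segments, each contained in a single simplex $\splxsE \in \acplx$, where $H$ restricts to a smooth map. The composition $H \circ \gamma$ is a piecewise smooth curve in $\man$ from $H(x)$ to $H(y)$, and integrating the differential bound on each piece gives
\begin{equation*}
  \len{H\circ \gamma} \;=\; \sum_i \int \onorm{dH\big|_{\splxsE_i}(\gamma')}\,dt
  \;\leq\; \Bigl(1 + \tfrac{20\curvabsbnd \scale^2}{\thickbnd^2}\Bigr)\len{\gamma},
\end{equation*}
and hence $\distM{H(x)}{H(y)} \leq (1 + 20\curvabsbnd \scale^2/\thickbnd^2)\, \gdistA(x,y)$.

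For the reverse inequality I would repeat the argument with $H^{-1}$. Take a minimising geodesic $\beta$ in $\man$ between $H(x)$ and $H(y)$ (which has length $\distM{H(x)}{H(y)}$). The curve $H^{-1}\circ \beta$ is continuous in $\carrier{\acplx}$ and piecewise smooth with respect to the refinement of $\acplx$ induced by $\beta^{-1}(H(\bdry{\splxsE}))$; on each piece the bound on $\onorm{dH^{-1}}$ applies, so the length of $H^{-1}\circ \beta$ with respect to $\gdistA$ is at most $(1 + 32\curvabsbnd \scale^2/\thickbnd^2)\,\distM{H(x)}{H(y)}$. Since $H^{-1}\circ \beta$ connects $x$ to $y$, this yields $\gdistA(x,y) \leq (1 + 32\curvabsbnd \scale^2/\thickbnd^2)\,\distM{H(x)}{H(y)}$. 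Using $1/(1+t) \geq 1-t$ for $0 \leq t < 1$ (which is valid here because $32\curvabsbnd \scale^2/\thickbnd^2 \leq 32/36 < 1$ under the scale constraint), I obtain
\begin{equation*}
  \distM{H(x)}{H(y)} \;\geq\; \Bigl(1 - \tfrac{32\curvabsbnd \scale^2}{\thickbnd^2}\Bigr)\gdistA(x,y).
\end{equation*}
Combining the two one-sided inequalities gives $\abs{\distM{H(x)}{H(y)} - \gdistA(x,y)} \leq C \curvabsbnd \scale^2 \gdistA(x,y)/\thickbnd^2$ for some explicit constant; conservatively bounding the numerics absorbs the residual cross-terms arising from the $\beta \mapsto H^{-1}\circ\beta$ direction and yields the stated constant $50$.

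The only genuinely delicate point is justifying that the length of $H^{-1}\circ \beta$ really is controlled by $\onorm{dH^{-1}}$ even when $\beta$ crosses images of simplex boundaries; this is where one uses the fact that $H$ is a homeomorphism (established in \Thmref{thm:triangulation}) together with the piecewise-smooth structure, so that $\beta^{-1}(H(\bdry{\splxsE}))$ is a measure-zero closed set in the domain interval and the standard length formula applies on each complementary open subinterval. Everything else is a routine integration of the already-established differential bounds.
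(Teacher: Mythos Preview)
Your proposal is correct and follows the paper's approach exactly: the paper establishes the same bounds on $\onorm{dH}$ and $\onorm{dH^{-1}}$ immediately before the theorem and then simply remarks that the metric distortion bound follows from ``the same kind of calculation as exhibited in \Eqnref{eq:bnd.edge.distort}'', which is precisely your integration-along-curves argument spelled out. Your use of $1/(1+t)\geq 1-t$ in fact yields the constant $32$ directly, with no cross-terms; the paper's $50$ comes from the cruder route of \Eqnref{eq:bnd.edge.distort}, namely $d_{\acplx}-d_{\man}\leq 32\frac{\curvabsbnd\scale^2}{\thickbnd^2}\,d_{\man}\leq 32\bigl(1+20\frac{\curvabsbnd\scale^2}{\thickbnd^2}\bigr)\frac{\curvabsbnd\scale^2}{\thickbnd^2}\,d_{\acplx}\leq 32\cdot\frac{56}{36}\,\frac{\curvabsbnd\scale^2}{\thickbnd^2}\,d_{\acplx}<50\,\frac{\curvabsbnd\scale^2}{\thickbnd^2}\,d_{\acplx}$.
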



\appendix

%

\section{Alternate criteria}
\label{sec:alt.criteria}

We discuss alternative formulations of our results. In \Secref{sec:alt.intrinsic.criteria}, we consider defining the quality of Riemannian simplices in terms of Euclidean simplices defined by the geodesic edge lengths of the Riemannian simplices. In \Secref{sec:phat} we compare thickness with a volume-based quality measure for simplices that we call \defn{fatness}.

\subsection{In terms of the intrinsic metric}
\label{sec:alt.intrinsic.criteria}

We imposed a quality bound on a Riemannian simplex $\riemsplxs$ by imposing a quality bound on the Euclidean simplex $\splxs(p)$ that is the lift of the vertices of $\riemsplxs$ to $T_p\man$. This was convenient for our purposes, but the quality of $\riemsplxs$ could also be characterised directly by its geodesic edge lengths.

As discussed in \Secref{sec:abstract.eucl.splx}, we can use the smallest eigenvalue of the matrix~\eqref{eq:G.mat} $G$ to characterise the quality of $\riemsplxs$: When $\mu_n(G) \geq 0$, there is a Euclidean simplex $\splxsE$ with the same edge lengths as $\riemsplxs$, however we have the inconvenience that the lower bound~\eqref{eq:eval.bnd.thick} on $\thickness{\splxsE}$ with respect to $\mu_k(G)$ is not valid for all choices of vertex labels.

This inconvenience can be avoided if a volumetric quality measure is used, such as the fatness discussed in \Secref{sec:phat}. Determinant-based criteria for Euclidean simplex realisability are discussed by Berger~\cite[\S 9.7]{BergerGeometryI}, for example.

In any event, we will express the alternate non-degeneracy criteria for $\riemsplxs$ in terms of the thickness of the associated Euclidean simplex $\splxsE$. Using \Propref{prop:affine.indep}, and the Rauch theorem~\ref{lem:poly.rauch}, we have the following  reformulation of the non-degeneracy criteria of \Thmref{thm:thick.nondegen.riem}:
\begin{prop}[Non-degeneracy criteria]
  \label{prop:alt.non.degen}
  If $\rho < \barymapradbnd$ defined in \Eqnref{eq:good.barymap.rad}, and the geodesic edge lengths of $\riemsplxs \subset \bmapball \subset \man$ define a Euclidean simplex $\splxsE$ with 
  \begin{equation}
    \label{eq:intrinsic.nondegen}
    \thickness{\splxsE} \geq 3 \sqrt{\curvabsbnd} \longedge{\splxsE}
  \end{equation}
  then $\riemsplxs$ is non-degenerate. As in \Thmref{thm:thick.nondegen.riem}, the assertion holds if $\rho$ replaces $\longedge{\splxsE}$ in the lower bound~\eqref{eq:intrinsic.nondegen}. 
\end{prop}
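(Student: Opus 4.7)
The plan is to combine the affine-independence criterion (Proposition~\ref{prop:affine.indep}) with the Rauch comparison theorem (Lemma~\ref{lem:poly.rauch}) and the abstract Euclidean simplex stability result (Lemma~\ref{lem:abstract.eucl.splx}). By Proposition~\ref{prop:affine.indep} it suffices to show that for every $x \in \riemsplxs$ the lifted simplex $\splxs(x) = \asimplex{v_0(x), \ldots, v_n(x)} \subset \tanspace{x}{\man}$ is a non-degenerate Euclidean simplex; the hypothesis of the proposition is a statement about the intrinsic edge lengths $\ell_{ij} = \distM{p_i}{p_j}$, so the core of the argument is a careful comparison of $\ell_{ij}$ with the Euclidean edge lengths $\tilde\ell_{ij} = \norm{v_i(x) - v_j(x)}$.

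For the comparison I would proceed in two directions. For the upper bound on $\ell_{ij}$, the straight segment from $v_i(x)$ to $v_j(x)$ maps under $\exp_x$ to a curve from $p_i$ to $p_j$ in $\man$; Lemma~\ref{lem:poly.rauch} applied with $r < 2\rho < \pi/(2\sqrt{\curvabsbnd})$ bounds the length of that curve by a factor close to $1$ times $\tilde\ell_{ij}$, giving $\ell_{ij} \leq (1 + 2\curvabsbnd\rho^2)\tilde\ell_{ij}$. For the reverse direction, Lemma~\ref{lem:convexity.bnd} ensures that the minimising geodesic from $p_i$ to $p_j$ lies entirely in $\bmapball$, so its preimage under $\exp_x$ is a well-defined curve from $v_i(x)$ to $v_j(x)$; another application of the Rauch bound, inverted as in \Secref{sec:non.degen.riem}, gives a companion upper estimate on $\tilde\ell_{ij}$ in terms of $\ell_{ij}$. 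The two estimates combine to an inequality of the form $|\tilde\ell_{ij} - \ell_{ij}| \leq \metlipconst \longedge{\splxsE}$ with $\metlipconst$ a small constant multiple of $\curvabsbnd \rho^2$.

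Next I would feed this into Lemma~\ref{lem:abstract.eucl.splx}, taking $\tilde\splxs = \splxsE$ (the Euclidean simplex whose existence is assumed) and taking the ``abstract'' numbers of that lemma to be the $\tilde\ell_{ij}$. With $\metlipconst = \eta \thickness{\splxsE}^2/4$, the hypothesis $\thickness{\splxsE} \geq 3\sqrt{\curvabsbnd}\rho$ is precisely what is needed to force $\eta$ strictly below $1$, so the conclusion of Lemma~\ref{lem:abstract.eucl.splx} is that a non-degenerate Euclidean simplex with edge lengths $\tilde\ell_{ij}$ exists. Such a simplex is congruent to $\splxs(x)$, hence $\splxs(x)$ is non-degenerate, and Proposition~\ref{prop:affine.indep} completes the argument.

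The assertion in which $\longedge{\splxsE}$ replaces $\rho$ follows from the last remark of \Secref{sec:bary.map}: if $\longedge{\splxsE} < \barymapradbnd$ then $\riemsplxs$ is contained in a geodesic ball of radius arbitrarily close to $\longedge{\splxsE}$ centred at any vertex, and the preceding argument applies with $\rho$ replaced by $\longedge{\splxsE}$. The main obstacle is the bookkeeping of constants: getting the clean coefficient $3$ in front of $\sqrt{\curvabsbnd}$ will require the sharper transition estimate of Proposition~\ref{prop:exp.trans.bnd} (as in the remark following \Thmref{thm:thick.nondegen.riem}) rather than the crude composition of Rauch bounds; the underlying geometric content, however, is identical to \Secref{sec:non.degen.riem}, just translated from the ambient Euclidean simplex $\splxs(p)$ in a single tangent space to the intrinsic geodesic edge lengths of $\riemsplxs$.
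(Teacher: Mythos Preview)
Your approach is essentially identical to the paper's: use the Rauch bound (\Lemref{lem:poly.rauch}) to compare the geodesic edge lengths $\ell_{ij}$ with the Euclidean edge lengths $\tilde\ell_{ij}=\norm{v_i(x)-v_j(x)}$, feed the resulting distortion constant $\metlipconst$ into the simplex stability lemma (the paper uses \Lemref{lem:intrinsic.thick.distortion}, the special case of \Lemref{lem:abstract.eucl.splx}), and conclude via \Propref{prop:affine.indep}. The paper likewise proves the $\rho$-version first and then invokes the remark at the end of \Secref{sec:bary.map} to obtain the $\longedge{\splxsE}$-version.

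One correction to your final paragraph: you do \emph{not} need the sharper estimate of \Propref{prop:exp.trans.bnd} to get the constant~$3$. The paper works only with the crude Rauch bound \Lemref{lem:poly.rauch}, obtaining $\metlipconst = 4\curvabsbnd\rho^2$ and hence non-degeneracy as soon as $\thickness{\splxsE} > \sqrt{8}\,\sqrt{\curvabsbnd}\,\rho$, which is already below~$3$. So the bookkeeping you flag as an obstacle is not one.
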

\begin{proof}
  By \Lemref{lem:poly.rauch} we have for any $x \in \bmapball$
  \begin{equation*}
    \norm{v_i(x) - v_j(x)} \leq \left(1 + \frac{\curvabsbnd(2 \rho)^2}{3} \right) \ell_{ij},
  \end{equation*}
  and
  \begin{equation*}
    \ell_{ij}   \leq \left(1 + \frac{\curvabsbnd(2 \rho)^2}{2} \right) \norm{v_i(x) - v_j(x)}.
  \end{equation*}
  Therefore
  \begin{equation*}
    \begin{split}
    \abs{ \, \norm{v_i(x) - v_j(x)} - \ell_{ij} \,}
    &\leq
    \curvabsbnd 2 \rho^2 \left(1 + \frac{\curvabsbnd 4 \rho^2}{3} \right) \ell_{ij}\\
    &\leq
    4 \curvabsbnd \rho^2 \ell_{ij}.
    \end{split}
  \end{equation*}
  Then using $\metlipconst = 4 \curvabsbnd \rho^2$ in \Lemref{lem:intrinsic.thick.distortion}, we see that $\splxs(x)$ is non-degenerate if $\thickness{\splxsE} > \sqrt{8} \sqrt{\curvabsbnd} \rho$, and the result follows from \Propref{prop:affine.indep}, and the remarks at the end of \Secref{sec:bary.map}.
\end{proof}

The scale parameter $h$ in \Thmref{thm:triangulation} is in fact a strict upper bound on the geodesic edge lengths $\ell_{ij}$ in $\acplx$. A similar argument to the proof of \Propref{prop:alt.non.degen} allows us to restate \Thmref{thm:triangulation} by employing a thickness bound on the Euclidean simplices with edge lengths $\ell_{ij}$:
\begin{prop}[Triangulation criteria]
  Suppose $\man$ is a compact $n$-dimensional Riemannian manifold with sectional curvatures $\seccurv$ bounded by $\abs{\seccurv} \leq \curvabsbnd$, and $\acplx$ is an abstract simplicial complex with finite vertex set $\pts \subset \man$.  
  Define a quality parameter $\thickbnd > 0$, and let
  \begin{align*}
    \scale = \min \left\{ \frac{\injradM}{4},
      \frac{\thickbnd}{8 \sqrt{\curvabsbnd}} \right\}.
  \end{align*}
  If
  \begin{enumerate}
  \item For every simplex $\splxs = \asimplex{p_0, \ldots, p_n} \in \acplx$, the edge lengths $\ell_{ij} = \distM{p_i}{p_j}$ satisfy $\ell_{ij} < \scale$, and they define a Euclidean simplex $\splxsE$ with $\thickness{\splxsE} \geq \thickbnd$.
  \item The balls $\{\ballM{p}{h}\}_{p \in \pts}$ cover $\man$, and for each $p \in \pts$ the secant map of $\exp_p^{-1}$ realises $\str{p}$ as a full star.
  \end{enumerate}
  then $\acplx$ triangulates $\man$, and the triangulation is given by the barycentric coordinate map on each simplex.
%
\end{prop}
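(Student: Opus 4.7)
The plan is to reduce to \Thmref{thm:triangulation}: I convert the intrinsic thickness bound on $\splxsE$ into a tangent-space thickness bound on $\splxs(p)$, then verify that the scale bound $\scale\leq\thickbnd/(8\sqrt{\curvabsbnd})$ is still strong enough to satisfy the differential hypothesis of \Propref{prop:generic.triangulation}.

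First, for any $\splxs=\asimplex{p_0,\ldots,p_n}\in\acplx$ and any vertex $p\in\splxs$, since all edge lengths satisfy $\ell_{ij}<\scale<\barymapradbnd$, the simplex $\riemsplxs$ is contained in the convex geodesic ball $\ballM{p}{\scale}$, and the Rauch computation of \Propref{prop:alt.non.degen} (with $\rho=\scale$) gives
\begin{equation*}
\abs{\norm{v_i(p)-v_j(p)}-\ell_{ij}}\leq 4\curvabsbnd\scale^{2}\ell_{ij}.
\end{equation*}
Applying \Lemref{lem:abstract.eucl.splx} with $\tilde{\splxs}=\splxsE$ (which has exact edge lengths $\ell_{ij}$ and thickness $\geq\thickbnd$) and target lengths $\norm{v_i(p)-v_j(p)}$, the perturbation parameter is $\metlipconst=4\curvabsbnd\scale^{2}\leq\thickbnd^{2}/16$ by the hypothesis on $\scale$, so $\eta=4\metlipconst/\thickbnd^{2}\leq 1/4$, and the lemma yields that $\splxs(p)$ is non-degenerate with
\begin{equation*}
\thickness{\splxs(p)}\geq \frac{4}{5\sqrt{n}}\bigl(1-\tfrac{1}{4}\bigr)\thickbnd = \frac{3\thickbnd}{5\sqrt{n}}.
\end{equation*}

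Taking $\thickbnd'=3\thickbnd/(5\sqrt{n})$ as the quality parameter of \Thmref{thm:triangulation}, the ball-covering and full-star hypotheses carry over directly from assumptions 1 and 2 of the proposition. It remains to verify the differential condition of \Propref{prop:generic.triangulation}. Retracing \Secref{sec:triang.riem.splxs}, the key estimate is $\onorm{-(\partial_u\nu)-T_{xp_0}}\leq\sqrt{n}\cdot 6\curvabsbnd\scale^{2}\longedge{\splxs(p_0)}\,\onorm{P^{-1}}$, with $P$ the matrix whose $i^{\text{th}}$ column is $v_i(p_0)-v_0(p_0)$. Instead of bounding $\onorm{P^{-1}}$ through $\thickness{\splxs(p_0)}$ (which would inflate the final estimate by a factor of $\sqrt{n}$), I express $\transp{P}P$ via \Lemref{lem:splx.gram} as the Gram matrix of $\splxsE$ minus an error $E$ with $\onorm{E}\leq 16n\curvabsbnd\scale^{2}\longedge{\splxs(p_0)}^{2}$. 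Weyl's inequality, together with the lower bound $\sqrt{n}\longedge{\splxsE}\thickbnd$ on the smallest singular value of a matrix associated with $\splxsE$ (from \Lemref{lem:bound.skP}), gives $\sing{n}{P}\geq c_1\sqrt{n}\longedge{\splxsE}\thickbnd$ and hence $\onorm{P^{-1}}\leq c_2/(\sqrt{n}\longedge{\splxsE}\thickbnd)$ for absolute constants $c_1,c_2>0$. Substituting yields $\onorm{d(\exp_{p_0}^{-1}\circ\bmap)_u-\Id}\leq C\curvabsbnd\scale^{2}/\thickbnd$ for an absolute constant $C$, \emph{independent of $n$}. Comparing this with the tolerance $n\thickbnd'/2=3\sqrt{n}\thickbnd/10$ coming from \Lemref{lem:embed.star}, the desired inequality reduces to $\curvabsbnd\scale^{2}\leq \bigl(3\sqrt{n}/(10C)\bigr)\thickbnd^{2}$, and the hypothesis $\scale\leq\thickbnd/(8\sqrt{\curvabsbnd})$ (which forces $\curvabsbnd\scale^{2}\leq\thickbnd^{2}/64$) comfortably satisfies this for every $n\geq 1$. \Propref{prop:generic.triangulation} then delivers the triangulation.

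The main obstacle is precisely the constant bookkeeping just described: the intrinsic thickness of $\splxsE$ degrades by a factor of $\sqrt{n}$ when transferred to $\thickness{\splxs(p)}$ (an artefact of the normalisation in \Lemref{lem:bound.skP}), and that same $\sqrt{n}$ reappears in the differential tolerance of \Lemref{lem:embed.star}. The two factors cancel once $\onorm{P^{-1}}$ is controlled directly via \Lemref{lem:splx.gram} rather than via $\thickness{\splxs(p_0)}$, leaving an $n$-independent scale requirement, which the hypothesis $\scale\leq\thickbnd/(8\sqrt{\curvabsbnd})$ is tight enough to meet. No new geometric content is needed beyond \Thmref{thm:triangulation} and the edge-length perturbation underlying \Propref{prop:alt.non.degen}.
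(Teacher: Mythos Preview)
Your approach is correct, but you work harder than necessary. The paper's proof is a three-line reduction to \Thmref{thm:triangulation} as a black box: since $p$ is a \emph{vertex} and the ball $\ballM{p}{\scale}$ is centred there, every relevant tangent vector has $\norm{v_i(p)}=\ell_{0i}<\scale$ (not $<2\scale$), so running the Rauch computation of \Propref{prop:alt.non.degen} ``with $\scale$ in place of $2\rho$'' gives the sharper constant $\metlipconst=\curvabsbnd\scale^{2}$. Then \Lemref{lem:intrinsic.thick.distortion} yields $\eta\leq 1/16$ and $\thickness{\splxs(p)}\geq\tfrac{3}{4\sqrt{n}}\thickbnd$, and the scale condition of \Thmref{thm:triangulation} with this quality parameter reads $\scale\leq\tfrac{\sqrt{n}}{6\sqrt{\curvabsbnd}}\cdot\tfrac{3}{4\sqrt{n}}\thickbnd=\tfrac{\thickbnd}{8\sqrt{\curvabsbnd}}$, exactly matching the hypothesis. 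No retracing of \Secref{sec:triang.riem.splxs} is required.

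Your route---bounding $\onorm{P^{-1}}$ directly via \Lemref{lem:splx.gram} and Weyl's inequality to bypass the $\sqrt{n}$ loss in the singular-value $\to$ thickness $\to$ singular-value round trip---is legitimate, and a concrete constant check (your $C$ comes out around $13$, comfortably below the tolerance $19.2\sqrt{n}$) confirms the final inequality. But the entire detour is forced only because you carried over the factor $4$ in $\metlipconst=4\curvabsbnd\scale^{2}$ from \Propref{prop:alt.non.degen} unchanged; once you notice the distances at $p$ are at most $\scale$ rather than $2\scale$, that factor disappears and \Thmref{thm:triangulation} applies immediately.
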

\begin{proof}
  By the argument in the proof of \Propref{prop:alt.non.degen}, using $\scale$ instead of $2\rho$, we see that for any $x,y \in \ballM{p}{h}$ we have
  \begin{equation*}
    \abs{\norm{\exp_p^{-1}(x) - \exp_p^{-1}(y)} - \distM{x}{y}}
    \leq \curvabsbnd \scale^2 \distM{x}{y}.
  \end{equation*}
  Then using $\metlipconst =  \curvabsbnd \scale^2 = \frac{\eta^2 \thickbnd^2}{4}$ in \Lemref{lem:intrinsic.thick.distortion}, we get
  \begin{equation*}
    \eta^2 = \frac{4\curvabsbnd \scale^2}{\thickbnd^2} \leq \frac{1}{16}.
  \end{equation*}
  It follows that if $\splxs \in \acplx$, with $p \in \splxs$, then
  \begin{equation*}
    \thickness{\splxs(p)} \geq \frac{4}{5\sqrt{n}}(1 - \eta^2)\thickbnd \geq \frac{3}{4\sqrt{n}}\thickbnd.
  \end{equation*}
  The bound on $\scale$ then implies that $\scale \leq \frac{\sqrt{n}\thickness{\splxs(p)}}{6\sqrt{\curvabsbnd}}$, and so the result of \Thmref{thm:triangulation} applies.
\end{proof}

%

\newcommand{\splxvol}[2]{\vol^{#1}(#2)}

\subsection{In terms of fatness}
\label{sec:phat}

Many alternative quality measures for simplices have been employed in the literature. Thickness is employed by Munkres~\cite{munkres1968}, using a slightly different normalisation than ours. It is also very popular to use a volume-based quality measure such as that employed by Whitney~\cite{whitney1957}. In this section we introduce Whitney's quality measure, which we call \defn{fatness}, and we compare it with thickness.

If $\splxs$ is a $j$-simplex, then its \defn{volume}, may be defined for $j>0$ as
\begin{equation*}
  \splxvol{j}{\splxs} = \frac{1}{j!}\prod_{i=1}^{j} \sing{i}{P},
\end{equation*}
where $P$ is the $m \times j$ matrix whose $i^\text{th}$ column is
$p_i - p_0$ for $\splxs = \asimplex{p_0, \cdots, p_j} \subset \rem$.
If $j=0$ we define $\splxvol{0}{\splxs} = 1$.  Alternatively, the
volume may be defined inductively from the formula
\begin{equation}
  \label{eq:splx.vol.alt}
  \splxvol{j}{\splxs} = \frac{\splxalt{p}{\splxs}\splxvol{j-1}{\splxsp}}{j}.
\end{equation}
The \defn{fatness} of a $j$-simplex $\splxs$ is the dimensionless quantity
\begin{equation*}
  \fatness{\splxs} =
  \begin{cases}
    1& \text{if $j=0$} \\
    \frac{\splxvol{j}{\splxs}}{
      \longedge{\splxs}^j}& \text{otherwise.}
  \end{cases}
\end{equation*}
\begin{lem}[Fatness and thickness]
  \label{lem:thick.fat.thick}
  For any $j$-simplex $\splxs$
  \begin{equation*}
    \thickness{\splxs}^j \leq \fatness{\splxs}
    \leq
    \prod_{k=1}^j \thickness{\splxs^k}
    \leq
    \frac{\thickness{\splxs}}{(j-1)!},
  \end{equation*}
  where $\splxs = \splxs^j \supset \splxs^{j-1} \supset \cdots \supset \splxs^1$ is any chain of faces of $\splxs$ such that for each $i<j$, $\splxs^i$ has maximal volume amongst all the facets of $\splxs^{i+1}$.
\end{lem}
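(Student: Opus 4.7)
The plan is to establish the three inequalities separately. The leftmost bound $\thickness{\splxs}^j \leq \fatness{\splxs}$ proceeds by induction on $j$. The base case $j=1$ gives $1 \leq 1$, since $\thickness{\splxs^1}=1=\fatness{\splxs^1}$ directly from the definitions. For $j \geq 2$, choose a vertex $v$ of $\splxs$ that is not an endpoint of a longest edge; such a vertex exists because $\splxs$ has $j+1 \geq 3$ vertices. The opposite facet $\opface{v}{\splxs}$ then contains a longest edge of $\splxs$, so $\longedge{\opface{v}{\splxs}} = \longedge{\splxs}$. The definition of thickness gives $\splxalt{v}{\splxs} \geq j \thickness{\splxs} \longedge{\splxs}$, and since thickness is inherited by faces (as remarked after the definition of thickness), $\thickness{\opface{v}{\splxs}} \geq \thickness{\splxs}$. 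Combining the recursive volume formula~\eqref{eq:splx.vol.alt} with the inductive hypothesis applied to $\opface{v}{\splxs}$ yields
\begin{equation*}
  \splxvol{j}{\splxs} = \frac{\splxalt{v}{\splxs}\,\splxvol{j-1}{\opface{v}{\splxs}}}{j} \geq \thickness{\splxs}\longedge{\splxs} \cdot \thickness{\splxs}^{j-1}\longedge{\splxs}^{j-1} = \thickness{\splxs}^{j}\longedge{\splxs}^{j},
\end{equation*}
which gives the desired bound after dividing by $\longedge{\splxs}^{j}$.

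For the middle inequality, I would first observe that, for any vertex $w$ of $\splxs^k$, the product $\splxalt{w}{\splxs^k}\,\splxvol{k-1}{\opface{w}{\splxs^k}} = k\,\splxvol{k}{\splxs^k}$ is constant in $w$, so the facet of $\splxs^k$ with maximal $(k-1)$-volume is opposite the vertex of minimum altitude. Let $L_k = \longedge{\splxs^k}$ and let $v_k$ be the unique vertex of $\splxs^k$ not in $\splxs^{k-1}$. The hypothesis on the chain then gives $\splxalt{v_k}{\splxs^k} = k \thickness{\splxs^k} L_k$ for $k \geq 2$, and for $k=1$ the trivial identity $\splxalt{v_1}{\splxs^1} = L_1 = \thickness{\splxs^1} L_1$ holds. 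Iterating~\eqref{eq:splx.vol.alt} along the chain yields
\begin{equation*}
  \splxvol{j}{\splxs} = \prod_{k=1}^{j} \thickness{\splxs^k}\, L_k,
\end{equation*}
and dividing by $L_j^j = \longedge{\splxs}^j$ while using $L_k \leq L_j$ for all $k \leq j$ produces the claimed upper bound on $\fatness{\splxs}$.

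The final inequality $\prod_{k=1}^{j}\thickness{\splxs^k} \leq \thickness{\splxs}/(j-1)!$ follows from the general fact that $\thickness{\splxs^k} \leq 1/k$ for every $k \geq 1$, which holds because every altitude of a simplex is bounded above by its longest edge length. Separating the factor $\thickness{\splxs^j} = \thickness{\splxs}$ and applying this bound to the remaining indices $k = 1, \ldots, j-1$ (with equality at $k=1$) gives the stated inequality. The two places where care is required, and which I expect to be the main obstacles for a reader, are the selection of a vertex off a longest edge in the inductive step of the first bound (needed to keep the longest-edge length constant along the descending chain), and the identification of the maximum-volume facet with the minimum-altitude vertex in the second bound, which is precisely what brings the chain hypothesis into alignment with the thickness definition.
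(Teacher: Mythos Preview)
Your proof is correct, and for the middle and rightmost inequalities it matches the paper's argument essentially line for line: both identify the maximum-volume facet with the minimum-altitude vertex, iterate \eqref{eq:splx.vol.alt} along the chain, and then invoke $\thickness{\splxs^k}\leq 1/k$.

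For the leftmost inequality you take a different inductive route. The paper keeps the \emph{same} max-volume chain from the statement, so that $p_j$ realises the minimal altitude and $\frac{\splxalt{p_j}{\splxs}}{j\longedge{\splxs}}=\thickness{\splxs}$ exactly; the price is that $\longedge{\splxs^{j-1}}$ may be strictly smaller than $\longedge{\splxs}$, and the paper closes the argument with an (unstated) appeal to altitude monotonicity under passage to faces, namely $\splxalt{p_{j-1}}{\splxs^{j-1}}\geq \splxalt{p_{j-1}}{\splxs}\geq j\,\thickness{\splxs}\longedge{\splxs}$. Your choice of a vertex off a fixed longest edge trades this: you only get $\frac{\splxalt{v}{\splxs}}{j\longedge{\splxs}}\geq \thickness{\splxs}$ as an inequality, but you preserve $\longedge{\opface{v}{\splxs}}=\longedge{\splxs}$ throughout, which lets the induction go through using only the face-heredity of thickness already stated after the definition. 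Your version is slightly more self-contained; the paper's version has the virtue of using a single chain for all three inequalities. One tiny wording point: your phrase ``not an endpoint of a longest edge'' should be read as ``not an endpoint of \emph{some fixed} longest edge'', since in a regular simplex every vertex lies on a longest edge; your counting argument ($j+1\geq 3$) makes clear this is what you intend.
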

\begin{proof}
  It follows directly from the volume formula~\eqref{eq:splx.vol.alt} that if $\splxs^{k-1}$ is a face with maximal volume in $\splxs^k = \{p_k\} \cup {\splxs^{k-1}}$, then $p_k$ is a vertex with minimal altitude in $\splxs^k$. Order the vertices of $\splxs = \asimplex{p_0,\ldots,p_j}$ so that $\splxs^k = \asimplex{p_0, \ldots, p_k}$ for each $k\leq j$. Then, inductively expanding the volume formula~\eqref{eq:splx.vol.alt}, we get
  \begin{equation*}
    \volop{\splxs^j} =  \prod_{k=1}^j \frac{\splxalt{p_k}{\splxs^k}}{k}.
  \end{equation*}
  The inequality $\fatness{\splxs} \leq \prod_{k=1}^j \thickness{\splxs^k}$ then follows from the definitions of thickness and fatness, and the observation that $\longedge{\splxs} \geq \longedge{\splxs^k}$ for all $k\leq j$. Also from the definition of thickness we have the trivial bound $\thickness{\splxs^k} \leq \frac{1}{k}$, from which the rightmost inequality follows.

  The lower bound also follows from induction on \Eqnref{eq:splx.vol.alt}. Using the same chain of faces and vertex labelling we get
  \begin{equation*}
    \begin{split}
      \fatness{\splxs}  &=
    \frac{\splxalt{p_j}{\splxs}}{j\longedge{\splxs}}
    \frac{\volop{\splxs^{j-1}}}{\longedge{\splxs}^{j-1}}\\
    &=
    \thickness{\splxs}\fatness{\splxs^{j-1}}
    \frac{\longedge{\splxs^{j-1}}^{j-1}} {\longedge{\splxs}^{j-1}}\\
    &\geq \thickness{\splxs} \thickness{\splxs^{j-1}}^{j-1}
    \frac{\longedge{\splxs^{j-1}}^{j-1}} {\longedge{\splxs}^{j-1}}
    \qquad \text{inducive hypothesis}\\
    &= \thickness{\splxs} 
    \left( \frac{\splxalt{p_{j-1}}{\splxs^{j-1}}} {(j-1)\longedge{\splxs}}
    \right )^{j-1} \\
    &\geq \thickness{\splxs}^j.
   \end{split}
  \end{equation*}
\end{proof}

Although \Lemref{lem:thick.fat.thick} gives the impression that fatness corresponds roughly to a power of thickness, we observe that thickness and fatness coincide for triangles, as well as edges, and vertices.

\Lemref{lem:thick.fat.thick}, provides a way to express our results in terms of fatness instead of thickness. For example, the quality bound for non-degeneracy in \Thmref{thm:thick.nondegen.riem}
 \begin{equation*}
   \thickness{\splxs(p)} > 10 \sqrt{\curvabsbnd}\rho,
 \end{equation*}
is attained if
 \begin{equation*}
   \fatness{\splxs(p)} > \frac{10 \sqrt{\curvabsbnd}\rho}{(n-1)!}.
 \end{equation*}

%

\newcommand{\ud}{\mathrm{d}}
\newcommand{\cref}[1]{(\ref{#1})}
\newcommand{\arcsinh}{\mathrm{arcsinh}}
\newcommand{\distmax}{D}


\section{The Toponogov point of view}
\label{sec:toponogov}

\subsection{Introduction}

In this appendix we discuss a different approach to finding conditions that guarantee that a Riemannian simplex is non-degenerate, that is diffeomorphic to the standard simplex. It is based on the Toponogov comparison theorem, instead of Rauch's theorem. This comparison theorem says that if the sectional curvatures of a manifold $M$ are bounded from above by $\curvupbnd$ and below by $\curvlowbnd$ and there is a geodesic triangle in $M$ of which we know the lengths of the three geodesics, then the angles of the triangle are bounded by the angles for a geodesic triangle in a space of constant curvature $\curvlowbnd$ or $\curvupbnd$ whose geodesics have the same lengths. Similarly if we are given lengths of two geodesics and the enclosed angle in a geodesic triangle in $M$, the length of the third geodesic is bounded by the lengths of the third geodesic in a geodesic triangle with the same lengths for two geodesics and enclosed angle in a space of constant curvature $\curvlowbnd$ or $\curvupbnd$. The dimension of the manifold shall be denoted by $n$.

\begin{figure}[!htb]
    \centerline{
\mbox{\includegraphics[height=6.2in]{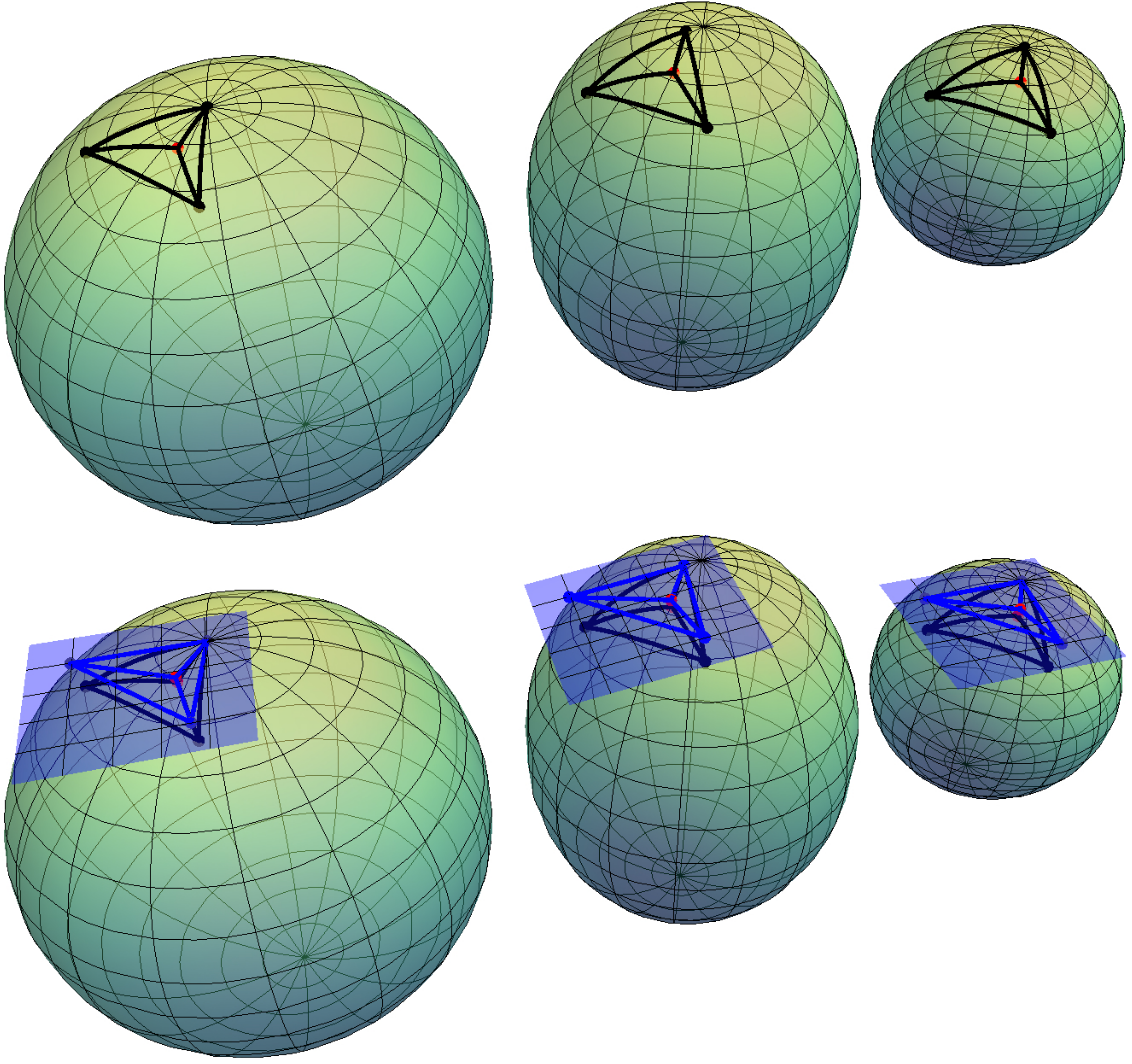}}
    }
\caption{ Pictorial overview of our approach: Given any point (red) and some vertices (black) we can first compare the angles between the geodesics at the red point on a surface of arbitrary bounded curvature (the ellipsoid in the middle) to the angles in the space of constant curvature. In these spaces of constant curvature, study the simplex by lifting to the tangent space at one of the vertices by the exponential map (bottom figure). }
\label{OverviewFig}
\end{figure}

Our non-degeneracy conditions are established in two steps:
First we note that if for any point $x$ in a neighbourhood of the vertices in the manifold there are $n$ tangents to geodesics connecting this point $x$ to some
subset of the vertices 
(the choice of subset does depend on $x$) that are linearly independent, then the Riemannian simplex is non-degenerate. This condition is equivalent with the tangent vectors being affinely independent, because for a point $x$ in a Riemannian simplex with barycentric coordinates $\lambda_i$ we have  
$ \sum \lambda_i v_i(x)=0,$ with $\sum \lambda_i= 1$, 
remember that we write $v_i(x) = \exp^{-1}_x(v_i)$. 
Secondly one has to find conditions on the vertex set in combination with geometric properties of the manifold, to be precise bounds on the sectional curvature, such that we can guarantee linear independence. We do so by looking at the angles between the tangent vectors discussed above. Using the Toponogov comparison theorem we can give estimates on the difference between these angles and the angles one would expect in a space of constant curvature $\curvupbnd$ and $\curvlowbnd$ respectively. These small neighbourhoods in spaces of constant curvature are in turn well approximated by small subsets in Euclidean space. To be precise we compare to the simplex we find by lifting the vertices to the tangent space at one of these vertices via the exponential map. Using these estimates we can prove that if the simplex is small enough compared to the quality of the Euclidean simplex, there is a linearly independent set of tangent vectors so that we have non-degeneracy. 
This approach puts the emphasis on the geodesics as apposed to barycentric coordinate functions, which provides us with a very concrete geometric picture, see for example figure \ref{OverviewFig}. 

The result to which this method leads to the following result:
\begin{dup}[Theorem~\ref{ConditionSimplices}]
Let $v_0, \ldots ,v_n$ be a set of vertices lying in a Riemannian manifold $M$,
whose sectional curvatures are bounded in absolute value by
$\curvabsbnd$, within a convex geodesic ball of radius $\distmax$ centred at one of the vertices ($v_r$) and such that $\sqrt{\curvabsbnd} \distmax <1/2$. If $\sigma^{\mathbb{E}} (v_r )$, the convex hull of $(\exp^{-1}_{v_r}(v_i))_{i=0}^{n}= (v_i(v_r))_{i=0}^{n}$,
satisfies
\begin{align} 
\left(\frac{ (n-1)! \textrm{vol} (\sigma^{\mathbb{E}} (v_r ) ) }{ 
(2\distmax)^n } \right)^2 > 160 n \sqrt{\curvabsbnd} \distmax, \tag{\ref{QualityCrit}}
\end{align}
then the Riemannian simplex with vertices $v_0, \ldots , v_n$ is non-degenerate, that is diffeomorphic to the standard $n$-simplex. 
\end{dup}

\subsection{Preliminaries}
\label{sectionPreliminaries}

The second step described in the introduction will use the
Toponogov Comparison Theorem \ref{LowerBoundTCT}. In particular, we use this result to provide bounds on the angles between the
vectors tangent to geodesics emanating from a point $x \in \sigma_M$
to $n$ (that is all but one) of the vertices of $\sigma_M$. These angle bounds are then
used to show that the reduced Gram
matrix associated with these vectors is
non-singular. Here we discuss the observations relating to (reduced)
Gram matrices and bounds on determinants that we will use.

\paragraph{Gram matrices}
\label{sectionGram}

Gram matrices can be applied
to general finite dimensional inner product spaces
($\mathbb{R}^n_G$, if the innerproduct $G=\delta_{ij}$, where $\delta_{ij}$ denotes the Kronecker delta, we shall write $\mathbb{R}^n_{\delta_{ij}} = \mathbb{E}^n$), such as the tangent
spaces ($T_x M$) of Riemannian manifolds with inner product $g(x)$. In this setting we
have
\begin{align} \det ( \langle w_i , w_j \rangle_G ) = \det (G) \det
(w_1 , \ldots , w_n )^2,   \nonumber
\end{align} where $w_1, \ldots, w_n \in \mathbb{R}^n_G$, and $(w_1 , \ldots , w_n )$ denotes the matrix with $w_i$ as columns. One can
already think of the vectors $w_i$ as the tangent vectors $w_i=v_i(x)=\exp_{x}^{-1} (v_i)$ discussed in the introduction. This can be
seen by taking the determinant of
\begin{align} (  \langle w_i, w_j \rangle_G ) =
(w_1, \ldots, w_n )^{t} (G) (w_1, \ldots, w_n ). \nonumber \end{align} This is an expression of the fact that the choise of the metric does not influence linear independence. 
Because we will be interested
in the angles (as these feature in the Toponogov Comparison Theorem), we consider instead the Gram matrix associated with the
normalized (we assume that $w_i \neq 0$) vectors $w_i/|w_i|_G$:
\begin{align}
\det ( \cos \theta_{i,j} ) =&  \det \left( \frac{\langle w_i, w_j \rangle_G}{|w_i|_G |w_j|_G } \right) \nonumber\\
  = & \frac{\det((w_1 ,\ldots, w_n ) (G) (w_1, \ldots, w_n ))}{|w_1|_G^2 |w_2|_G^2 \ldots |w_n|_G^2 }, \label{ToConvexHull}
\end{align}
where $\theta_{i,j}$ denotes the angle between $w_i$ and $w_j$. The determinant
of $( \cos \theta_{i,j} )$ is zero if and only if the determinant of
$(w_1, \ldots ,w_n )$ is zero, i.e., if and only if $w_1, \ldots ,w_n$
are linearly dependent. We shall refer to the matrix of cosines of
angles as the
\defn{reduced Gram matrix}.

\paragraph{Bounds on determinants} \label{SectionFriedland}

The following result by Friedland \cite{Friedland}, see also Bhatia
and Friedland \cite{BhatiaFriedland}, Ipsen and Rehman \cite{Ipsen}
and Bhatia \cite{Bhatia} problem I.1.6, will be essential to some
estimates below:
\begin{align}
|det (A+ E ) -\det (A) | \leq n \max \{ \| A \|_p , \| A+ E \| _p \}
^{n-1} \| E \|_p \label{FriedlandBound}
\end{align}
where $A$ and $E$ are $n \times n$-matrices and $\| \cdot \| _p $ is the
$p$-norm, with $1\leq p\leq \infty$, for linear operators: \begin{align} \| A \|_p = \max_{x \in
\mathbb{R}^n } \frac{ | A x|_p }{|x|_p } , \nonumber \end{align}
with $| \cdot |_p $ the $p$-norm on $\mathbb{R}^n$. In our context
$A$ will be the reduced Gram matrix for the Euclidean case and
$E$ the matrix with the small angle deviations from the Euclidean case (or rather the deviations
of their cosines) due to the local geometry, of which each entry is
bounded by some~$\epsilon$.  \newline From \eqref{FriedlandBound} we
see that in this particular context \begin{align} |\det (A+E)| &\geq
|\det A| - n (\max \{ \| A \|_\infty , \| A+ E \| _\infty \} )^{n-1} \| E
\|_\infty \nonumber \\ &\geq |\det(A)| - n 
\epsilon ,
\label{formulaFriedland}
\end{align}
where we use that every entry of $A$ and $A+E$ is bounded in absolute value by
$1$ because it they are reduced Gram matrices, a matrices of cosines.

\paragraph{Toponogov comparison theorems and spaces of constant curvature}
\label{subsectionToponogov}

We shall now first give the Toponogov Comparison Theorem and the
definitions which go with it. Here we follow Karcher
\cite{Karcher2}. Then we give some results comparing the cosine rules in small neighbourhoods in spaces of constant curvature to those of Euclidean space. As mentioned in the introduction we assume that we
work in neighbourhoods that lie within the convexity radius unless
mentioned otherwise.

We shall use the notation $\mathbb{H}^n(\curvlowbnd)$ for the space simply connected of dimension $n$ with constant sectional curvature $\curvlowbnd$.
\begin{de}
A \emph{geodesic triangle} $T$ in a Riemannian manifold consists of 
three minimizing geodesics connecting three points, sometimes also referred to as vertices. We stress that a geodesic triangle does not include an interior. Assume lower curvature bounds
$\curvlowbnd \leq K$ (or upper bounds $K \leq \curvupbnd$). A triangle with
the same edge lengths as $T$ in $\mathbb{H}^n(\curvlowbnd)$ (or
$\mathbb{H}^n(\curvupbnd)$),
 is called an \emph{Alexandrov triangle} $T_{\curvlowbnd}
$ (or $T_{\curvupbnd}$) associated with T, named after Alexandrov who used these in his study of convex surfaces \cite{Karcher2}. Note that any two choices of $T_{\curvlowbnd}$ (or $T_{\curvupbnd}$) are equivalent due to the constant curvature of the space.
Two edges of a geodesic triangle and
the enclosed angle form a \emph{hinge}; a
\emph{Rauch hinge} in $\mathbb{H}^n(\curvlowbnd)$ (or
$\mathbb{H}^n(\curvupbnd)$) of a given hinge, consists of two geodesics
emanating from a single point with the same lengths and enclosed
angles as the original hinge. The edge closing the Rauch hinge in $\mathbb{H}^n(\curvlowbnd)$ (or
$\mathbb{H}^n(\curvupbnd)$), that is the minimizing geodesic connecting the two endpoint of the geodesics emanating from a single point with the same lengths and enclosed angles as the hinge in a space of arbitrary curvature, will be called the \emph{Rauch edge}. \label{GeodesicT}
\end{de}

\begin{figure}[!htb]
    \centerline{
\mbox{\includegraphics[height=2.2in]{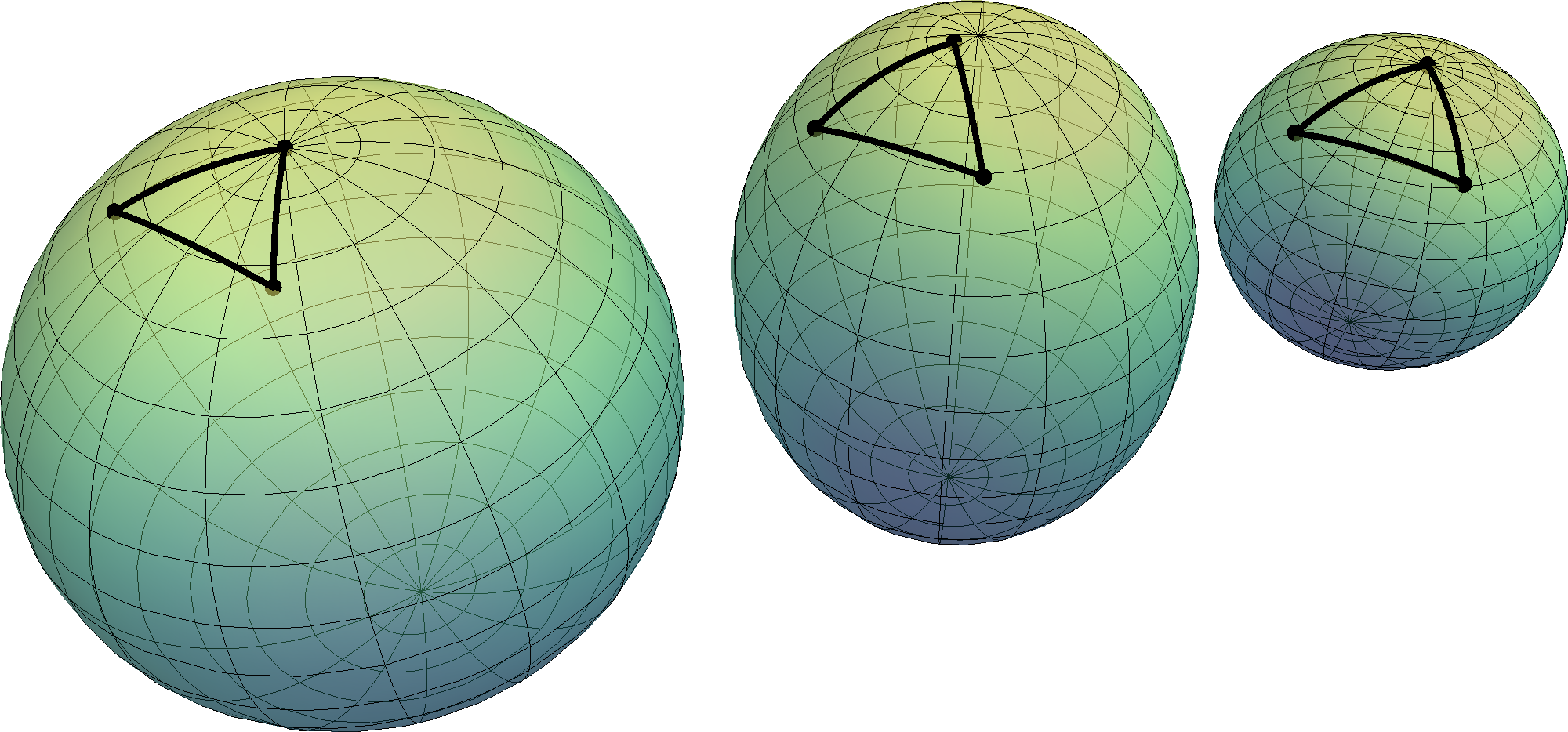}}
    }
\caption{An ellipsoid with a geodesic triangle and the Alexandrov
triangles in the spaces of constant curvatures, in this case both
elliptic spheres.} \label{FigureToponogov1}
\end{figure}



\begin{figure}[!htb]
    \centerline{
\mbox{
\begin{tikzpicture}
  \coordinate [label={below left:$A$}] (A) at (0, 0);
  \coordinate [label={above left:$C$}] (C) at (0.5, 4);
  \coordinate [label={above right:$B$}] (B) at (4, 1.5);
	\draw[thick] (A) to [out=75,in=-80] (C);
 	\draw[thick] (A) to [out=25,in=190] (B);
	\draw[thick] (C) to [out=-50,in=150] (B);
	\draw[thin] (A) ++ (0.271892, 0.126785) arc (25:75:0.3);
	\draw[thin] (C) ++ (0.0520945,-0.295442) arc (-80:-50:0.3);
	\draw[thin] (B) ++ (-0.259808, 0.15) arc (150:190:0.3);
	\node at (0.25,2) {c};
	\node at (2,0.73) {b};
	\node at (2.25,2.75) {a};
  \node at (0.35, 0.35) {$\alpha$};
	\node at (3.4, 1.6) {$\beta$};
	\node at (0.7,3.5) {$\gamma$};
\end{tikzpicture}
}
    }
\caption{Triangle with the standard symbols for angles and lengths}
\label{HingeTriangleAnglePic}
\end{figure}
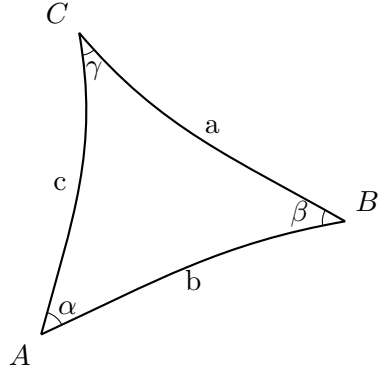

The Toponogov Comparison Theorem or Triangle Comparison Theorem reads
\begin{thm}[Toponogov Comparison Theorem]
Let $T$ be a
geodesic triangle in $M$ and assume that the sectional curvatures $K$ of $M$ satisfy the bounds
 $ \curvlowbnd \leq K \leq \curvupbnd$.
If $\curvupbnd>0$, assume also 
that the triangle perimeter is less then $2 \pi \curvupbnd^{-1/2}$. Then 
Alexandrov triangles  $T_{\curvlowbnd}$ and $T_{\curvupbnd}$ exist. Moreover, any angle $\alpha$ of $T$ satisfies
\begin{equation*}
    \alpha_{\curvlowbnd}  \leq \alpha \leq \alpha_{\curvupbnd},
\end{equation*}
where $\alpha_{\curvlowbnd}$ and $\alpha_{\curvupbnd}$ are the corresponding angles in $T_{\curvlowbnd}$ and $T_{\curvupbnd}$ respectively.
The length $c$ of the third edge closing a hinge is bounded in length by the lengths of the Rauch edges, $c_{\curvlowbnd}$ and $c_{\curvupbnd}$, closing the Rauch hinges:
\begin{align}
  c_{\curvlowbnd} \geq c \geq c_{\curvupbnd} . \nonumber
\end{align}
\label{UpperBoundTCT} \label{LowerBoundTCT}
\end{thm}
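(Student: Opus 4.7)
The plan is to attack this in three movements: establish existence of the model triangles, reduce the angle statement to the hinge (third-edge) statement, and then prove the hinge statement via a Hessian/Jacobi-field comparison along a side of the triangle.

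First I would verify existence of $T_{\curvlowbnd}$ and $T_{\curvupbnd}$. Given three lengths $a,b,c$ that arise as the edges of a geodesic triangle in $M$, the triangle inequality is inherited. In $\mathbb{H}^n(\curvlowbnd)$ and in $\mathbb{H}^n(\curvupbnd)$ the law of cosines for constant curvature lets us solve for an enclosed angle given the three sides, provided the sides are short enough; the perimeter bound $<2\pi/\sqrt{\curvupbnd}$ is exactly what ensures that the required configuration fits inside a convex ball of $\mathbb{H}^n(\curvupbnd)$, and for $\curvlowbnd\leq 0$ no extra hypothesis is needed since the model is an Hadamard manifold.

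Second, I would reduce the angle comparison to the hinge comparison. In a space of constant curvature, once two edge lengths $a,b$ meeting at a vertex $A$ are fixed, the law of cosines expresses the opposite edge length $c$ as a smooth, strictly monotone function of the enclosed angle $\alpha$. Consequently the statement $\alpha_{\curvlowbnd}\leq\alpha$ is equivalent to $c\leq c_{\curvlowbnd}$ and $\alpha\leq\alpha_{\curvupbnd}$ is equivalent to $c\geq c_{\curvupbnd}$, applied to each vertex in turn. So it suffices to prove the hinge bound $c_{\curvlowbnd}\geq c\geq c_{\curvupbnd}$.

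Third, I would prove the hinge bound via a one-variable comparison along the opposite side. Let $\gamma:[0,\ell]\to M$ be the minimizing geodesic from $B$ to $C$, and consider $f(s)=\tfrac{1}{2}d(A,\gamma(s))^2$. The quantity we need is $f(\ell)$, since this encodes $c$. The key step is to estimate $f''(s)$ using the Hessian of the squared distance $\tfrac{1}{2}d(A,\cdot)^2$ evaluated on $\gamma'(s)$. Under $K\leq\curvupbnd$, the Rauch comparison theorem (\Lemref{lem:poly.rauch}, in stronger form) applied to Jacobi fields along the radial geodesics from $A$ gives a lower bound on this Hessian matching the model $\mathbb{H}^n(\curvupbnd)$, which after integrating twice with the matching initial data at $s=0$ yields $f(\ell)\geq f_{\curvupbnd}(\ell)$, i.e. $c\geq c_{\curvupbnd}$. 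Under $K\geq\curvlowbnd$, one instead needs an \emph{upper} bound on the same Hessian, obtained from the Hessian comparison theorem for the modified distance function $\mathrm{md}_{\curvlowbnd}\!\circ d(A,\cdot)$, where $\mathrm{md}_\kappa$ is the standard solution of $y''+\kappa y=1$ vanishing at $0$; this integrates to $c\leq c_{\curvlowbnd}$.

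The main obstacle is the lower curvature bound. The Rauch theorem on which we have otherwise relied is a one-sided statement that only yields a lower bound on the length of a Jacobi field when curvature is bounded above, so a direct dualization does not reach the lower bound $c\leq c_{\curvlowbnd}$; moreover the global Toponogov statement requires an argument that does not break down beyond the conjugate radius of $M$. The clean resolution is the Hessian-comparison route for modified distance functions combined with the reduction to two-dimensional sections via totally geodesic planes in the model space, which is carried out in detail by Karcher~\cite{Karcher2}; I would invoke that treatment rather than reprove it here.
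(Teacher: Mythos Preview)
The paper does not prove this theorem. It is stated as background in Appendix~B.2 with the remark ``Here we follow Karcher~\cite{Karcher2}'', and the text proceeds immediately to the constant-curvature cosine rules without any argument. In other words, the paper treats Toponogov's theorem as a black-box input, exactly as one would expect for a classical comparison result.

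Your proposal therefore goes well beyond what the paper does: you outline a genuine proof strategy (existence of model triangles, equivalence of the angle and hinge formulations via monotonicity of the constant-curvature cosine law, and a Hessian/Jacobi-field comparison for the hinge bound). That outline is sound, and you correctly identify that the lower-curvature-bound half is the delicate part requiring the modified-distance Hessian comparison rather than a naive dualization of Rauch. But note that your last paragraph ends by invoking Karcher~\cite{Karcher2} for precisely that step---which is, in effect, the same citation the paper makes for the whole theorem. So despite the extra detail, you ultimately land in the same place as the paper: the result is quoted, not proved.

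If your goal was to match the paper, a single sentence citing~\cite{Karcher2} would have sufficed. If your goal was to supply an independent proof, you would need to actually carry out the Hessian comparison argument for $K\geq\curvlowbnd$ rather than defer it.
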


We also give the cosine rule which is of use in explicit
calculations involving the Toponogov comparison theorem. The cosine
rule for elliptic spaces, that is positive curved spaces of constant curvature, is
given in Section 18.6 of Berger \cite{BergerGeometryII} and Section
12.7 of Coxeter \cite{Coxeter} and reads
\begin{align} \cos \frac{a}{k} = \cos \frac{b}{k} \cos \frac{c}{k} + \sin \frac{b}{k} \sin \frac{c}{k} \cos \alpha,
\label{CosineRuleElliptic}
\end{align} where we assume that the Gaussian (in two dimensions) or sectional curvature $K$ satisfies
$K= 1/k^2$. The cosine
rule for hyperbolic spaces, that is negatively curved spaces of constant curvature, is given in Section 19.3 of Berger
\cite{BergerGeometryII} and Section 12.9 of Coxeter \cite{Coxeter}
to be  \begin{align} \cosh \frac{a}{k} = \cosh \frac{b}{k} \cosh
\frac{c}{k} - \sinh \frac{b}{k} \sinh \frac{c}{k} \cos \alpha,
\label{CosineRuleHyperbolic}
\end{align} where we assume that the Gaussian or sectional curvature is $K=-1/k^2$.

We now prove two lemmas for geodesic triangles in a space of constant curvature. In the following we use the notation
$(c^{\mathbb{E}})^2= a^2 + b^2 - 2 a b \cos \gamma $, to denote the length of the (Rauch) edge closing a hinge with lengths $a$ and $b$ and enclosed angle $\gamma$ in Euclidean space. In general we shall always use the index $\mathbb{E}$ to indicate (comparisons to) Euclidean space.  
\begin{lem}
If $M$ is a space of constant curvature and two of the edge-lengths ($a$, $b$) of a hinge in $M$ satisfy 
\begin{align}
a, b& \leq d_\textrm{max}/2 
& d_\textrm{max}/k & < 1/2, \nonumber
\end{align}
here $d_\textrm{max}$ is some distance bound on the geodesics in the space of constant curvature, 
then we have that the length of the (Rauch) edge of the hinge $c$, where 
$c^2 = (c^{\mathbb{E}})^2  + E'$, and $|E'| \leq 5 d_\textrm{max}^4 /k^2$ a measure for the deviation from the Euclidean case.  \label{lemSpaceConstantCurvature1}
\end{lem}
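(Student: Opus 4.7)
\noindent\textit{Proof plan.}\quad
The plan is to treat the elliptic case ($K=1/k^2$) and hyperbolic case ($K=-1/k^2$) by expanding the cosine rules~\eqref{CosineRuleElliptic} and~\eqref{CosineRuleHyperbolic} in powers of $x/k$, and then comparing with the Euclidean identity $(c^{\mathbb{E}})^2 = a^2+b^2-2ab\cos\gamma$. I would first observe that $c$ itself is controlled: in the hyperbolic case, the triangle inequality gives $c \le a+b \le d_{\max}$; in the elliptic case, Toponogov comparison with $\mathbb{R}^2$ (or a direct inspection of~\eqref{CosineRuleElliptic}) yields $c \le c^{\mathbb{E}} \le a+b \le d_{\max}$. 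The small-parameter assumption $d_{\max}/k < 1/2$ ensures that all arguments of the (hyperbolic) trigonometric functions lie in a range where their Taylor series converge quickly and remainders can be bounded by the first omitted term (or twice it, for the hyperbolic series).

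Next, I would perform the fourth-order expansions
\[
\cos(x/k) = 1 - \tfrac{x^2}{2k^2} + \tfrac{x^4}{24 k^4} + R_c(x),
\qquad
\sin(x/k) = \tfrac{x}{k} - \tfrac{x^3}{6k^3} + R_s(x),
\]
with analogous expansions (and sign changes) for $\cosh$ and $\sinh$. Substituting into the cosine rule and collecting terms, the zeroth and second order contributions reproduce exactly
\[
-\tfrac{c^2}{2k^2} = -\tfrac{a^2+b^2}{2k^2} + \tfrac{ab}{k^2}\cos\gamma,
\]
i.e.\ $c^2 = (c^{\mathbb{E}})^2$ at this order. All remaining contributions are of order $(d_{\max}/k)^4$ or higher; after multiplying through by $-2k^2$ (elliptic) or $2k^2$ (hyperbolic) they collect into an error term $E'/k^2$, where $E'$ is a polynomial in $a,b,c,\cos\gamma$ of total degree four in $a,b,c$, plus tail remainders from the Taylor expansions.

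The third step is to bound each term in $E'$ individually using $a,b \le d_{\max}/2$, $c\le d_{\max}$, and $|\cos\gamma|\le 1$. The explicit fourth-order terms arising from the expansion are of the form $a^2b^2$, $a^4+b^4$, $ab(a^2+b^2)\cos\gamma$, and $c^4$ (the latter from expanding $\cos(c/k)$ or $\cosh(c/k)$ to order four); each is at most a small constant multiple of $d_{\max}^4$. The tail remainders can be absorbed using standard bounds (e.g.\ $|R_c(x)| \le x^6/(6!\,k^6)$ for $\cos$, and similarly for the hyperbolic case where the alternating-sign control is replaced by a geometric-series bound valid for $x/k<1/2$), and these give $O(d_{\max}^6/k^4) = O(d_{\max}^4/k^2)\cdot(d_{\max}/k)^2$, hence a small correction.

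The only real obstacle is bookkeeping: the two sign conventions (elliptic versus hyperbolic) must be tracked carefully so that the leading error contributions do not cancel incorrectly, and the constants from the remainders must be kept small enough to fit into the stated bound of~$5$. I would finish by assembling the worst-case sum of coefficients in the elliptic case (which is the tighter of the two), verifying that
\[
|E'| \le \left(\tfrac{1}{2}\cdot\tfrac{1}{16} + 2\cdot\tfrac{1}{12}\cdot\tfrac{1}{16} + \tfrac{1}{3}\cdot\tfrac{1}{8} + \tfrac{1}{12} + \text{(tail)}\right) d_{\max}^4 \le 5\,d_{\max}^4,
\]
so that $|E'|/k^2 \le 5 d_{\max}^4/k^2$, as claimed.
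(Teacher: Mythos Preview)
Your proposal is correct and follows essentially the same route as the paper: both arguments Taylor-expand the spherical/hyperbolic cosine rule, identify the second-order part as the Euclidean law of cosines, and bound the remaining terms by a constant times $d_{\max}^4/k^2$, using $c\le a+b\le d_{\max}$ from the triangle inequality. The only organizational difference is that the paper stops one order earlier, writing $\cos y = 1 - \tfrac12 y^2 + E(y)$ and $\sin y = y(1+\tilde E(y))$ with crude bounds $|E|\le \tfrac{e}{4!}\phi_m^4$, $|\tilde E|\le \tfrac{e}{3!}\phi_m^2$ (the factor $e$ absorbing the hyperbolic case uniformly), and then sums about a dozen cross terms to reach $|E'|\le 5\phi_m^4$; you instead keep the explicit fourth-order coefficients and relegate only the sixth-order tails to a remainder, which yields a much smaller numerical constant ($\approx 1/6$ rather than $\approx 4.1$) before invoking the generous bound of $5$. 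One small remark: your claim that the elliptic case is ``the tighter of the two'' is slightly off---the explicit fourth-order contributions have the same absolute value in both cases (only the signs flip), and it is the hyperbolic tails that are larger, so the hyperbolic case is the one that governs the final constant; this does not affect the argument.
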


\begin{proof} 
Taylor's theorem implies that we have that
\begin{align} \sin(y) & = y (1+ \tilde{E}_s(y)) &
\cos (y) &=1-\frac{1}{2} y^2 + E_c(y) \nonumber \\
\sinh(y) & = y (1+ \tilde{E}_{sh}(y)) & \cosh (y) &=1+ \frac{1}{2}
y^2 + E_{ch}(y), \nonumber
\end{align} here $E$ stands for the error. These errors are bounded; if
we assume that $y\leq \phi_m <1$ we have that \begin{align}
|\tilde{E}_s(y)| &\leq \frac{1}{3!} \phi_m^2  \leq \frac{1}{3!} e
\phi_m^2 &
|E_c (y)| & \leq \frac{1}{4!} \phi_m^4 \leq \frac{1}{4!} e \phi_m^4 \nonumber \\
|\tilde{E}_{sh}(y)| & \leq \frac{1}{3!} e \phi_m^2 & |E_{ch} (y)|
&\leq \frac{1}{4!} e \phi_m^4 ,\nonumber \end{align} where $e$ is
Euler's number. It is convenient to use only the weaker bounds found
from the hyperbolic functions as this affords a universal approach. We therefore drop the subscript and write $E$ and $\tilde{E}$.

We define 
\begin{align} \phi_1& =a/k & \phi_2&=b/k & \phi_3 &=c/k & \phi_m = d_{\textrm{max}}/k . \nonumber
\end{align}
Using these the cosine rules read
\begin{align} \cos   \phi_3 & = \cos \phi_1  \cos  \phi_2 + \sin \phi_1 \sin  \phi_2 \cos \gamma  & & \textrm{(elliptic)} \nonumber \\
\cosh   \phi_3 & = \cosh \phi_1  \cosh  \phi_2 - \sinh \phi_1 \sinh
\phi_2 \cos \gamma  & & \textrm{(hyperbolic)} .\nonumber
\end{align} We shall now bound $ \phi_3$, assuming $\phi_1 , \phi_2$ and $\gamma$ given.
Because $\phi_1, \phi_2 \leq \phi_m /2<1/4$ we have that 
implies that $ \phi_3 \leq  \phi_m$ by the triangle inequality. 
We find
\begin{align}\frac{1}{2}  \phi_3^2 + E ( \phi_3) =
\frac{1}{2} \phi_1^2 + \frac{1}{2}  \phi_2^2 + \frac{1}{4} \phi_1^2
 \phi_2^2 + E ( \phi_1) + E ( \phi_2)+ \frac{1}{2} E ( \phi_1)  \phi_2^2 + \frac{1}{2}
E ( \phi_2) \phi_1^2 \nonumber
\\+  E ( \phi_1) E ( \phi_2)- \phi_1  \phi_2 \cos \gamma -  \phi_1  \phi_2 \cos \gamma
(\tilde{E}( \phi_1)+ \tilde{E}( \phi_2)+ \tilde{E}( \phi_1)
\tilde{E}( \phi_2)) \nonumber
\end{align} and thus \begin{align}  \phi_3^2  = &
 \phi_1^2 +   \phi_2^2 - 2 \phi_1  \phi_2 \cos \gamma + \frac{1}{2}
 \phi_1^2  \phi_2^2 + 2 E ( \phi_1) + 2 E ( \phi_2)+  E ( \phi_1)  \phi_2^2 +  E ( \phi_2) \phi_1^2 \nonumber
\\&+  2 E ( \phi_1) E ( \phi_2) - 2  \phi_1  \phi_2 \cos \gamma
(\tilde{E}( \phi_1)+ \tilde{E}( \phi_2)+ \tilde{E}( \phi_1)
\tilde{E}( \phi_2))- E ( \phi_3) \nonumber \\ =& \phi_1^2 + \phi_2^2
- 2 \phi_1  \phi_2 \cos \gamma +E'( \phi_1, \phi_2, \phi_3)
\label{LengthApproxEucl1}
\end{align} with
\begin{align}| E'( \phi_1, \phi_2, \phi_3) | \leq & | \frac{1}{2}
 \phi_1^2  \phi_2^2 + 2 E ( \phi_1) + 2 E ( \phi_2)+  E ( \phi_1)  \phi_2^2 +  E ( \phi_2) \phi_1^2 \nonumber
\\& +  2 E ( \phi_1) E ( \phi_2) - 2  \phi_1  \phi_2 \cos \gamma
(\tilde{E}( \phi_1)+ \tilde{E}( \phi_2)+ \tilde{E}( \phi_1)
\tilde{E}( \phi_2))- E ( \phi_3)| \nonumber \\  \leq &  |
\frac{1}{2} \phi_1^2  \phi_2^2| + 2 |E ( \phi_1)| + 2 |E ( \phi_2)|+
|E ( \phi_1)|  + | E ( \phi_2) | \nonumber
\\& +  | E ( \phi_1)| +
2 | \phi_1  \phi_2| |\tilde{E}( \phi_1)|+ 2 | \phi_1  \phi_2|
|\tilde{E}( \phi_2)|+ 2| \phi_1  \phi_2| | \tilde{E}( \phi_2)|+| E (
\phi_3)| \nonumber
\\ = & | \frac{1}{2} \phi_1^2  \phi_2^2| + 4 |E ( \phi_1)| + 3 |E ( \phi_2)|  \nonumber
\\& +
2 | \phi_1  \phi_2| |\tilde{E}( \phi_1)|+ 2 | \phi_1  \phi_2|
|\tilde{E}( \phi_2)|+ 2| \phi_1  \phi_2| | \tilde{E}( \phi_2)|+| E ( \phi_3)|\nonumber \\
\leq  & \frac{1}{2} \phi_m^4 +\frac{7}{4! } e \phi_m^4 +
\frac{6}{3!} e \phi_m^4+ \frac{1}{4!} e \phi_m^4 \nonumber \\ \leq &
5 \phi_m^4 . \label{BoundErrorLength1}
\end{align}
\end{proof} 

We study a geodesic triangle in a space of constant curvature for which we have some estimates on the edge-lengths. To be precise we assume that the edges of the geodesic triangle are themselves the closing (Rauch) edges of some hinges. Moreover, we assume that the conditions of the previous lemma are satisfied so that have the estimates on the deviation of the lengths of these closing edges compared to the value expected in Euclidean space. We provide to have similar bounds on the angles of the geodesic triangle for which we have estimates on the edge-lengths, that is we give bounds on the deviation of the angles of the geodesic triangle compared to the value expected in Euclidean space. To this end we define for some lengths $a^{\mathbb{E}}$, $b^{\mathbb{E}}$ and $c^{\mathbb{E}}$ the angle $\alpha^{\mathbb{E}}$ by 
\begin{align} \cos \alpha^{\mathbb{E}} = \frac{(b ^{\mathbb{E}
})^2 + (c^{\mathbb{E}})^2 - (a^{\mathbb{E} })^2 }{ 2
b^{\mathbb{E} } c^{\mathbb{E} }}. \nonumber \end{align} 

\begin{lem} \label{lemSpaceConstantCurvature2}
If $M$ is a space of constant sectional curvature and the edge-lengths ($a$, $b$, $c$) of a geodesic triangle in $M$ satisfy
\footnote{We have included the factor $1/k$ to shorten the calculation below}
\begin{align} (a/k)^2 &= (a^{\mathbb{E}}/ k)^2 + E_1, & (b/k)^2 &= (b^{\mathbb{E}}/k)^2 + E_2,& (c/k)^2 &= (c^{\mathbb{E}}/k)^2 + E_3 , \end{align}
with that
$E_1,E_2,E_3 \leq  5 d_\textrm{max}^4 /k^4$, 
and
\begin{align} (d_\textrm{max} /k)^{3/2} <a^{\mathbb{E}}/k, b^{\mathbb{E}}/k, c^{\mathbb{E}}/k
< d_\textrm{max} /k < 1/2,  
\label{LowerBoundLengths}
\end{align} 
then 
\begin{align} | \cos \alpha- \cos \alpha^{\mathbb{E}} |  \leq  80
d_\textrm{max}/k. \label{ResultAnglesAppendix} \end{align}
\end{lem}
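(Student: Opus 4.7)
The plan is to invert the cosine rule in the space of constant curvature to express $\cos\alpha$ in terms of the actual edge lengths $a,b,c$, then substitute the hypothesised relations to the Euclidean lengths and compare term-by-term with the Euclidean cosine rule for $\alpha^{\mathbb{E}}$.

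First I would apply Lemma~\ref{lemSpaceConstantCurvature1} to the hinge formed by the two edges $b$ and $c$ meeting at angle $\alpha$: the Rauch edge closing this hinge is the third edge $a$ of the triangle, so
\[
 a^2 \;=\; b^2 + c^2 - 2bc\cos\alpha + E_\alpha,
 \qquad |E_\alpha|=O(d_\textrm{max}^4/k^2).
\]
(Strictly, Lemma~\ref{lemSpaceConstantCurvature1} asks that the hinge sides be bounded by $d_\textrm{max}/2$, whereas here they can be as large as $d_\textrm{max}$; rerunning its proof with $\phi_m$ replaced by $2\phi_m$ only enlarges the constant in $E_\alpha$, which is absorbed into the final $80$.) Solving for $\cos\alpha$ and inserting $a^2=(a^{\mathbb{E}})^2+k^2E_1$ and the analogues for $b,c$ yields
\[
 \cos\alpha \;=\; \frac{(b^{\mathbb{E}})^2+(c^{\mathbb{E}})^2-(a^{\mathbb{E}})^2 + \Delta N}{2bc},
 \qquad \Delta N := k^2(E_2+E_3-E_1)+E_\alpha,
\]
with $|\Delta N|=O(d_\textrm{max}^4/k^2)$.

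Comparing with the Euclidean cosine rule for $\cos\alpha^{\mathbb{E}}$ via the standard split
\[
 \cos\alpha-\cos\alpha^{\mathbb{E}}
 \;=\; \frac{\Delta N}{2bc}
 \;+\; \cos\alpha^{\mathbb{E}}\cdot\frac{b^{\mathbb{E}} c^{\mathbb{E}} - bc}{bc},
\]
the remaining task is to bound the denominators from below and $|bc-b^{\mathbb{E}} c^{\mathbb{E}}|$ from above. This is where the lower bound $a^{\mathbb{E}},b^{\mathbb{E}},c^{\mathbb{E}}\ge k(d_\textrm{max}/k)^{3/2}$ enters crucially: it gives $b^{\mathbb{E}} c^{\mathbb{E}}\ge d_\textrm{max}^3/k$, and then $|b-b^{\mathbb{E}}|\le |b^2-(b^{\mathbb{E}})^2|/b^{\mathbb{E}}\le 5d_\textrm{max}^4/(k^2 b^{\mathbb{E}})$ gives $|b-b^{\mathbb{E}}|/b^{\mathbb{E}}\le 5 d_\textrm{max}/k$, and likewise for $c$. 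In particular $bc\ge\tfrac12 b^{\mathbb{E}} c^{\mathbb{E}}$ after optimising the constants (the hypothesis $d_\textrm{max}/k<1/2$ is essentially just enough), so $|\Delta N|/(2bc)=O(d_\textrm{max}/k)$, and the second term, since $|\cos\alpha^{\mathbb{E}}|\le 1$, is $O(d_\textrm{max}/k)$ as well.

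Summing the contributions with explicit constants produces the claimed $80\,d_\textrm{max}/k$. The main obstacle is purely bookkeeping: the quartic numerator error $d_\textrm{max}^4/k^2$ is only just cancelled by the cube $d_\textrm{max}^3/k$ coming from the $(d_\textrm{max}/k)^{3/2}$ lower bound on edge lengths, so the overall linearity in $d_\textrm{max}/k$ is tight and the numerical constant $80$ is essentially forced by summing the four independent error contributions $E_\alpha,k^2E_1,k^2E_2,k^2E_3$ together with the denominator perturbation $|bc-b^{\mathbb{E}} c^{\mathbb{E}}|$.
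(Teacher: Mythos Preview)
Your approach is essentially the same as the paper's: both invert the approximate Euclidean cosine rule obtained from Lemma~\ref{lemSpaceConstantCurvature1}, substitute the relations $(\psi_i)^2=(\psi_i^{\mathbb{E}})^2+E_i$, and then split the difference $\cos\alpha-\cos\alpha^{\mathbb{E}}$ into a numerator perturbation term and a denominator perturbation term, controlling the latter via the lower bound $\psi_i^{\mathbb{E}}\ge\phi_m^{3/2}$. The paper writes the denominator as $2\psi_2^{\mathbb{E}}\psi_3^{\mathbb{E}}\sqrt{1+E_2/(\psi_2^{\mathbb{E}})^2}\sqrt{1+E_3/(\psi_3^{\mathbb{E}})^2}$ and expands the square-root factors, whereas you keep the denominator as $2bc$ and bound $|bc-b^{\mathbb{E}}c^{\mathbb{E}}|$ directly; these are algebraically equivalent rearrangements. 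Your explicit acknowledgment that Lemma~\ref{lemSpaceConstantCurvature1} is being applied with hinge sides up to $d_\textrm{max}$ rather than $d_\textrm{max}/2$ (and that the constant absorbs this) is a point the paper glosses over.
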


\begin{proof}
To avoid having to drag along the $1/k$ we shall write $\psi_1= a/k$,  $\psi_1^{\mathbb{E}}= a^{\mathbb{E}}/k$\, et cetera. 
Using the previous lemma we see that 
$\psi_1$, $\psi_2$, $\psi_3$ satisfy \begin{align} (\psi_1) ^2 =
(\psi_2) ^2+(\psi_3) ^2- \psi_2 \psi_3 \cos \alpha +E_ 4 , \nonumber
\end{align} PErforming a calculation similar to the one in the previous lemma we find that
\begin{align}
| \cos \alpha - \cos \alpha^{\mathbb{E}} | &= \left | \frac{
(\psi^{\mathbb{E}}_2)^2+ (\psi^{\mathbb{E}}_3)^2 -
(\psi^{\mathbb{E}}_1)^2 + E_2+E_3 +E_4 -E_1} {2 \psi^{\mathbb{E}}_2
\psi^{\mathbb{E}}_3 \sqrt{1 + \frac{E_2}{ (\psi^{\mathbb{E}}_2)^2}
}\sqrt{1 + \frac{E_3}{ (\psi^{\mathbb{E}}_3)^2} } } - \frac{
(\psi^{\mathbb{E}}_2)^2+ (\psi^{\mathbb{E}}_3)^2 -
(\psi^{\mathbb{E}}_1)^2 } {2 \psi^{\mathbb{E}}_2 \psi^{\mathbb{E}}_3
 } \right| \nonumber \\ & \leq \left| \frac{
(\psi^{\mathbb{E}}_2)^2+ (\psi^{\mathbb{E}}_3)^2 -
(\psi^{\mathbb{E}}_1)^2 } {2 \psi^{\mathbb{E}}_2 \psi^{\mathbb{E}}_3
 }  \frac{4 E_2}{ (\psi_2^{ \mathbb{E}})^2} \right | + \left| \frac{
(\psi^{\mathbb{E}}_2)^2+ (\psi^{\mathbb{E}}_3)^2 -
(\psi^{\mathbb{E}}_1)^2 } {2 \psi^{\mathbb{E}}_2 \psi^{\mathbb{E}}_3
 }  \frac{4 E_3}{ (\psi_3^{ \mathbb{E}})^2} \right | \nonumber \\ & \phantom{\leq} + \left | \frac{
 E_2+E_3 +E_4 -E_1} {2 \psi^{\mathbb{E}}_2 \psi^{\mathbb{E}}_3 } \right| \left| \left (1 + \frac{2 E_2}{
 (\psi^{\mathbb{E}}_2)^2} \right )
 \left (1 + \frac{E_3}{ (\psi^{\mathbb{E}}_3)^2} \right) \right|  \nonumber \\
& \leq \frac{ 4 E_2}{(\psi_2^{\mathbb{E}})^2} + \frac{ 4
E_3}{(\psi_3^{\mathbb{E}})^2} + 40 \phi_m  \nonumber \\ & \leq  80
\phi_m . \nonumber \end{align}
\end{proof}

\subsection{Relation with linear independence}
\label{sec:rel.w.aff.indep}

In Euclidean simplex is non-degenerate if and only if for any point $x$ in Euclidean space we can find $n$ vertices
such that the vectors from $x$ to the vertices are linearly independent. Linear independence likewise plays an
important role in the definition of a non-degenerate Riemannian
simplex. We remind ourselves, see Section \ref{sec:bary.map}, that a Riemannian simplex $\sigma_M$ is
\defn{non-degenerate} if the barycentric coordinate map
$\stdsplxn \to \sigma_M$ is a smooth embedding.

\begin{lem}
\label{ConsequencesLinearIndependence} If for any  $x$ in the image
of the map given in Definition \ref{def:riem.splx} ($\sigma_M$)
there are $n$ tangents to geodesics connecting this point $x$ to some
subset of the vertices $v_0 , \ldots , v_{j-1}, v_{j+1}, \ldots, v_n$
(this choice does depend on $x$) that are linearly independent then
\begin{itemize}
\item[$\bullet$] The map $\stdsplxn \to  \sigma_{M}$ is
bijective
\item[$\bullet$] The inverse of $\stdsplxn \to
\sigma_{M}$ is smooth \end{itemize}
\end{lem}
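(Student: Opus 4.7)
\textit{Proof plan.}\quad The plan is to reduce the lemma to the affine independence characterization of \Propref{prop:affine.indep}: the barycentric coordinate map $\gbarymapsig: \stdsplxn \to \sigma_M$ is a smooth embedding (hence bijective onto its image with smooth inverse) precisely when, for every $x \in \sigma_M$, the lifted vertices $\{v_i(x)\}_{i=0}^n$ are affinely independent in $T_x M$. The hypothesis supplies only linear independence of some $n$-subset $\{v_i(x)\}_{i \neq j}$ (with the index $j = j(x)$ depending on the point), so the heart of the argument is to upgrade this to affine independence of all $n+1$ vectors.

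Fix $x \in \sigma_M$ and let $\lambda \in \stdsplxn$ be barycentric coordinates of $x$, so that $\sum_i \lambda_i v_i(x) = 0$ and $\sum_i \lambda_i = 1$ with $\lambda_i \geq 0$. First I would observe that the index $j$ given by the hypothesis must satisfy $\lambda_j > 0$: otherwise the constraint reduces to $\sum_{i \neq j} \lambda_i v_i(x) = 0$, and linear independence of $\{v_i(x)\}_{i \neq j}$ forces every $\lambda_i = 0$, contradicting $\sum \lambda_i = 1$. Since the $n$ linearly independent vectors $\{v_i(x)\}_{i \neq j}$ then form a basis of $T_x M$, I expand $v_j(x) = \sum_{i \neq j} c_i v_i(x)$ and match with $\sum_i \lambda_i v_i(x) = 0$ to obtain $c_i = -\lambda_i/\lambda_j$ and hence $1 - \sum_{i \neq j} c_i = 1/\lambda_j > 0$. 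Writing the differences $v_i(x) - v_j(x)$ in this basis, the resulting $n \times n$ matrix is $I - \mathbf{c}\mathbf{1}^T$ with $\mathbf{c} = (c_i)_{i\neq j}$, and the matrix-determinant lemma gives $\det = 1 - \sum c_i = 1/\lambda_j \neq 0$. Thus $\{v_i(x) - v_j(x)\}_{i \neq j}$ is linearly independent, which is precisely affine independence of $\{v_i(x)\}_{i=0}^n$.

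With affine independence established at every $x \in \sigma_M$, both conclusions follow. For bijectivity: surjectivity is tautological since $\sigma_M$ is defined as the image of $\gbarymapsig$, and for injectivity I note that if $\gbarymapsig(\lambda) = x = \gbarymapsig(\tilde\lambda)$, the difference $\mu = \lambda - \tilde\lambda$ satisfies $\sum_i \mu_i v_i(x) = 0$ with $\sum_i \mu_i = 0$, which by affine independence forces $\mu = 0$. For smoothness of the inverse, the computation leading to \Eqnref{eq:diff.bary.map} together with \Eqnref{eq:matrix.of.deriv} shows that $d\gbarymapsig$ has full rank at a preimage of $x$ exactly when $\tilde P(x)$ is non-singular, i.e., under the same affine independence condition just established. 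The inverse function theorem then gives a smooth local inverse near each point of $\sigma_M$, and injectivity assembles these local inverses into a single global smooth inverse.

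The only non-trivial step is the algebraic bridge from linear independence of $n$ of the $v_i(x)$ to affine independence of all $n+1$: this implication is false in general, and it is the barycentric identity $\sum \lambda_i v_i(x) = 0$ with $\lambda \in \stdsplxn$ that makes it work by forcing $\lambda_j > 0$ and pinning $\sum c_i$ strictly below $1$.
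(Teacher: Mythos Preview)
Your proof is correct, and the injectivity argument is essentially the paper's: both observe that linear independence of $\{v_i(x)\}_{i\neq j}$ together with $\sum_i\lambda_i v_i(x)=0$ forces $\lambda_j>0$, and then compare two expressions for $v_j(x)$ to rule out a second preimage. Where you diverge is in what you do next. The paper stays elementary and self-contained: for smoothness it writes down the explicit formula
\[
\big(v_0(x),\ldots,v_{j-1}(x),v_{j+1}(x),\ldots,v_n(x)\big)^{-1}v_j(x)=\tfrac{1}{\lambda_j}(\lambda_0,\ldots,\lambda_{j-1},\lambda_{j+1},\ldots,\lambda_n)^t,
\]
which is manifestly smooth in $x$ because the $v_i(x)$ are smooth and the matrix is invertible by hypothesis (and $\lambda_j$ is then recovered from $\sum_i\lambda_i=1$). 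You instead upgrade the hypothesis to affine independence of all $n+1$ vectors via the determinant $\det(I-\mathbf{c}\mathbf{1}^T)=1/\lambda_j$, and then invoke the differential formula \eqref{eq:diff.bary.map}--\eqref{eq:matrix.of.deriv} and the inverse function theorem. Your route buys a clean connection to the main body---indeed, once affine independence is established at every $x\in\sigma_M$ you could simply cite \Propref{prop:affine.indep} and stop, since ``non-degenerate'' there means precisely ``smooth embedding''. The paper's route buys independence from that machinery and an explicit inverse.
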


In the proof we shall need the following observation: Within any
ball smaller than the injectivity radius containing $v_i$, the vector field $v_i(x) =\exp_{x} ^{-1}(v_i)$
depends smoothly on the point $x$ for all $x \neq v_i$.  This is obvious if we
consider Riemannian normal coordinates at $v_i$. The geodesic
between $x$ and origin ($v_i$) is a straight line, that depends
smoothly on $x$. The same holds for the tangent to the geodesic at $x$, this is precisely
$v_i(x)=\exp_{x} ^{-1}(v_i)$.

\begin{proof}
We now prove the first of our claims by contradiction. Let us assume
that \begin{align} \sum \lambda_i v_i(x) =  \sum
\tilde{\lambda}_i v_i(x)  =0 \nonumber  \end{align} for
some $\lambda, \tilde{\lambda} \in \stdsplxn $, $\lambda
\neq \tilde{\lambda} $.  Because $v_0(x), \ldots
,v_{j-1}(x) ,v_{j+1} (x), \ldots , 
v_n(x) $ are assumed to be linearly independent we have $\lambda_j
\neq 0, \tilde{\lambda}_j \neq 0$. This mean that we can solve for
$v_j$ in both cases, so
\begin{align} \frac{ \lambda_0}{\lambda_j} v_0(x)+ \ldots+\frac{
\lambda_{j-1}}{\lambda_j} v_{j-1}(x)+
\frac{\lambda_{j+1}}{\lambda_j} v_{j+1}(x)+\ldots  + \frac{
\lambda_{n}}{\lambda_j} v_{n}(x)  = \nonumber \\
\frac{ \tilde{\lambda} _0}{\tilde{\lambda}_j} v_0(x)+ \ldots+\frac{
\tilde{\lambda}_{j-1}}{\tilde{\lambda}_j} v_{j-1}(x) +
\frac{\tilde{\lambda}_{j+1}}{\tilde{\lambda}_j} v_{j+1}(x) +\ldots  +
\frac{ \tilde{\lambda}_{n}}{\tilde{\lambda}_j} v_{n} (x) . \nonumber
\end{align}
This contradicts the assumption of linear independence. This
establishes injectivity.

We can use a similar argument to show that the inverse of
$\stdsplxn \to  \sigma_{M}$ is smooth. As we have seen
linear independence implies that $\lambda_j \neq 0$, which means
that we have
\begin{align} \lambda_0 v_0(x)+ \ldots +
\lambda_{j-1} v_{j-1}(x) + \lambda_{j+1} v_{j+1}(x) + \ldots + \lambda_{n} v_{n}(x) & =-\lambda_j
v_{j}(x). \nonumber
\end{align} We can now regard the left hand side as the product of
the matrix with columns $ (v_{i}(x))_{i \neq j}$ with the
vector $(\lambda_i)_{i\neq j}$. We can divide by $-\lambda_j$ and
bring the matrix to the right hand side by inverting, because
$\{v_{i}(x)\}_{i \neq j}$  is a linear independent set
this is possible. We now find \begin{align} (v_0(x)  ,
\ldots ,v_{j-1}(x)  ,v_{j+1}(x)  ,\ldots
v_{n}(x) ) ^{-1}& v_{j}(x)   =
\frac{1}{\lambda_j} (\lambda_0 , \ldots
,\lambda_{j-1},\lambda_{j+1} ,\ldots , \lambda_n)^{t} , \nonumber
\end{align} which is smooth because $v_{i}(x)$ is smooth
and $\{v_{i}(x)\}_{i \neq j}$ are linear independent by
assumption.
\end{proof}

In Lemma \ref{ConsequencesLinearIndependence} we refer to points
lying in $\sigma_M$, because $\sigma_M$ is not so easy to determine
a priori, we will need to determine a neighbourhood that contains
$\sigma_M$ where we can determine linear independence. To this end
we observe the following:
\begin{remark}
\label{RemarkNeighbourhood} $\sigma_M$ lies within a ball 
centred at any of the vertices $v_r$ of radius $\distmax$, where $\distmax= \max d_M(v_i,v_r)$, provided $\distmax$ is smaller than the injectivity radius and the ball is convex.
\end{remark}
Karcher \cite{Karcher} noted that the centre of mass of any mass distribution is contained in any convex set that contains the support of the mass distribution, so in particular this ball.

\subsection{Determining linear independence}
\label{SubsectionLinearIndependence}

In the previous subsection we established that if for any point $x
\in \sigma_M$ there are $n$ tangents to geodesics connecting this
point to some subset of the vertices $v_0 , \ldots , v_{j-1},
v_{j+1}, \ldots, v_n$ (depending on $x$) are linearly independent,
then the simplex $\sigma_M$ is well defined. In this subsection we shall
formulate conditions on the vertex set $v_0 , \ldots,  v_n$ such
that we can guarantee linear independence. These conditions are
simple for surfaces. For higher dimensional manifolds we shall need
bounds on the quality of the simplex found by taking the convex hull
of the image of the inverse exponential map at one of the vertices. The
quality of the simplex is considered good if the ratio between the
volume of the simplex and the $n^{\textrm{th}}$ power of the largest edge length is large, which we shall make precise in Theorem \ref{ConditionSimplices}, see also \cite{whitney1957, boissonnat2013manmesh.inria}.

As mentioned, linear dependence for surfaces is easy to determine.
Let us suppose $v_0(x)=\exp_{x}^{-1}(v_0)$, $v_1(x)$,
$ v_2(x) $ do not span $T_x M$. Because $M$ is a surface
it follows that $v_0(x)$, $v_1(x)$,
$v_2(x) $ are co-linear. This is in turn equivalent to
$v_0,v_1$ and $v_2$ lying on a geodesic. Using lemma
\ref{ConsequencesLinearIndependence} we find that $\sigma_M$ is
diffeomorphic to the standard simplex if all three vertices do not
lie on a geodesic. In the two dimensional setting bijection has been argued previously
by Rustamov \cite{Rustamov}.

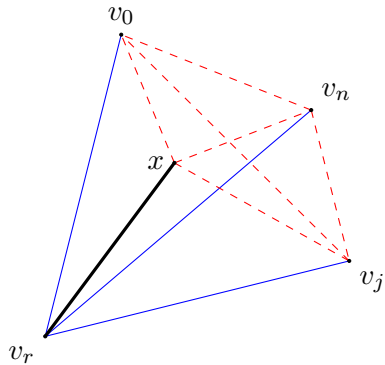
\begin{figure}[!htb]
    \centerline{
\mbox{
\begin{tikzpicture}
  \coordinate [label={below left:$v_r$}] (A) at (0, 0);
  \coordinate [label={above:$v_0$}] (B) at (1, 4);
  \coordinate [label={below right:$v_{j}$}] (C) at (4, 1);
  \coordinate [label={above right:$v_n$}] (D) at (3.5, 3);
  \coordinate [label={left:$x$}] (E) at (1.7, 2.3);
  \draw [color=blue] (A) -- (B);
  \draw [color=blue] (A) -- (C);
  \draw [color=blue] (A) -- (D);
 	\draw [very thick] (A) -- (E);
	\draw [color=red,dashed] (E) -- (B);
  \draw [color=red,dashed] (E) -- (C);
  \draw [color=red,dashed] (E) -- (D);
  \draw [color=red,dashed] (C) -- (B);
  \draw [color=red,dashed] (D) -- (B);
  \draw [color=red,dashed] (D) -- (C);
	\draw[fill] (A) circle (0.02);
	\draw[fill] (B) circle (0.02);
	\draw[fill] (C) circle (0.02);
	\draw[fill] (D) circle (0.02);
	\draw[fill] (E) circle (0.02);
\end{tikzpicture}
}
    }
\caption{A schematic depiction of $\sigma^{\mathbb{E}} (v_r )$, where we use red dotted
lines to indicate that these lengths of these edges are not equal to
the lengths of the corresponding edges in $\sigma_M$.  }
\label{EuclideanSimplex}
\end{figure}

Returning to manifolds of arbitraty dimension, we discuss conditions such that for any point $x$ in a ball of
radius $\distmax$ centred at the vertex $v_r$, the vectors $v_i(x)$
in the tangent space at $x$ form an affinely independent set. Because of Remark \ref{RemarkNeighbourhood} this is the neighbourhood of interest, because it suffices to show independence here. Assume that the sectional curvatures $K$ of $M$ are bounded in
absolute value: $|K| \leq \curvabsbnd$. 
Define $\sigma^{\mathbb{E}} (v_r )$ to be the convex hull of
$(v_i(v_r))_{i=0}^{n}$ in $T_{v_r}M$.  It will be on $\sigma^{\mathbb{E}}(v_r)$ that we impose condition to ensure that the Riemannian simplex $\sigma_M$ in non-degenerate. Note that given $\sigma^{\mathbb{E}}(v_r)$ we in particular have the lengths of all geodesics from $v_r$ to $v_i$ and the angles between their tangents. Using the Toponogov
comparison theorem, we bound $d_\man(x,v_i)$ for each $i$ by means of
Rauch hinges in $\mathbb{H}^n(\curvabsbnd)$ and
$\mathbb{H}^n(-\curvabsbnd)$ the lengths of the closing edges of the hinges 
are denoted by
$d_{ \mathbb{H}^n(\curvabsbnd) } (x,v_i) $ and $d_{ \mathbb{H}^n(-\curvabsbnd) } (x,v_i) $. Lemma \ref{lemSpaceConstantCurvature1} implies that 
\begin{align}  ( d_{ \mathbb{H}^n(\pm \curvabsbnd) }  (x,v_i))^2 & = | x(v_r) - v_i(v_r) |^2 + E_{(x,v_i)
, \pm \curvabsbnd}  , \nonumber
\end{align}
with $\exp_{v_r}^{-1}(x)= x(v_r)$ as usual, $E_{(x,v_i) , \pm \curvabsbnd}$ an error term satisfying the
bound $|E_{(x,v_i) , \pm \curvabsbnd}| < 5 \curvabsbnd  (2 \distmax
)^4 $, provided 
\begin{align}
|v_i(v_r) | , |x(v_r) |  & \leq \distmax & \text{ and } 
& & \sqrt{\curvabsbnd}  \distmax  < \frac{1}{2}. 
\nonumber
\end{align} Here the radius of the geodesic ball $\distmax$ is the maximum distance $d_{\textrm{max}}$ in the spaces of constant curvature $\mathbb{H}^n(\pm \curvabsbnd)$, introduced in Lemmas \ref{lemSpaceConstantCurvature1} and \ref{lemSpaceConstantCurvature2}. 
Because $|E_{(x,v_i) , \pm \curvabsbnd}| < 5 \curvabsbnd  (2 \distmax
)^4 $ we conclude that 
\begin{align}
 d_\man(x,v_i)^2
 = | x(v_r) - v_i(v_r) |^2
+ E_{(x,v_i)}. \nonumber \\
d_\man(v_l,v_k)^2= | v_l(v_r) - v_k(v_r) |^2 + E_{(v_l,v_k)} ,
\label{distanceBounds1}
\end{align}
with $|E_{(v_l,v_k)}| ,|E_{(0,v_i)}|< 5 \curvabsbnd ( 2 \distmax )^4 $.

\begin{figure}[!htb]
    \centerline{
\mbox{\includegraphics[height=2.2in]{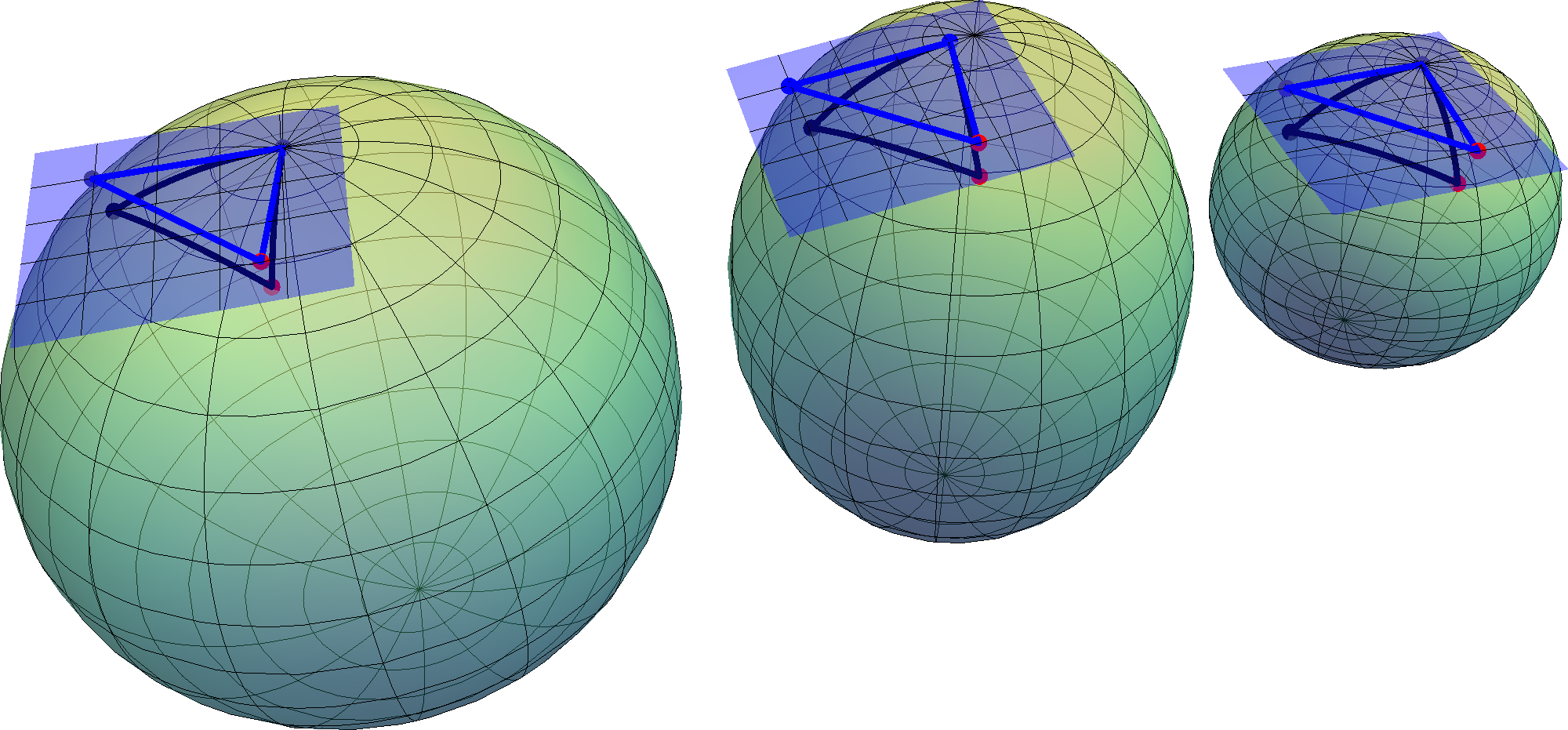}}
    }
\caption{A symbolic sketch of the procedure: the lengths of edges
and angles between geodesics in a manifold of arbitrary curvature
(symbolized by the ellipsiod in the centre) are approximated by those
in the spaces of constant curvature (the two spheres). Here in turn
the triangles are approximated by the Euclidean simplex in `the
tangent space'.  } \label{Toponogov2a}
\end{figure}

At this point we know all the lengths of the geodesics between the
points $x, v_0,\ldots, v_n$ in the manifold up to a small and
explicit deviation term, where the deviation is from the Euclidean space or $T_{v_r}M$ in which $x(v_r)$ and $\sigma^{\mathbb{E}}(v_r)$ lie. Any three points from the set $\{x, v_0,\ldots, v_n\}$ together with the
geodesics connecting them can be regarded as a geodesic triangle.
For a geodesic triangle of which we know all edge lengths the
Toponogov comparison theorem gives bounds on the angles in terms of
the Alexandrov triangles in the spaces $\mathbb{H}^n (\curvabsbnd)$
and $\mathbb{H}^n (-\curvabsbnd)$. Let us denote by $\theta_{il}^{\mathbb{H}^n}$ the angle
$\angle v_i x v_l$ between the geodesics in $\mathbb{H}^n(\pm
\curvabsbnd) $ and let $\theta_{il}^{\mathbb{E}}$ denote the angle
$\angle v_i(v_r) x(v_r) v_l(v_r) $ in $T_{v_r}M$, which we may regard as
Euclidean space. If we have a lower bound on the geodesic edge lengths in the simplex as well as on the distance between $x$ and the vertices under consideration, Lemma \ref{lemSpaceConstantCurvature1} in turn gives us bounds on the angles in $\mathbb{H}^n (\curvabsbnd)$
and $\mathbb{H}^n (-\curvabsbnd)$ compared to the corresponding angle in Euclidean space. 
To be precise 
\begin{align} 
d_{\mathbb{E}} (p,q) > \curvabsbnd^{1/4} (d_{\textrm{max}})^{3/2}, \label{LowerBoundDist2}
\end{align}
with $p,q \in \{ x(v_r), v_i(v_r),v_l (v_r) \mid i \neq j , l \neq j \}$, $p \neq q$, then the distance bounds \eqref{distanceBounds1} in $\mathbb{H}^d (\curvabsbnd)$ imply
\begin{align}
  |\cos \theta_{il}^{\mathbb{H}^n} - \cos \theta_{il}^{\mathbb{E}} |
  \leq 80  \sqrt{\curvabsbnd} d_{\textrm{max}} .  \label{angleBoundsS}
\end{align}

Formula \eqref{angleBoundsS} holds for the
upper and lower bounds that appear in the Toponogov comparison
theorem. Thus
\begin{align} 
  |\cos \theta_{il} - \cos
  \theta_{il}^{\mathbb{E}} | \leq 160  \sqrt{\curvabsbnd} \distmax  ,
\nonumber  
\end{align} 
where $\theta_{il}$ denotes the angle
$\angle v_i x v_l$ between the geodesics in the manifold, assuming
that the conditions above are satisfied. A sufficient condition on the simplex $\sigma^{\mathbb{E}}(v_r)$ for \eqref{LowerBoundDist2} to be satisfied (for some choice of $j$) is that for all $j$ the altitude $\textrm{Alt}_j$
\begin{align} 
\textrm{Alt}_j (\sigma^{\mathbb{E}}(v_r))> n  \curvabsbnd^{1/4} (\distmax )^{3/2} , \nonumber 
\end{align}
which can be weakened\footnote{by which we mean that following inequality implies the previous. } to
\begin{align} 
\frac{\textrm{Alt}_j (\sigma^{\mathbb{E}}(v_r))}{L(\sigma^{\mathbb{E}}(v_r) )} > n  \curvabsbnd^{1/4} ( \distmax )^{1/2}, \label{altcondition}
\end{align}
with $L(\sigma^{\mathbb{E}}(v_r) )$ the longest edge length of $\sigma^{\mathbb{E}}(v_r) $.   

Using reduced Gram matrices and the estimates by Friedland we now see:
\begin{align}  
|\det (\cos \theta_{il} )_j| \geq |\det (\cos \theta_{il}^{\mathbb{E}} )_j| - 160 n   \sqrt{\curvabsbnd}  \distmax ,
\nonumber
\end{align} 
with $(\cos \theta_{il} )_j$ and $(\cos \theta_{il}^{\mathbb{E}} )_j$ the matrix cosines of angles between the tangents of geodesics emanating from $x$ to $v_0, \ldots, v_{j-1}, v_{j+1}, \ldots ,v_n$ and corresponding cosines for $\sigma^{\mathbb{E}}(v_r)$, 
which is equivalent to, using \eqref{ToConvexHull},
\begin{align}  
|\det (\cos \theta_{il} )_j| & \geq \frac{\det(v_0(v_r)-x(v_r), \ldots ,v_{j-1}(v_r)-x(v_r), v_{j+1}(v_r)-x(v_r), \ldots, v_{n}(v_r)-x(v_r)  )^2 }{ |v_0(v_r)-x(v_r)|^2 \cdot  \ldots  \cdot |v_{j-1}(v_r)-x(v_r)|^2 \cdot | v_{j+1}(v_r)-x(v_r)|^2 \cdot  \ldots \cdot |v_{n}(v_r)-x(v_r) |^2 }   \nonumber\\ & -160 n    \sqrt{\curvabsbnd} \distmax , \nonumber
\end{align}
Lemma \ref{ConsequencesLinearIndependence} states that we have non-degeneracy of the simplex if for any $x$ in $B(v_r, \distmax) $ we have that $|\det (\cos \theta_{il} )_j| >0$ for some $j$, this means that if 
\begin{align}
\min_{x \in B(v_r,  \distmax )  } \max_{j \in \{ 0 , \ldots ,n\} }
\frac{\det(v_0(v_r)-x(v_r), \ldots ,v_{j-1}(v_r)-x(v_r), v_{j+1}(v_r)-x(v_r), \ldots, v_{n}(v_r)-x(v_r)  )^2 }{ |v_0(v_r)-x(v_r)|^2 \cdot  \ldots  \cdot |v_{j-1}(v_r)-x(v_r)|^2 \cdot | v_{j+1}(v_r)-x(v_r)|^2 \cdot  \ldots\cdot |v_{n}(v_r)-x(v_r) |^2 }   \nonumber\\  >160 n   \sqrt{\curvabsbnd}  \distmax  , \nonumber
\end{align}
non-degeneracy is established. This can be simplified using that $| v_{j+1}(v_r)-x(v_r)| \leq 2 \distmax$ and remarking that the mimimum of 
\begin{align}
\max_{j \in \{ 0 , \ldots ,n\} }
\det(v_0(v_r)-x(v_r), \ldots ,v_{j-1}(v_r)-x(v_r), v_{j+1}(v_r)-x(v_r), \ldots, v_{n}(v_r)-x(v_r)  )
\nonumber
\end{align} 
is attained in the barycenter and equals $(n-1 )! \textrm{vol} (\sigma^{\mathbb{E}}(v_r) )$. 

This means that we now have the condition for non-degeneracy
\begin{align} 
  \left(\frac{ (n-1)! \textrm{vol} (\sigma^{\mathbb{E}} (v_r ) ) }{ 
(2\distmax)^n } \right)^2  > 160 n \sqrt{\curvabsbnd} \distmax, \tag{\ref{QualityCrit}}
\end{align}
assuming that also \eqref{altcondition} is satisfied. Using lemma \ref{prop:alt.non.degen} we can prove that Equation \eqref{QualityCrit} cannot be
satisfied when \eqref{altcondition} is violated. Firstly note that 
\begin{align} 
\frac{\textrm{vol} (\sigma^{\mathbb{E}} (v_r ) ) }{ (2\distmax)^n } 
\leq  
\frac{\textrm{vol} (\sigma^{\mathbb{E}} (v_r ) ) }{ (L(\sigma^{\mathbb{E}} (v_r ) ))^n }
= \Theta (\sigma^{\mathbb{E}} (v_r ) ) 
\leq \frac{t}{(n-1)!}
= \frac{\min_j \textrm{Alt}_j (\sigma^{\mathbb{E}}(v_r))}{ n! L(\sigma^{\mathbb{E}}(v_r) )}, \nonumber 
\end{align} 
so that \eqref{QualityCrit} yields
\begin{align} 
\frac{\min_j \textrm{Alt}_j (\sigma^{\mathbb{E}}(v_r))}{  L(\sigma^{\mathbb{E}}(v_r) )}>  \sqrt{160} n^{3/2} \curvabsbnd^{1/4} \distmax^{1/2}, \nonumber 
\end{align} which implies \eqref{altcondition}.

We can now summarize 
\begin{thm} \label{ConditionSimplices}
Let $v_0, \ldots ,v_n$ be a set of vertices lying in a Riemannian manifold $M$,
whose sectional curvatures are bounded in absolute value by
$\curvabsbnd$, within a convex geodesic ball of radius $\distmax$ centred at one of the vertices ($v_r$) and such that $\sqrt{\curvabsbnd} \distmax <1/2$. If $\sigma^{\mathbb{E}} (v_r )$, the convex hull of $(\exp^{-1}_{v_r}(v_i))_{i=0}^{n}= (v_i(v_r))_{i=0}^{n}$,
satisfies
\begin{align} 
\left(\frac{ (n-1)! \textrm{vol} (\sigma^{\mathbb{E}} (v_r ) ) }{ 
(2\distmax)^n } \right)^2  > 160 n \sqrt{\curvabsbnd} \distmax, \label{QualityCrit}
\end{align}
then the Riemannian simplex with vertices $v_0, \ldots , v_n$ is non-degenerate, that is diffeomorphic to the standard $n$-simplex. 
\end{thm}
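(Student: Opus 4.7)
The plan is to certify non-degeneracy via Lemma~\ref{ConsequencesLinearIndependence}: at every $x\in B(v_r,\distmax)$ (which, by Remark~\ref{RemarkNeighbourhood}, contains $\sigma_M$) I will exhibit an index $j$ for which the vectors $\{v_i(x)\}_{i\neq j}\subset T_xM$ are linearly independent. Independence will be detected through the reduced Gram determinant $\det(\cos\theta_{il})_j$, where $\theta_{il}=\angle v_i x v_l$: by~\eqref{ToConvexHull} this quantity is positive iff the vectors are independent. The strategy is to compare it to its Euclidean analogue $\det(\cos\theta_{il}^{\mathbb{E}})_j$, with $\theta_{il}^{\mathbb{E}}$ the angle at $x(v_r)$ between $v_i(v_r)-x(v_r)$ and $v_l(v_r)-x(v_r)$ in $T_{v_r}M$, through a Toponogov-then-Friedland chain.

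First I would bound $|\cos\theta_{il}-\cos\theta_{il}^{\mathbb{E}}|$ by passing through $\mathbb{H}^n(\pm\curvabsbnd)$. The Toponogov theorem (Theorem~\ref{LowerBoundTCT}) brackets $\theta_{il}$ between the Alexandrov angles of the hinge $v_i x v_l$ in the two model spaces. Lemma~\ref{lemSpaceConstantCurvature1} applied to the hinges at $v_r$ (on edges $v_rv_i,v_rv_l$) and at $x$ (on edges $xv_i,xv_l$) supplies the squared-length comparisons~\eqref{distanceBounds1} with error $O(\curvabsbnd\distmax^4)$. Feeding these lengths into Lemma~\ref{lemSpaceConstantCurvature2}, applied to the Alexandrov triangle on $\{x,v_i,v_l\}$ inside each model space, produces a cosine deviation of at most $80\sqrt{\curvabsbnd}\distmax$ per model, which Toponogov bracketing doubles to $160\sqrt{\curvabsbnd}\distmax$ as in~\eqref{angleBoundsS}.

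Second, Friedland's inequality~\eqref{formulaFriedland} applied to the perturbation $E=(\cos\theta_{il})_j-(\cos\theta_{il}^{\mathbb{E}})_j$, whose entries are uniformly bounded by the above, gives
\begin{equation*}
|\det(\cos\theta_{il})_j|\;\geq\;|\det(\cos\theta_{il}^{\mathbb{E}})_j|\;-\;160n\sqrt{\curvabsbnd}\distmax.
\end{equation*}
By~\eqref{ToConvexHull} the Euclidean determinant equals the squared volume of the parallelepiped on $\{v_i(v_r)-x(v_r)\}_{i\neq j}$ divided by the product of their squared norms (each at most $(2\distmax)^2$). The parallelepiped volume is $n!$ times the (signed) volume of the sub-simplex $\{x\}\cup\{v_i\}_{i\neq j}$; using the barycentric decomposition of $\sigma^{\mathbb{E}}(v_r)$ from $x$, the maximum over $j$ of these volumes, minimised over $x$, is attained at the barycenter and delivers a lower bound proportional to $\bigl((n-1)!\,\textrm{vol}(\sigma^{\mathbb{E}}(v_r))\bigr)^2/(2\distmax)^{2n}$. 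Combining with the Friedland inequality, positivity of $|\det(\cos\theta_{il})_j|$ for some $j$ is secured exactly by~\eqref{QualityCrit}.

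The hard part will be the conditional nature of the angle estimate: Lemma~\ref{lemSpaceConstantCurvature2} requires the edge-length lower bound~\eqref{LowerBoundDist2}, equivalent to the altitude condition~\eqref{altcondition}, which is not assumed in the theorem. I will close the argument by verifying that~\eqref{QualityCrit} already forces~\eqref{altcondition}, via the chain $\textrm{vol}(\sigma^{\mathbb{E}}(v_r))/(2\distmax)^n \leq \Theta(\sigma^{\mathbb{E}}(v_r)) \leq \min_j\textrm{Alt}_j/(n!\,L(\sigma^{\mathbb{E}}(v_r)))$ supplied by the thickness--fatness comparison of Appendix~\ref{sec:phat}; substituting this into~\eqref{QualityCrit} recovers~\eqref{altcondition} with room to spare. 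A secondary subtlety is to extend the barycentric-volume identity to $x\in B(v_r,\distmax)$ that may lie outside $\sigma^{\mathbb{E}}(v_r)$, which I would handle via signed parallelepiped volumes.
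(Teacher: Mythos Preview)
Your proposal is correct and follows essentially the same approach as the paper: the Toponogov--Alexandrov bracketing combined with Lemmas~\ref{lemSpaceConstantCurvature1} and~\ref{lemSpaceConstantCurvature2} to control $|\cos\theta_{il}-\cos\theta_{il}^{\mathbb E}|$, then Friedland's bound~\eqref{formulaFriedland} on the reduced Gram determinant, then the barycentric volume identity to pass to $(n-1)!\,\mathrm{vol}(\sigma^{\mathbb E}(v_r))$, and finally the verification that~\eqref{QualityCrit} forces the altitude condition~\eqref{altcondition} via the fatness--thickness comparison of Appendix~\ref{sec:phat}. You have also correctly flagged the two places where the paper is terse (the side condition~\eqref{LowerBoundDist2} for Lemma~\ref{lemSpaceConstantCurvature2}, and the barycenter minimisation when $x$ may lie outside $\sigma^{\mathbb E}(v_r)$).
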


\begin{remark}
Equation \eqref{QualityCrit} can be weakened to 
\begin{align} \left(\frac{ (n-1)! \Theta (\sigma^{\mathbb{E}} (v_r ) ) }{ 
2  ^n } \right)^2  =
\left(\frac{ (n-1)! \textrm{vol} (\sigma^{\mathbb{E}} (v_r ) ) }{ 
(2 L( \sigma^{\mathbb{E}} (v_r ) ) )^n } \right)^2  > 160 n \sqrt{\curvabsbnd} \distmax, \nonumber 
\end{align}
or 
\begin{align}
\left(\frac{ (n-1)! \textrm{vol} (\sigma^{\mathbb{E}} (v_r ) ) }{ 
(2 L( \sigma^{M} ) )^n } \right)^2  > 160 n \sqrt{\curvabsbnd} \distmax, \nonumber
\end{align}
with $L( \sigma^{\mathbb{E}} (v_r ) )=\max_{i,j} d_{\mathbb{E} } (v_i (v_r) , v_j(v_r) )$, $L( \sigma^{M} )=\max_{ij} d_M (v_i,v_j)$ and $\Theta$ the fatness. 
\end{remark}

\paragraph{Acknowledgements}

We thank Stefan von Deylen for pointing out the work of Sander~\cite{Sander1}, and for stimulating discussions. We have also benefited from discussions with Arijit Ghosh.

This research has been partially supported by the 7th Framework Programme for Research of the European Commission, under FET-Open grant number 255827 (CGL Computational Geometry Learning).

\phantomsection
\bibliographystyle{alpha}
\addcontentsline{toc}{section}{Bibliography}
\bibliography{geomrefs}

\end{document}